  \newcounter{constant}
\def\arraypar#1{\parbox[c]{\textwidth - 2cm}{\centering #1}}
\newcommand{\I}{\mathds{1}}
\newcommand{\Ber}{\mathop{\mathrm{Ber}}\nolimits}
\newcommand{\Bin}{\mathop{\mathrm{Bin}}\nolimits}
\newcommand{\Cov}{\mathop{\mathrm{Cov}}\nolimits}
\newcommand{\Exp}{\mathop{\mathrm{Exp}}\nolimits}
\newcommand{\norm}[1]{\left\lVert#1\right\rVert}
\newcommand{\tS}{\smash{\tilde{S}}}
\newcommand{\tT}{\smash{\tilde{T}}}
\newcommand{\sB}{\smash{\mathsf{B}}}
\newcommand{\hH}{\smash{\hat{H}}}
\newcommand{\hP}{\smash{\hat{\PP}}}
\newcommand{\comp}{\mathsf{c}}
\newcommand{\cB}{\ensuremath{\mathcal{B}}}
\newcommand{\cC}{\ensuremath{\mathcal{C}}}
\newcommand{\cE}{\ensuremath{\mathcal{E}}}
\newcommand{\cF}{\ensuremath{\mathcal{F}}}
\newcommand{\cH}{\ensuremath{\mathcal{H}}}
\newcommand{\cI}{\ensuremath{\mathcal{I}}}
\newcommand{\cN}{\ensuremath{\mathcal{N}}}
\newcommand{\cO}{\ensuremath{\mathcal{O}}}
\newcommand{\cP}{\ensuremath{\mathcal{P}}}
\newcommand{\cR}{\ensuremath{\mathcal{R}}}
\newcommand{\cT}{\ensuremath{\mathcal{T}}}
\newcommand{\cX}{\ensuremath{\mathcal{X}}}
\newcommand{\EE}{\ensuremath{\mathbb{E}}}
\newcommand{\NN}{\ensuremath{\mathbb{N}}}
\newcommand{\PP}{\ensuremath{\mathbb{P}}}
\newcommand{\RR}{\ensuremath{\mathbb{R}}}
\newcommand{\ZZ}{\ensuremath{\mathbb{Z}}}
\theoremstyle{plain}
\newtheorem{teo}{Theorem}[section]
\newtheorem{prop}[teo]{Proposition}
\newtheorem{lema}[teo]{Lemma}
\newtheorem{coro}[teo]{Corollary}
\newtheorem{claim}[teo]{Claim}
\newtheorem{defi}[teo]{Definition}
\theoremstyle{remark}
\newtheorem{remark}{Remark}[section]
\title{Results on the contact process with dynamic edges or under renewals}
\author{Marcelo Hil\'ario\footnote{ICEx, Universidade Federal de Minas Gerais,
Belo Horizonte, Brazil.
E-mail: mhilario@mat.ufmg.com},
Daniel Ungaretti\footnote{IME, Universidade de S\~ao Paulo, S\~ao Paulo, Brazil.
E-mail: danielungaretti@gmail.com},
Daniel Valesin\footnote{Bernoulli Institute, University of Groningen,
Nijenborgh 9 9747 AG Groningen. Email: d.rodrigues.valesin@rug.nl},
Maria Eul\'alia Vares\footnote{Instituto de Matem\'atica, Universidade Federal
do Rio de Janeiro, RJ, Brazil. \!Email: eulalia@im.ufrj.br}}
\begin{document}
\maketitle

\begin{abstract}
We analyze variants of the contact process that are built by modifying the
percolative structure given by the graphical construction and develop
a robust renormalization argument for proving extinction in such models.
With this method, we obtain results on the phase diagram of two models:
the Contact Process on Dynamic Edges introduced by Linker and Remenik and a
generalization of the Renewal Contact Process introduced by Fontes, Marchetti,
Mountford and Vares.
\end{abstract}





\section{Introduction}
\label{sec:introduction}

The contact process was introduced by Harris~\cite{Har} as a Markov process
that models contact interactions on a lattice and has become one of the most
studied interacting particle systems ever since. It may alternatively be
defined in terms of a percolative structure usually referred to as a
\emph{graphical representation}~\cite{Har78}. In the classical setting, this is
done with the aid of infinitely many independent Poisson point processes (PPP)
which are suitably assigned to the sites and to the edges of the lattice. Using
the common interpretation of the contact process as a model for an infection,
the Poisson marks represent the space-time location where either a transmission
across an edge takes place or where a site is cured. Apart from providing an
appealing interpretation and allowing for a construction of the contact process,
the graphical representation is also an important tool in the study of the
process (and more generally, in other classes of interacting particle systems).
Notably, the proof by Bezuidenhout and Grimmett~\cite{BG} of the result that
the critical contact process dies out makes fundamental use of it.

By replacing the PPPs in the graphical representation by other types of point processes,
one is led to natural generalizations of the contact process. That is the case for the 
Contact Process on Dynamic Edges (CPDE)~\cite{LR} and the Renewal Contact Process (RCP)
\cite{FMMV,FGS,FMV,FMUV}. Even though the Markov property and other useful features like
the FKG inequality may no longer hold, the usual questions regarding survival or
extinction still make good sense and remain interesting from various aspects,
including the percolative perspective itself. 

In this paper, we call a \textit{Generalized 
Contact Process} (GCP) any process that is obtained from a percolative structure
of recovery and transmission marks in the same way as the contact process, but where the distribution
of these marks is given by some other point process.
Our contributions in the study of these processes are twofold. First, we develop
a robust renormalization approach that allows us to study the survival or
extinction for the GCP. Then we specialize to two  types of GCP
(the aforementioned CPDE and variants of the aforementioned RCP), and prove some results concerning their phase diagram.
We expect that our methods may be applicable in
greater generality, as for example, to allow the study of extinction for
contact processes in random environments with space-time correlations.
Next we provide a more detailed definition for the models to be considered and
present our main results.

\medskip
\noindent
\textbf{Generalized Contact Process.} Let $\cN_x$ and $\cN_{x,y}$ be point
processes on the line, indexed by the sites $x \in \ZZ^d$ and by the pairs of
nearest neighbors $x,y\in \ZZ^{d}$ respectively. Given an initial configuration
$\smash{\xi_0 \in \{0,1\}^{\ZZ^{d}}}$ we define a process $(\xi_t)_{t\geq 0}$
taking values in $\{0,1\}^{\mathbb{Z}^d}$ where $\xi_t(x) = 1$ (resp.
$\xi_t(x) =0$) is interpreted as $x$ being infected (resp.\ healthy) at time
$t$.  From the initial configuration the process evolves in time, with the
marks in $\cN_x$ and $\cN_{x,y}$ playing the roles of the instants of time when
$x$ may get cured, and when the infection may be transmitted across the edge
$xy$ respectively. More precisely, if $x, y \in \ZZ^{d}$ and $s < t$, we define
a path $\gamma$ from $(x,s)$ to $(y,t)$ as a c\`adl\`ag function $\gamma: [s,t]
\to \ZZ^{d}$ that fulfills the following properties:
\begin{itemize}
\item it does not contain any cure marks:
for every $u \in [s,t]$ $(\gamma(u), u) \notin \cN_{\gamma(u)}$;
\item its discontinuities have size one and only occur at transmission times:
    if $\gamma(u) \neq \gamma(u-)$ then $\gamma(u)$ and $\gamma(u-)$ are
    nearest neighbors in $\mathbb{Z}^d$ and $u \in \cN_{\gamma(u-), \gamma(u)}$.
\end{itemize}
The event that there is a path from $(x,s)$ to $(y,t)$ is denoted by
$(x,s) \rightsquigarrow (y,t)$. We define $\xi_t(y)$ as
\begin{equation*}
\text{$\xi_t(y) = 1$ if and only if we have
$(x,0) \rightsquigarrow (y,t)$ for some $x$ with $\xi_0(x)=1$}.
\end{equation*}
We may pick a set $A \subset \ZZ^{d}$ to be the set of initially infected
sites, by taking $\xi_0^{A} := \I_A$ in which case we write $\xi_t^{A}$ for
the resulting process. Identifying a configuration $\xi \in \{0,1\}^{\ZZ^{d}}$
with its set of infected sites $\{x \in \ZZ^{d};\; \xi(x) = 1\}$ allows us to
write $\xi^{A}_t = \cup_{x \in A} \xi^{\{x\}}_t$, a property known as
\textit{additivity}. For any starting configuration $A$, we define the
extinction time from $A$ as
\begin{equation*}
\tau^{A} := \inf\{t;\; \xi_t^{A} \equiv 0\}.
\end{equation*}
We say that the GCP dies out or that extinction occurs if $\tau^{\{0\}} <
\infty$, where $\{0\}$ represents the set containing only the origin.
On a vertex-transitive graph, additivity implies that if $\tau^{\{0\}} <\infty$
a.s.\ then the same holds for any finite starting set $A$.

In Section~\ref{sec:renormalization_scheme} we present a reformulation of the
renormalization approach used in~\cite{FMUV} that can be used for proving
extinction for the GCP under certain conditions.
This renormalization is applied to the CPDE and the RCP models. In both
models, $\cN_x$ and $\cN_{x,y}$ are independent renewal processes and to
emphasize this we denote them by $\cR_x$ and $\cR_{x,y}$. However, we remark
that the GCP includes more general point processes. By presenting the
construction in such generality we expect that it can be extended to other
settings like the contact processes on random environments containing
space-time correlations.

\medskip
\noindent
\textbf{Contact Process on Dynamic Edges.} Some of our results concern a model
of contact process on a dynamic random environment that was introduced
in~\cite{LR}.  

The environment is given by a dynamic percolation~\cite{Steif} on the edges of
the $\ZZ^d$ lattice. Initially each edge is independently declared open
with probability $p$ and closed otherwise. In the environment dynamics, each edge updates its
state to open or closed with respective rates $vp$ and $v(1-p)$. Hence the
environment is in equilibrium, given by the product of Bernoulli measures of
parameter $p$ and $v >0$ is the total rate at which edges update.
 
Conditional on the environment, the contact process evolves as following:
infected individuals try to transmit the infection to each  of its healthy
neighbors at a fixed rate $\lambda$ and heals at rate one. However, any
attempt for transmissions is only successful when it is allowed by the
environment, i.e.\ when it is done across an open edge. We write
$\mathbb{P}_{v,p,\lambda}$ for the joint law of the process and the environment
on some suitable probability space. More details about the model are
given in Section~\ref{sec:cpde}.

Monotonicity with respect to $\lambda$ allows us to define the critical
parameter
\begin{equation*}
\lambda_0(v,p)
    := \inf\{\lambda > 0;\;
        \PP_{v,p,\lambda} (\tau^{\{0\}} = \infty) > 0\}.
\end{equation*}
Let $p_c(d)$ be the critical point for Bernoulli bond percolation on $\ZZ^d$.
Our main result is the following:
\begin{teo}
\label{teo:extinction_cpde}
Consider the CPDE on $\ZZ^{d}$ with $d \ge 2$.
\begin{enumerate}[(i)]
\item For all $p < p_{c}(d)$ and $\lambda > 0$ there exists
    $v_0(p, \lambda, d)>0$ such that for any $v \in (0, v_0)$ the infection
    dies out almost surely.
\item For any~$p > p_c(d)$ we have
    \begin{equation}
    \label{eq_sup_vp}
    \sup\left\{\lambda_0(v,p'):v \ge 0,\; p' \in [p,1]\right\} < \infty.
    \end{equation}
\end{enumerate}
\end{teo}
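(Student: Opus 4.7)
For part (i), my plan is to apply the extinction renormalization scheme of Section~\ref{sec:renormalization_scheme} at a suitably chosen space-time scale. Fix $p < p_c(d)$ and $\lambda > 0$, choose a large $L$, and set $T_L = e^{cL}$ for a small $c > 0$ to be tuned. Let $B_L = [-L, L]^d \cap \ZZ^d$ and consider the good event $G_L$ requiring that (a) every open cluster of the initial Bernoulli$(p)$ edge configuration that meets $B_L$ has diameter at most $L/10$, and (b) no edge with an endpoint in $B_L$ flips during $[0, T_L]$. Subcritical sharpness of Bernoulli percolation gives $\PP(\text{(a) fails}) \le C L^d e^{-c' L}$, while a union bound over edges and their Poisson update clocks gives $\PP(\text{(b) fails}) \le C L^d v T_L$, which is small provided $v \le v_0(p, \lambda, d)$ with $v_0$ depending on $L$ (and hence on $\lambda$). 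On $G_L$ the environment in $B_L$ is frozen throughout $[0, T_L]$ and supports only clusters of diameter $\le L/10$, so the infection emitted from the origin is confined to $B_L$ and, being a continuous-time finite-state Markov chain with cure rate $1$ on a bounded region, dies out within $T_L$ with overwhelming probability. These ingredients feed directly into the renormalization scheme and yield $\tau^{\{0\}} < \infty$ almost surely.

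For part (ii), monotonicity in $p'$ (adding open edges only helps survival) reduces the problem to showing $\sup_{v \ge 0} \lambda_0(v, p) < \infty$, i.e., to finding one $\lambda^\ast$ that guarantees survival of the CPDE for every $v \ge 0$. The natural tool is a Bezuidenhout--Grimmett-style block argument: identify a space-time box $B_L \times [0, T_L]$ and a block event (a fully infected seed near the bottom produces a translated fully infected seed near the top) of probability at least $1 - \varepsilon$, uniformly in $v \ge 0$, and then compare the resulting renormalized process with supercritical oriented percolation to conclude $\PP_{v,p,\lambda^\ast}(\tau^{\{0\}} = \infty) > 0$.

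The main obstacle is precisely this uniformity in $v \in [0, \infty)$. I would handle the extremes first: for $v$ small the environment is essentially frozen on $[0, T_L]$, and one leverages the known survival of the contact process on the infinite cluster of supercritical Bernoulli percolation for $\lambda$ large, treating the rare updates as a perturbation; for $v$ large, edge states decorrelate on a time-scale much shorter than $T_L$, so the transmissions along any fixed edge that are actually delivered form (approximately) a Poisson process of rate $\lambda^\ast p$, which is safely supercritical once $\lambda^\ast p > \lambda_c(\ZZ^d)$. The intermediate regime I would attack by a continuity/compactness argument: by coupling the environments at nearby values of $v$ on the finite set of edges in $B_L$, the block probability depends continuously on $v$, so a uniform lower bound over any compact interval $[v_1, v_2]$ follows from pointwise bounds. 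Pasting the three regimes into a single uniform lower bound on the block probability is the most technically demanding step of the proof.
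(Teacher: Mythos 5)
Your sketch for part (i) captures the single-box estimate but omits the step where the real difficulty of the paper's proof lies: how the box events are decoupled so that the multiscale iteration closes. In the regime you consider, $v$ is tiny compared to the block time scale, so the environment essentially does not refresh between blocks; consequently, the event ``the configuration at the start of this block has only small clusters'' at one leaf of the hierarchy is strongly correlated with the same event at leaves occurring later in time (and conditionally on earlier block events the edge configuration is no longer Bernoulli($p$)). Saying the estimates ``feed directly into the renormalization scheme'' therefore skips the key idea. The paper resolves this by never conditioning on the environment state: it declares an edge $L_j$-\emph{available} if it opens during the leaf's time interval, or does not update in a buffer of length $(\beta/2-2)h_0$ before it, or updates in the buffer and is open at time $s_j$. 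This event is measurable with respect to the Poisson marks in a window around the leaf only, so the half-crossing events $\hH(L_j)$ at the well-separated leaves of an achievable hierarchy are genuinely independent, and the union bound $p\delta + e^{-(\beta/2-2)\delta} + p < \tfrac12(p+p_c)$ (with $v=\delta/h_0$, $\beta$ chosen via~\eqref{eq:beta_delta_choice}) keeps the availability percolation subcritical. Your good event (b) (``no edge flips during $[0,T_L]$'') together with condition (a) on the \emph{initial} configuration provides no analogue of this localization-in-time, so the product bound over the $2^k$ leaves that must beat the entropy $C^{2^k}$ in~\eqref{eq:hierarchy_estimate} is not available as stated. (A smaller point: you should control cluster \emph{size}, not just diameter, since the survival-time bound of Lemma~\ref{lema:extinction_cp} is exponential in the number of vertices.)

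For part (ii) your architecture (block argument, comparison with oriented percolation, monotonicity in $p'$) matches the paper's, but two of your three regimes are problematic as proposed. The intermediate-regime argument is circular: the ``pointwise bounds'' at each fixed $v$ that you would upgrade by continuity and compactness are not known a priori --- finiteness of $\lambda_0(v,p)$ for fixed $v$ and $p>p_c$ is part of what the theorem establishes --- so you would still have to carry out a fixed-$v$ block construction, which is exactly the paper's content. The paper in fact gets the bound uniformly on any compact interval $[0,\bar v]$ in one stroke: the update/recovery times inside the block form a Poisson process of intensity at most $C_d\bar v n^d$, so with high probability the gaps between consecutive such times are bounded below and their number is bounded above (Lemma~\ref{lem:in_between}), and taking $\lambda$ large lets every self-avoiding path in the block be traversed within each gap (Lemma~\ref{lem:fast_infection}); combined with the event that at \emph{every} time in $[t,t+1]$ the giant cluster of $B_n'(z)$ is the unique component touching all three sub-boxes (Lemmas~\ref{lem:first_exp_log} and~\ref{lem:integral}), this forces the infection to survive each environment update --- a persistence mechanism your block event does not address. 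For the large-$v$ regime, your ``delivered transmissions are approximately Poisson of rate $\lambda p$'' heuristic is essentially Theorem~2.3 of~\cite{LR} ($\lambda_0(v,\bar p)$ converges to a finite limit as $v\to\infty$), which the paper simply invokes to reduce~\eqref{eq_sup_vp2} to~\eqref{eq_sup_vp3}; if you do not cite it you must prove a quantitative homogenization statement uniform in large $v$ with the same fixed $\lambda^\ast$, which is substantial additional work.
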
 

Several questions concerning the CPDE were investigated in~\cite{LR}, mostly
related to the behavior of the model as a function of the parameters $v$, $p$
or $\lambda$. The results obtained there hold for any infinite
vertex-transitive regular graph, the only exception being Theorem~2.4 which
corresponds to item \emph{(i)} in Theorem~\ref{teo:extinction_cpde} for the
graph $\ZZ$ (with the obvious adaptation that $p_c(1)=1$). The proof there relies on
arguments that are essentially $(1+1)$-dimensional and does not seem to extend
easily to higher spatial dimensions. However, the authors conjectured that the
result should still hold beyond $\ZZ$ and item \emph{(i)} answers this
conjecture affirmatively for $\ZZ^d$, $d \ge 2$.

If $\bar{\lambda}$ stands for the critical parameter of the contact process on
the static lattice then a simple coupling shows that $\lambda_0(v,p) \ge
\bar{\lambda}$.  However, as shown in Corollary~2.8 in \cite{LR} there are
choices for $v$ and $p$ for which $\lambda_0(v,p) = \infty$.  In fact their
results imply that if
\begin{equation*}
p_1 := \sup\{p > 0;\; \text{there is $v>0$ with
    $\lambda_0(v,p) = \infty$}\}
\end{equation*}
then $p_1 \in (0,1)$, see Figure~\ref{fig:CPDE_phase_diagram}.
Theorem~\ref{teo:extinction_cpde}(ii) implies that $p_1 \le p_c$.

\begin{figure}[h]
\centering
\begin{tikzpicture}[scale=2.5]
    \coordinate (p1) at (0,.4) node[left] at (p1) {$p_1$};
    \coordinate (pc) at (0,.6) node[left] at (pc) {$p_c$};
    \draw[->] (0,-.1) -- (0,1.2) node[left] {$p$};
    \draw[->] (-.2, 0) -- (3.1, 0) node[below] {$v$};
    \draw[dashed] (0,1) node[left] {$1$} -- (3,1) node[right] {};
    \draw[dashed] (0,1) -- ++(3,0) (p1) -- ++(3,0) (pc) -- ++(3,0);
    \fill[blue, opacity=.7] (p1) plot [smooth] coordinates
    {(p1) (1,.2) (3, .1)} -- (3,0) -- (0,0) -- cycle;
    \node at (.2, .2) {$\mathfrak{I}$};

    \draw[thick,decorate,decoration={brace,amplitude=5pt}]
        (3.2,1) -- (3.2, .6) node[midway, right=6pt, align=center]
        {$\lambda_0(\cdot, p)$ bounded \\ (Theorem~\ref{teo:extinction_cpde}(ii))};
    \draw[thick,decorate,decoration={brace,amplitude=5pt}]
        (3.2,.6) -- (3.2, 0) node[midway, right=6pt, align=center]
        {$\lambda_0(\cdot, p)$ unbounded \\
        (Theorem~\ref{teo:extinction_cpde}(i))};
\end{tikzpicture}
\caption{Phase diagram for the behavior of $\lambda_0(v,p)$. The immunity
    region $\mathfrak{I}$ in blue is the set of points with $\lambda_0(v,p) =
    \infty$. It falls below the line $p=p_1$, and the critical value $p_1$ is
    shown to be at most $p_c$ by Theorem~\ref{teo:extinction_cpde}(ii).  For
    CPDE on $\ZZ$ it holds $p_1 < p_c = 1$, but it is not known if this holds
    in general.}
\label{fig:CPDE_phase_diagram}
\end{figure}
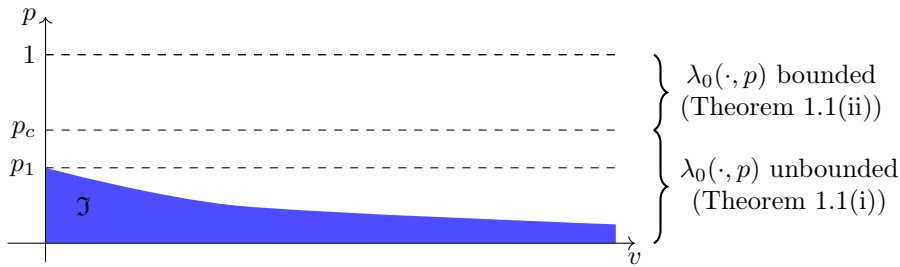

The proof for Theorem~\ref{teo:extinction_cpde}(ii) is based on a one-step
renormalization argument, while the proof for
Theorem~\ref{teo:extinction_cpde}(i) relies on the multiscale
renormalization construction presented in Section~2.

\medskip
\noindent
\textbf{Renewal Contact Process.}
The second model for which we apply the method in
Section~\ref{sec:renormalization_scheme} is a further generalization of the
Renewal Contact Process (RCP) that appeared in the series of papers~\cite{FMMV,
FMV, FMUV, FGS}.  There, the authors consider the GCP in which the
transmissions are governed by PPPs of rate $\lambda$ and the cures by renewal
processes whose interarrivals have a certain distribution $\mu$.  One can
define the critical parameter whose value depends on the specific choice of
distribution $\mu$ as:

\begin{equation*}
\lambda_c(\mu)
    := \inf\{\lambda > 0;\; \PP(\tau^{\{0\}} = \infty) > 0\}.
\end{equation*}
One of the goals for the investigation carried on in~\cite{FMMV, FMV, FMUV,
FGS}, was to relate the question whether $\lambda_c(\mu)$ is strictly positive
to the tail decay of $\mu$.

Analogously, if renewals with interarrival distribution $\nu$ were associated
to the transmissions while the cures were given by PPP's of rate $\delta$, then
one could define the critical parameter as
\begin{equation}
\label{eq:delta_c_old}
\delta_c(\nu)
    := \inf \{\delta > 0;\; \PP(\tau^{\{0\}} = \infty) = 0\}.
\end{equation}

The main novelty here is to allow for both the transmissions and the cures to
be determined by renewal processes. Their interarrival distributions will be
denoted $\mu$ and $\nu$, respectively. Let us denote $\PP_{\mu, \nu}$
the law of the resulting GCP on some suitable probability space.

A natural definition for the critical parameter is done by fixing one
interarrival distribution while scaling the other.  In fact, for any fixed
$\delta > 0$ let us define the distribution $\nu_{\delta}$ on $\RR_+$ given by
$\nu_{\delta}(t, \infty) := \nu(\delta t, \infty)$ for every $t\ge 0$.  Then,
we can define the critical parameter
\begin{equation}
\label{eq:delta_c_general}
\delta_c(\mu, \nu)
    := \inf \{\delta > 0;\; \PP_{\mu, \nu_{\delta}}(\tau^{\{0\}} = \infty) = 0\}.
\end{equation}
The definition in~\eqref{eq:delta_c_general} includes the one
in~\eqref{eq:delta_c_old} since $\delta_c(\mu, \Exp(1)) = \delta_c(\mu)$.
However, notice that monotonicity of $\PP_{\mu,
\nu_{\delta}}(\tau^{\{0\}} = \infty)$ in $\delta$ is not clear for general $\nu$.  A similar
definition, considering scalings for $\mu$, leads to the definition of
$\lambda_c(\mu, \nu)$.

We investigate the behavior of $\delta_c(\mu, \nu)$. To emphasize that we are
fixing the edge (or transmission) renewal processes and scaling the site (or
cure) processes, we adopt the more explicit name Edge Renewal Contact Process
(ERCP). Similarly, we write Site Renewal Contact Process (SRCP) in case we are
fixing the cure processes.

A first observation is that since we allow for transmissions that are given by general
renewal processes, it is not clear in principle whether the infection can spread to
infinitely many sites in finite time. In order to avoid such undesirable behavior, we only
study $\delta_c(\mu, \nu)$ for continuous $\mu$ and
$\nu$. Also, notice that if we suppress the cures in an ERCP, the
resulting model can be seen as a generalization of Richardson model,
which corresponds to the case $\mu$ is an exponential distribution.

Let us denote by $\cR$ the collection of renewal marks obtained from
some distribution $\mu$. Besides continuity, there are two hypotheses on
distributions that we use frequently.

The first one is a quantitative control on the renewal marks of a
heavy-tailed distribution: we say that $\mu$ satisfies condition~\eqref{eq:gap_t_epsilon}
if there exists $\epsilon_4 > 0$ and $t_0 > 0$ such that
\begin{equation}
\label{eq:gap_t_epsilon}
\PP(\cR \cap [t, t+t^{\epsilon_4}] \neq \emptyset)
    \le t^{-\epsilon_4} \quad \text{for $t \ge t_0$}.
    \tag{G}
\end{equation}
We interpret condition~\eqref{eq:gap_t_epsilon} as $\cR$ having
increasingly large gaps. It appeared already in~\cite{FMMV} and
in~\cite[Proposition~7]{FMMV} it is shown that~\eqref{eq:gap_t_epsilon}
holds whenever $\mu$ satisfies conditions A)-C) defined therein, which we
reproduce here:
\begin{enumerate}[A)]
\item There is $1 < M_1 < \infty$, $\epsilon_1>0$ and $t_1 > 0$ such that 
    \begin{equation*}
    \text{for every $t > t_1$,} \qquad
        \epsilon_1 \smash{\int_{[0,t]}} s \mu(\mathrm{d}s) < t \mu(t, M_1 t).
    \end{equation*}
\item There is $1 < M_2 < \infty$, $\epsilon_2>0$ and $r_2 > 0$ such that 
    \begin{equation*}
    \text{for every $r > r_2$,} \qquad
        \epsilon_2 \mu[M_2^{r}, M_2^{r+1}] \le \mu[M_2^{r+1}, M_2^{r+2}].
    \end{equation*}
\item There is $M_3 < \infty$, $\epsilon_3>0$ such that 
\begin{equation*}
    \text{for $t \ge M_3$,} \qquad
        t^{-(1-\epsilon_3)} \le \mu(t,\infty) \le t^{-\epsilon_3}.
    \end{equation*}
\end{enumerate}
Condition~C) controls the tail decay of $\mu$, while A) and B) concern its
regularity. Together, they provide a straightforward way to verify if $\mu$
satisfies~\eqref{eq:gap_t_epsilon}.
In particular, hypothesis~\eqref{eq:gap_t_epsilon} holds when
$\mu(t, \infty) = L(t)t^{-\alpha}$ with $\alpha \in (0,1)$, where~$L$ is a slowly 
varying function at infinity.

The second one is a moment condition that is behind a sufficiently fast decay
of correlations for events depending on $\cR$. In~\cite[Theorem~1.1]{FMUV} it
is proved that a sufficient condition for $\lambda_c(\mu) > 0$ in a SRCP($\mu$)
is the moment condition
\begin{equation}
\label{eq:moment_condition_rcp3}
\int_{1}^{\infty} x
    \exp\Bigl[ \theta (\ln x)^{1/2} \Bigr] \mu(\mathrm{d}x)
    < \infty
\quad \text{for some $\theta > \sqrt{(8 \ln 2) d}$}.
    \tag{M}
\end{equation}
Condition~\eqref{eq:moment_condition_rcp3} goes in the opposite
direction of~\eqref{eq:gap_t_epsilon}; it implies that it is hard to find
large intervals without renewal marks, see Lemma~\ref{lema:unif_estimates}(i).
Moreover, it is slightly stronger than finite first moment: for instance, if
$\mu$ has a finite $(1+\varepsilon)$-moment then it
satisfies~\eqref{eq:moment_condition_rcp3}.

Now, we are ready to state our results about ERCP. Define
\begin{equation*}
r_t := \max \{\norm{x}_1;\; (0,0) \rightsquigarrow (x,t)\
    \text{in ERCP without cures}\}.
\end{equation*}

We estimate the speed of infection of an ERCP without cures. This is based
on a comparison with a toy model of iterated percolation, see
Section~\ref{sub:growth_ercp}.
\begin{teo}
\label{teo:growth_ercp}
Let $d \ge 1$ and $\mu$ be a continuous distribution.
\begin{enumerate}[(i)]
\item For any $a > 1$ it holds
    $\varlimsup\limits_{t \to \infty} \dfrac{r_t}{t (\ln t)^{a}} = 0$, almost surely.

\item Suppose $\mu$ satisfies~\eqref{eq:gap_t_epsilon}. Then, the process $r_t$ has
    sublinear growth: for every $\rho \in (0, \epsilon_4)$ we have
\begin{equation}
    \label{eq:sublinear_ERCP_rcp1}
    \varlimsup_{t \to \infty} \frac{r_t}{t^{1-\rho}} = 0
    \qquad \text{almost surely}.
\end{equation}
\end{enumerate}
\end{teo}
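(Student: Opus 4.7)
My plan is to bound $r_t$ above by a first-passage-type quantity in an iterated percolation on $\ZZ^d$, extract linear (part (i)) or sublinear (part (ii)) first-moment bounds from this comparison, and conclude almost-sure statements via Markov plus Borel-Cantelli along geometric subsequences, using the monotonicity $r_t \le r_{t'}$ for $t \le t'$. The starting observation is that if $(0,0) \rightsquigarrow (x,t)$, a shortcut argument (any revisit of a spatial vertex can be cut, giving an earlier arrival at $x$) produces a spatially self-avoiding path $\gamma = (v_0=0, \ldots, v_n=x)$ with arrival time $\tau^\gamma \le t$, where $\tau^\gamma$ is defined recursively via the first mark of $\cR_{v_{i-1}v_i}$ posterior to the previous arrival. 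Self-avoidance in space ensures that the $n$ renewal processes $\cR_{v_{i-1}v_i}$ are mutually independent, which is the key input for all probabilistic estimates that follow.

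For (i), I would split $[0,t]$ into consecutive blocks $I_k = [(k-1)s, ks]$ of length $s>0$ and define the edge configuration $G_k = \{e : \cR_e \cap I_k \ne \emptyset\}$. Because renewal processes on distinct edges are independent, for each fixed $k$ the configuration $G_k$ is a Bernoulli percolation on $\ZZ^d$ with edge-probability $q_k := \PP(\cR \cap I_k \ne \emptyset)$. The monotone inequality $r_{ks}\le r_{(k-1)s}+R_k$, where $R_k$ is the chemical radius in $G_k$ from the boundary of the currently infected set, reduces matters to controlling $R_k$. Continuity of $\mu$ makes $q_1 = \mu([0,s])$ arbitrarily small, and standard renewal-theoretic estimates (subadditivity of the renewal function together with a fresh-start domination at each block boundary) allow us to bound $q_k$ uniformly in $k$ by the same quantity up to a constant; choosing $s$ so that all $q_k$ sit below $p_c(\ZZ^d)$ places us in the subcritical percolation regime and gives $R_k$ an exponential tail uniform in $k$. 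Summation yields $\EE[r_t] \le Ct$, and Markov's inequality along the geometric subsequence $t_m=(1+\varepsilon)^m$ gives $\PP(r_{t_m}>t_m(\ln t_m)^{a'}) \le C(\ln t_m)^{-a'}$, summable for any $a'>1$; Borel-Cantelli plus monotonicity then produces $\limsup r_t/(t(\ln t)^{a'})\le 1+\varepsilon$, which combined with $(\ln t)^{a'-a}\to 0$ for $a > a'$ and $\varepsilon \downarrow 0$ along a countable sequence yields (i).

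For (ii), I would refine the block scheme dyadically: inside each block $[2^j, 2^{j+1}]$ cover time by sub-intervals of length $2^{j\epsilon_4}$; by \eqref{eq:gap_t_epsilon}, each such sub-interval contains a renewal of a given edge with probability at most $2^{-j\epsilon_4}$, and each such renewal can extend the infection by at most one lattice step along a fastest path. The growth over a single dyadic block is therefore stochastically dominated by a sum of small-probability indicators along paths of length $k$, yielding, after bookkeeping the number of relevant paths against the per-path probability $(2^{j(1-2\epsilon_4)})^k$, a bound $\EE[r_{2^m}] \le C\, 2^{m(1-\rho')}$ for any $\rho' < \epsilon_4$. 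Markov (with $\lambda = \varepsilon\, 2^{m(1-\rho)}$ for $\rho < \rho'$) and Borel-Cantelli along $t_m = 2^m$ then yield \eqref{eq:sublinear_ERCP_rcp1}. The main obstacle I anticipate is precisely the uniform-in-$k$ control of $q_k$ needed in (i): for general continuous $\mu$ (possibly with infinite mean or unbounded density) renewal-function estimates can be delicate, and I expect this is resolved through a stochastic-domination lemma on residual-waiting-time distributions; in (ii) the analogous uniform control is supplied directly by \eqref{eq:gap_t_epsilon}, which is precisely why the sharper sublinear rate is accessible there.
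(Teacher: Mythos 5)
Your skeleton (compare the cure-free ERCP with percolation over time windows, then Borel--Cantelli) is the right one, but two of its pillars are missing or wrongly justified, and they are exactly the points the paper has to work for. First, the dependence between the past infection and the current window. Your blocks $I_k$ reuse every edge: each $G_k$ is indeed product Bernoulli \emph{marginally}, but $G_k$ is not independent of the infected set at time $(k-1)s$, since both are functions of the same renewal processes. Hence the ``exponential tail uniform in $k$'' for $R_k$, the radius grown \emph{from the current infected set}, does not follow from subcritical percolation, and the ``fresh-start domination at each block boundary'' you invoke is precisely what fails for a general continuous $\mu$: conditionally on the age of an edge's process at the block boundary, the probability of a mark in the next window of length $s$ can be arbitrarily close to $1$ (unbounded hazard), so there is no uniform domination of residual times. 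The paper removes this obstruction structurally, by coupling with an \emph{iterated} percolation via an exploration that examines each edge at most once, so only fresh (hence independent) edges are ever tested; and the unconditional uniform estimate $\sup_{t}\PP(\cR\cap[t,t+w]\neq\emptyset)\le\varepsilon$ is itself a lemma (Lemma~\ref{lema:unif_estimates}(ii)), proved through uniform continuity of the renewal function via Blackwell's renewal theorem --- not through subadditivity/fresh-start bounds, which only give $U(t+s)-U(t)\le 1+U(s)$ and are useless here. Even if you repair the chaining by a union bound over a deterministic ball (the viable fix, and essentially what Proposition~\ref{prop:linear_Rn} does with a conditional Borel--Cantelli), your route yields increments of order $\log k$ per block, i.e.\ $r_t=O(t\log t)$ and at best $\EE r_t=O(t\log t)$, not the asserted $\EE[r_t]\le Ct$; the theorem for all $a>1$ then has to come from the almost-sure recursion, not from Markov applied to a linear mean as you wrote. (Your opening loop-erasure observation could actually carry a cleaner proof of (i): for a self-avoiding path of length $n$ the ordered-marks probability is at most $U(t)^n/n!$, giving linear growth directly; but you did not pursue it.)

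For (ii) the bookkeeping is not correct as stated. The claim that each renewal ``extends the infection by at most one lattice step'' per sub-interval is false (several adjacent edges can fire at increasing times inside one sub-interval of length $2^{j\epsilon_4}$), and the quantity $(2^{j(1-2\epsilon_4)})^{k}$ is not a probability once $\epsilon_4\le 1/2$ --- which is the typical situation, since \eqref{eq:gap_t_epsilon} only provides some small $\epsilon_4$ --- so it cannot deliver $\EE[r_{2^m}]\le C2^{m(1-\rho')}$. A correct count must either reinstate the ordered structure (the $k$ traversal times are assigned to the $2^{j(1-\epsilon_4)}$ sub-intervals in non-decreasing order, contributing a binomial coefficient or $1/k!$ that your bound drops), or do what the paper does: run the one-look-per-edge percolation comparison along the times $t_{n+1}=t_n+t_n^{\epsilon_4}$, note that the number of percolation steps up to time $s$ is $O(s^{1-\epsilon_4})$, and feed this into the $O(n(\ln n)^a)$ growth bound of Proposition~\ref{prop:linear_Rn}; in either version the random location of the infected set entering each block must be handled (again by induction against a deterministic ball), which your sketch leaves untreated. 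So the approach is in the paper's spirit but, as written, both parts have genuine gaps: the uniform window estimate is asserted with an inadequate justification, the conditional domination it is meant to replace does not exist, and the combinatorial estimate behind the sublinear rate in (ii) does not close.
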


Our main contribution in the investigation of ERCP is a set of conditions on
$\mu$ and $\nu$ under which we can show whether $\delta_c(\mu,\nu)$ is trivial
or not.  Heuristically, when $\mu$ is a heavy-tailed distribution,
Theorem~\ref{teo:growth_ercp} shows that the speed of the infection in an
environment without cures is slow and any rate $\delta > 0$ of cure is
sufficient for it to die out. On the other hand, when $\mu$ and $\nu$ have a fast
tail decay one expects a non-trivial phase transition, similar to what is
observed in the standard Contact Process.
\begin{teo}
\label{teo:ercp_extinction}
Let $\mu, \nu$ be continuous interarrival distributions and consider a
ERCP($\mu, \nu$) in $\ZZ^{d}$.
\begin{enumerate}[(i)]
    \item If $d\ge 1$, $\mu$ satisfies~A)-C) and
    $\nu$ satisfies $\int x^{n} \nu(\mathrm{d}x) < \infty$ for all $n\ge 1$,
    then the ERCP($\mu, \nu_{\delta}$) dies out almost surely,
    for each $\delta>0$, i.e.\ , $\delta_c(\mu,\nu) = 0$.
\item If $d \ge 2$ and $\mu$ has finite first moment then $\delta_c(\mu, \nu) > 0$.
\item If $d \ge 1$ and $\mu$ and $\nu$ satisfy~\eqref{eq:moment_condition_rcp3},
    then $\PP_{\mu, \nu_{\delta}}(\tau^{\{0\}} = \infty) = 0$ for sufficiently large
    $\delta$, i.e.\ , $\delta_c(\mu,\nu) <\infty$.
\end{enumerate}
\end{teo}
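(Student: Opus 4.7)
The plan is to handle the three items by separate arguments. Items (i) and (iii) both aim for extinction, and I would derive them as consequences of the multiscale renormalization framework developed in Section~\ref{sec:renormalization_scheme}; item (ii) is a survival statement and I would instead obtain it by a block construction and a comparison with oriented Bernoulli percolation. The hardest step will be item (i), because of the delicate interplay between heavy-tailed transmissions (which can let the infection persist at a single site for a very long time) and the polynomial, not exponential, control on the cure gaps coming from the moment hypothesis on $\nu$.

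For item (i), the key input is Theorem~\ref{teo:growth_ercp}(ii): since A)--C) imply~\eqref{eq:gap_t_epsilon}, there exists $\rho > 0$ with $r_t / t^{1-\rho} \to 0$ almost surely. Thus, starting from a single infected site, the ``no-cure'' infection front at time $t$ lies within a ball of radius $o(t^{1-\rho})$. On the other hand, since $\nu$ has all moments, every gap of length $t^{\rho/2}$ in the renewal process with interarrival $\nu_\delta$ has probability decaying faster than any polynomial in $t$, by Markov. Working on a space-time box of the form $[-L,L]^d \times [0, L^{1/(1-\rho/2)}]$, one then checks that with probability close to one every potential infection path inside the box encounters a cure mark before leaving it. This provides the base-scale good event required to apply the renormalization of Section~\ref{sec:renormalization_scheme} and conclude $\tau^{\{0\}} < \infty$ for any $\delta > 0$.

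For item (ii), I would carry out a block construction in the spirit of Bezuidenhout--Grimmett. Fix $L, T$ large and consider the event that, starting from a single infected site in a small seed region, the infection spreads inside the space-time block $[-L,L]^d \times [0,T]$ to a macroscopic number of sites in disjoint target regions at time $T$. With $\mu$ of finite first mean, the transmission renewals occur with positive density, which for $d \geq 2$ is enough to reproduce the standard contact-process block event for the GCP with no cures. For $\delta$ small, the probability that any given site has a cure mark inside $[0,T]$ can be made arbitrarily small, so the block event with cures is close to the one without. Once this block event has probability sufficiently close to one, a Liggett--Schonmann--Stacey-type stochastic domination yields a comparison with supercritical oriented percolation on a coarse-grained lattice, and hence survival of the ERCP.

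For item (iii), I would again apply the renormalization of Section~\ref{sec:renormalization_scheme}. Under~\eqref{eq:moment_condition_rcp3} for $\mu$, the uniform estimates in Lemma~\ref{lema:unif_estimates}(i) constrain the number of transmission marks on any edge within a time interval, so the spatial spread inside the base box in the absence of cures is controlled. Meanwhile,~\eqref{eq:moment_condition_rcp3} for $\nu$, together with the scaling $\nu \mapsto \nu_\delta$ with $\delta$ large, makes cure marks dense on the timescale of the box: each site has a cure in every subinterval of length $O(1/\delta)$ with probability close to one. Combining these bounds, for $\delta$ sufficiently large the base good event has probability high enough to trigger the renormalization and yield extinction, i.e.\ $\delta_c(\mu, \nu) < \infty$.
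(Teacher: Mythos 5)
Your sketch of item (iii) is broadly the paper's route (recurrence-inequality renormalization, with condition \eqref{eq:moment_condition_rcp3} for both $\mu$ and $\nu$ providing the decoupling, and $\delta$ large making the base box good via an overshoot comparison), so the real issues are in items (i) and (ii). For (i) there is a genuine gap: knowing $r_t \le t^{1-\rho}$ and that cure marks are polynomially dense does \emph{not} by itself yield extinction, and the event you propose to check --- ``every potential infection path encounters a cure mark before leaving the box'' --- is not the right notion (paths avoid cure marks by definition; the infection survives by hopping between sites, so one needs a time window in which \emph{all} currently infected sites are cured while no transmission chain bridges the window). Producing such a window is the heart of the matter, and it uses more of A)--C) than the growth bound: one must show that the set of edges in the polynomially large box carrying a transmission mark during $[2^n,2^n+2^{n\epsilon_4}]$ only forms small clusters (a lattice-animal count against condition \eqref{eq:gap_t_epsilon}), that each such edge carries few marks (this is where the \emph{lower} bound in C) enters, via Lemma~3 of \cite{FMMV}), so that each cluster has a long common transmission-free subinterval, and finally that every site is cured inside that subinterval (all moments of $\nu$). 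The paper then concludes by Borel--Cantelli along the deterministic times $2^n$; it does \emph{not} invoke the Section~\ref{sec:renormalization_scheme} renormalization for this item, and it is doubtful your plan to do so can work, since under heavy-tailed $\mu$ one has no covariance/decoupling control with which to exploit Lemma~\ref{lema:cascading_half_cross} or the hierarchy bound \eqref{eq:hierarchy_estimate}.

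For (ii), a Bezuidenhout--Grimmett-style block construction runs into the non-Markovian nature of the renewals: to compare stacked blocks with oriented percolation you need the conditional probability of the block event, given the outcome of earlier blocks, to stay high, but conditioning on the past of a renewal process can leave an arbitrarily large residual time on an edge, and there is no uniform \emph{lower} bound on the probability of a transmission in a fixed window given an adversarial residual (Lemma~\ref{lema:unif_estimates} is uniform only over the initial delay, i.e.\ before the edge has been examined). This is exactly why the paper's proof, and the remark following the theorem, restrict to $d\ge 2$ and use each edge's transmission process (and each site's cure process) in a single, fresh space-time window: the infection is pushed along an oriented staircase in $\ZZ^2_+$, each step consuming one new edge, which yields a genuinely independent site-bond percolation to which LSS domination applies. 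As stated, your block scheme reuses the same spatial edges in successive time blocks and does not address this dependence, so the comparison with supercritical oriented percolation is not justified; repairing it essentially forces you into the paper's one-use-per-edge embedding.
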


\begin{remark}
The restriction to $d\ge 2$ in (ii) allows a simpler argument by using at most
once the transmission process at each given edge, therefore avoiding the
dependencies between various residual times. A suitable extension to $d=1$ is
expected.
\end{remark}

\noindent
\textbf{Related works.}
Let us now comment on some related works, apart from~\cite{FMMV,FGS,FMV,FMUV}
and~\cite{LR}, which were already mentioned.  There is a substantial
literature on the contact process on static random environments, that is,
versions of the contact process in which the recovery and transmission rates
may vary spatially, and are sampled from some environment distribution, but
the dynamics is still driven by Poisson point processes. Due to this last
point, these models are fundamentally different from the ones we consider, but
there are similarities in the line of investigation and the analysis.
Klein~\cite{Klein} considers an environment obtained  from  i.i.d. recovery
and transmission rates, and gives a condition on the environment distribution
that guarantees almost sure extinction; his method is a multi-scale
construction that has similarities to the one we employ. Newman and
Volchan~\cite{NV} consider a one-dimensional recovery environment (and
transmissions with constant rate~$\lambda$), and give a condition on the
environment distribution that guarantees survival regardless of~$\lambda$. See
also~\cite{Andjel, BDS, GM, Lig92}.

\medskip
\noindent
\textbf{Summary of the paper.}
This paper is organized as follows. In Section~\ref{sec:renormalization_scheme}
we develop the renormalization argument for a GCP. Section~\ref{sec:cpde}
contains the results for CPDE and Section~\ref{sec:ercp} contains the results
for ERCP.

\medskip
\noindent
\textbf{Acknowledgements.}
The research of MH was partially supported by CNPq grants `Projeto Universal'
(406659/2016-8) and `Produtividade em Pesquisa' (312227/2020-5) and by FAPEMIG
grant `Projeto Universal' (APQ-02971-17).
DU was supported by grant 2020/05555-4, S\~ao Paulo Research Foundation (FAPESP).
MEV was partially supported by CNPq grant 305075/2016-0 and FAPERJ CNE grant
E-26/202.636/2019.

\section{Renormalization Scheme for Generalized Contact Process}
\label{sec:renormalization_scheme}



As mentioned in the introduction, in order to prove extinction for GCP we
develop a version of the renormalization in~\cite{FMUV}:
\begin{itemize}
\item The previous construction was developed in the context of Renewal Contact
    Process. We highlight that it can be actually applied to the GCP in general and
    instead of considering a sequence of boxes $B_n = [0,2^{n}]^d \times [0, h_n]$
    we allow spatial dimensions to grow faster, which can be useful
    to decouple variations of the Contact Process with space correlations.

\item As already used in \cite{FMUV}, crossing events on different scales are
    related via the definition of a single event we call a
    \textit{half-crossing}. We now introduce the notion of a
    \textit{hierarchy} of boxes, reminiscent of the arguments
    from~\cite{Rath,Sznitman}. Whenever one has a half-crossing of a
    large scale box, the hierarchy encodes the structure of smaller scale
    boxes that are also half-crossed. This provides an alternative way of
    using this renormalization to prove extinction of the infection, and we
    apply it to CPDE in Section~\ref{sub:teo_cpde_i}.
\end{itemize}

The choice of scales depends on some parameters that need to be tuned in order
for the argument to work. This tuning depends on the specific point processes
we choose for the model.

\subsection{Main events}
\label{sub:main_events}

We begin recalling the definition of a general space-time crossing.
\begin{defi}[Crossing]
Given space-time regions $C, D, H \subset \ZZ^{d} \times \RR$ we say there
is a crossing from $C$ to $D$ in $H$ if there is a path
$\gamma:[s, t] \to \ZZ^{d}$ such that $\gamma(s) \in C$, $\gamma(t) \in D$
and for every $u \in [s,t]$ we have $(\gamma(u),u) \in H$.
\end{defi}

Given $a = (a_1, \ldots, a_d)$, and $b = (b_1, \ldots, b_d)$ with $a_i < b_i$
for every $i$, let $[a, b] = \prod_{i=1}^{d} [a_i, b_i]$ and consider the
space-time box $B := [a,b] \times [s, t]$ whose projection into the spatial
coordinates is $[a, b]$. For each $1 \le j \le d$ we denote by
\begin{equation*}
    \partial_{j}^{-}B := \{(x, u) \in B;\; x_j = a_j\}
    \quad \text{and} \quad
    \partial_{j}^{+}B := \{(x, u) \in B;\; x_j = b_j\}
\end{equation*}
the face of $B$ that is perpendicular to direction $j$.
The hyperplane $\{(x, u) \in \ZZ^{d} \times \RR;\; x_j=\frac{a_j+b_j}{2}\}$
divides $B$ into two half-boxes, $B_j^{-}$ and $B_j^{+}$, that contain faces
$\partial_{j}^{-}B$ and $\partial_{j}^{+}B$, respectively.
Using this notation, four crossing events of the box
$B = [a,b] \times [s,t]$ will be important in our investigation.
\begin{description}
\item[\textbf{Temporal crossing.}]
Event $T(B)$ in which there is a path from $[a,b]\times \{s\}$ to
$[a,b]\times \{t\}$ in $B$.

\item[\textbf{Temporal half-crossing.}]
Event $\tT(B) := T([a,b] \times [s, \frac{t+s}{2}])$.
In words, we have a temporal crossing from the bottom of $B$ to
the middle of its time interval.

\item[\textbf{Spatial crossing.}]
For some fixed direction $j \in \{1,\ldots, d\}$ we $S_j(B)$ as the event that
there is a crossing from $\partial_{j}^{-}B$ to $\partial_{j}^+B$ in
$B$, i.e., there is a crossing connecting the opposite faces of $B$
that are perpendicular to direction $j$.

\item[\textbf{Spatial half-crossing.}]
For some fixed direction $j \in \{1,\ldots, d\}$, we define events
$\tS_{j,+}(B) := S_j(B_j^{+})$ and $\tS_{j,-}(B) := S_j(B_j^{-})$, in which
we have a spatial crossing in $B$ of a half-box connecting the opposite faces of
direction $j$. To ease notation, we write $\tS_{j,+} = \tS_j$ and
$\tS_{j,-} = \tS_{j+d}$, allowing indices $1 \le j \le 2d$.
\end{description}

Given a box $B$, consider the event
\begin{equation}
\label{e:def_H(B)}
H(B) := \tT(B)\cup \mathsmaller{\bigcup\limits_{j=1}^{2d} \tS_j(B)}
\end{equation}
that we refer to as \textit{half-crossing} of $B$; this event will play a
central role in the renormalization approach to be developed in the
next sections.  Our first aim is to show that $H(B)$ satisfies the so-called
cascading property, meaning that its  occurrence implies the existence of two
well-positioned smaller boxes inside $B$ which are also half-crossed.

\subsection{Cascading property for half-crossing events; hierarchies}
\label{sub:cascading_half_crossings}

In what follows we will analyze half-crossing events inside boxes of type $B_k
:= [-l_k, l_k]^{d} \times [0, h_k]$ where $(l_k) \subset \NN$ and $(h_k)
\subset \RR$ are increasing sequences to be determined later. They must be
interpreted as sequences of spatial and temporal scales along which we analyze
occurrence of half-crossing events. In fact, if the origin starts infected,
i.e.\ if $\xi_0(\mathbf{0}) >0$ and the resulting infection from that point survives
till time $h_k$ then either $T(B_k)$ occurs or the infection must leave box
$B_k$ through some of its faces $\partial^+_{j}B_k$ or $\partial_{j}^{-}B_k$
for $1 \le j \le d$. Thus, one can write

\begin{equation}
\label{eq:survival_prob}
\PP(\tau^{\{0\}} = \infty)
    \le \PP\Big(\tT(B_k) \cup \mathsmaller{\bigcup\limits_{j=1}^{2d} \tS_j(B_k)}\Big)
    = \PP(H(B_k)).
\end{equation}

Any space-time translation of the box $B_k$ is called a \textit{scale-$k$} box.
This section is devoted to proving a deterministic lemma that relates
half-crossings of boxes at two successive scales. In
Lemma~\ref{lema:cascading_half_cross} we prove that for this sequence of boxes
the event $H(B_k)$ is \textit{cascading}, meaning that its occurrence implies
the occurrences of two similar events inside disjoint boxes from the previous
scale. Moreover, we are able to

\begin{itemize}
\item find an upper bound (uniform in $k$) for the amount of pairs of boxes
    that we need to look at in order to find these two half-crossings;

\item control the positions of such pairs of boxes, obtaining that they might
    be taken well-separated in space and time. For a class of examples,
    this allows to decouple the corresponding half-crossing events.
\end{itemize}

This will ultimately allow us to control the right-hand side
in~\eqref{eq:survival_prob} which is useful for proving existence of regimes
when the infection dies out.

Let us now describe the rate of growth for the sequences of scales $(l_k)$ and
$(h_k)$ to be considered.  Given the initial scales $l_0 \in \mathbb{N}$ and
$h_0 \in \mathbb{N} $ and two constants $\alpha, \beta \in \NN$ we define
recursively
\begin{equation}
l_{k+1} = \alpha l_k
\quad \text{and} \quad
h_{k+1} = \beta h_k,
\qquad \text{for $k \ge 0$}.
\label{e:scales}
\end{equation}
Note that $l_k$ and $h_k$ grow exponentially fast. For other possibilities of
scale progression, see Remark~\ref{r:faster_than_exp_growth}.

We are now ready to state and prove the main results in this section. Temporal
and spatial half-crossing will be treated separately.  Let us begin with the
temporal ones.

\begin{lema}[Temporal half-crossings]
\label{lema:temporal_half_cross}
Fix $n \ge 1$ and $\beta \ge 6$.
There are collections $\cB_0 = \cB_{0,n}$ and $\cB'_0 = \cB'_{0,n}$ of
scale-$(n-1)$ boxes such that
\begin{equation*}
\tT(B_n) \subset \bigcup_{(B,B') \in \cB_0 \times \cB_0'} H(B) \cap H(B').
\end{equation*}
Moreover, we may assume that $\cB_0$ and $\cB_0'$ have $(2\alpha-1)^{d}$
elements each, and that the vertical distance between any pair of boxes
$B \in \cB_0$ and $B' \in \cB'_0$ is ${(\beta/2 - 2)h_{n-1}}$.
\end{lema}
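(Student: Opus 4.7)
The approach is constructive: I will build explicit collections $\cB_0, \cB_0'$ of $(2\alpha-1)^d$ scale-$(n-1)$ boxes each, and then, for any realization of $\tT(B_n)$, identify a pair $(B, B') \in \cB_0 \times \cB_0'$ whose half-crossings are forced deterministically by a witness path of $\tT(B_n)$. The vertical-distance requirement dictates the time placement: every box in $\cB_0$ sits at the base of $B_n$ with time interval $[0, h_{n-1}]$, whereas every box in $\cB_0'$ sits just below the mid-height with time interval $[h_n/2 - h_{n-1}, h_n/2]$. This yields a vertical gap of $h_n/2 - 2h_{n-1} = (\beta/2 - 2)h_{n-1}$, which is positive since $\beta \ge 6$. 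Spatially, both collections share the same $(2\alpha-1)^d$ footprints, namely $\prod_{i=1}^{d} [(q_i - 1)l_{n-1}, (q_i + 1)l_{n-1}]$ for $q \in \{-(\alpha-1), \ldots, \alpha-1\}^d$; by construction their union covers $[-l_n, l_n]^d$.

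The heart of the proof is a deterministic case analysis. Fix a path $\gamma\colon [0, h_n/2] \to \ZZ^d$ witnessing $\tT(B_n)$, set $x_0 := \gamma(0)$, and choose $B \in \cB_0$ whose spatial footprint contains $x_0$. Restrict $\gamma$ to $[0, h_{n-1}]$. If this restriction stays in $B$ spatially until time $h_{n-1}/2$, then $\tT(B)$ holds directly; otherwise, let $\tau \in [0, h_{n-1}]$ be the first spatial exit time from $B$ and let $\partial_j^{\pm} B$ be the face through which the exit occurs. Taking $\sigma \in [0, \tau]$ to be the last time $\gamma$ meets the midplane $\{x_j = c_j\}$ of $B$, the segment $\gamma|_{[\sigma, \tau)}$ lies in the corresponding half-box $B_j^{\pm}$ and runs from the midplane to the outer face, realizing $\tS_{j,\pm}(B)$. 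The symmetric argument applied to $\gamma|_{[h_n/2 - h_{n-1},\, h_n/2]}$ using the point $\gamma(h_n/2 - h_{n-1})$ delivers a box $B' \in \cB_0'$ with $H(B')$, yielding the required event inclusion.

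The main obstacle is the subtle point that, if $x_0$ lies on the same side of $B$'s midplane in direction $j$ as the eventual exit face, the path can in principle escape $B$ monotonically without ever meeting that midplane, leaving the time $\sigma$ above undefined. My plan is to resolve this via the overlap in the cover: among the $2\alpha-1$ translates available in each direction, one can always select the box for which the center coordinate $c_j$ separates $(x_0)_j$ from the exit face, thereby forcing the midplane crossing. Starting points that lie near the boundary of $[-l_n, l_n]^d$ are handled by passing to an adjacent translate in $\cB_0$, whose midplane is precisely reached at the moment $\gamma$ exits the initially chosen box; the spatial half-crossing extraction then proceeds as above. Once this selection rule is pinned down, the cardinalities $|\cB_0| = |\cB_0'| = (2\alpha-1)^d$ and the vertical separation $(\beta/2 - 2)h_{n-1}$ follow directly from the construction.
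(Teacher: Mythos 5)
Your scaffolding (the $(2\alpha-1)^d$ footprints of side $2l_{n-1}$ centred at the points of $l_{n-1}\ZZ^d$, the bottom slab $[0,h_{n-1}]$ for $\cB_0$, the slab $[(\beta/2-1)h_{n-1},(\beta/2)h_{n-1}]$ for $\cB_0'$, hence the gap $(\beta/2-2)h_{n-1}$) agrees with the paper, and you have correctly located the delicate point: if $x_0$ lies on the same side of the chosen midplane as the face through which the path leaves, no half-crossing of that box is produced. The problem is that your proposed repair does not close this gap. The ``exit face'' is only defined relative to a box, so the rule ``pick the translate whose centre coordinate separates $(x_0)_j$ from the exit face'' is circular: after reselecting, all you know is that the path touched the new box's direction-$j$ midplane; you do \emph{not} know that it subsequently reaches a direction-$j$ face of the new box while staying inside it, which is exactly what $\tS_{j,\pm}$ requires and what your extraction ``as above'' used (there it relied on the path staying in the box up to its first exit and exiting through the face in question). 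Concretely, with $d=2$, $l=l_{n-1}$ even and $x_0=(l/2,l/2)$: the path moves right to $(l+1,l/2)$, exiting the box $[-l,l]^2$ through its right face without touching its midplane $x_1=0$; your rule reselects the translate $[0,2l]^2$, whose midplane $x_1=l$ was indeed visited, but the path then drops to $(l+1,-1)$ and leaves $[0,2l]^2$ through its bottom face $x_2=0$ with no half-crossing of it, and with $(x_0)_2=l/2$ again on the same side of that box's midplane $x_2=l$ as the exit face. So the rule must be applied again, and nothing in the proposal shows that this iteration terminates at a box which is genuinely half-crossed; the boundary remark (``passing to an adjacent translate whose midplane is precisely reached at the exit moment'') has the same defect.

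The paper's proof resolves this by choosing the box from the \emph{range} of the path at a critical time, not from $x_0$ at time zero. For each direction $j$ one tracks the projection $\cI_j(t)$ of $\gamma([0,t])$, which is an integer interval, and its box count $c_j(t)$ with respect to the cover by the intervals $l_{n-1}z+[0,l_{n-1}]$. If all $c_j$ stay $\le 2$ throughout, the whole crossing is confined to a single footprint of side $2l_{n-1}$ and $\tT$ of that box holds. Otherwise, at the first time $t_1$ at which some $c_{j_0}$ reaches $3$, the range up to $t_1-$ is contained in a box $l_{n-1}z+[0,2l_{n-1}]^d$ that is \emph{tight} in direction $j_0$: the projection at $t_1-$ ends exactly on the face being breached and extends strictly beyond the midplane on the other side. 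Consequently the segment of $\gamma$ between its last visit to that midplane and time $t_1-$ stays in the half-box and joins the midplane to the outer face, realizing $\tS_{j_0,\pm}$ of that box. This ``choose the box at the moment the range first becomes too large'' step is the ingredient missing from your argument; your separating-midplane idea is sound only once the box is selected this way, because only then is the path guaranteed to remain inside the selected box between the midplane visit and the face it pokes out of. The counting $(2\alpha-1)^d$ and the vertical separation then come out exactly as you stated.
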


\begin{proof}
By construction we have $h_n = \beta h_{n-1}$.
The event $\tT(B_n)$ entails the two following temporal crossings
\begin{equation*}
    T([-l_n,l_n]^{d} \times [0,h_{n-1}])
    \quad \text{and} \quad
    T([-l_n,l_n]^{d} \times [(\beta/2 - 1) h_{n-1}, (\beta/2) h_{n-1}]).
\end{equation*}
For $z \in [-\alpha, \alpha - 1] \cap \ZZ =: Z_{\alpha}$ let us define
\begin{equation*}
    I_z := l_{n-1}z + [0,l_{n-1}]
\end{equation*}
that forms a covering of $[-l_n,l_n]$ by $2\alpha$ intervals of length
$l_{n-1}$. On $T([-l_n,l_n]^{d} \times [0,h_{n-1}])$ we can find a path
$\gamma: [0,h_{n-1}] \to [-l_n,l_n]^{d}$ spanning the box in the temporal
direction. Let us consider its range $\cI = \gamma([0,h_{n-1}])$.
Projecting $\cI$ into each one of the coordinate directions $j$ yields
discrete intervals $\cI_j \subset [-l_n,l_n]$. Define the \textit{box
count} of $\cI_j$ as
\begin{equation}
\label{eq:projection_box_count}
c_j := \min\{|I|;\;
    I \subset Z_{\alpha},\ \cI_j \subset \cup_{z\in I} I_z\}.
\end{equation}
We decompose $T([-l_n,l_n]^{d} \times [0,h_{n-1}])$ according to the
values assumed by each $c_j$.

If for every $1\le j \le d$ we have $c_j \le 2$ then the whole path
$\gamma$ is contained inside a $d$-dimensional box with side length
$2l_{n-1}$. In this case, we can choose some
$z \in (Z_{\alpha} \setminus \{\alpha - 1\})^{d}$ such that
\begin{equation*}
    \cI \subset l_{n-1} z + [0,2l_{n-1}]^{d},
\end{equation*}
and the number of possible $z$ is given by $(2\alpha-1)^{d}$.

Now, let us consider the case in which some $c_j \geq 3$ and thus
$\cI$ is not contained in some of the boxes with side length $2l_{n-1}$
described above. In this case, we refine the argument by considering time. For
any time $t \in [0,h_{n-1}]$ we define $\cI(t) := \gamma([0,t])$ and for any
fixed direction $j$ we consider its projection $\cI_j(t)$ and its
box count $c_j(t)$. Define
\begin{equation*}
t_1 := \inf\{t \in [0,h_{n-1}];\; \exists 1 \le j \le d
        \text{ such that $c_j(t) \geq 3$}\}.
\end{equation*}
Since $\gamma$ can only change value when there is transmission to a neighboring
site, at time $t_1$ we have $c_{j_0}(t_1-) = 2$ and $c_{j_0}(t_1) = 3$ for some
special direction $j_0$ and $c_j(t_1) \le 2$ for every other direction. Thus, there
is $z \in (Z_{\alpha} \setminus \{\alpha - 1\})^{d}$ such that
\begin{equation*}
    \cI(t_1-) \subset l_{n-1} z + [0,2l_{n-1}]^{d}
    \quad \text{but} \quad
    \cI_{j_0}(t_1) \nsubseteq l_{n-1} z + [0,2l_{n-1}]^{d}
    \quad \text{and} \quad
    c_{j_0}(t_1) = 3.
\end{equation*}
Notice that this means path $\gamma$ must have crossed a half-box of
$l_{n-1} z + [0,2l_{n-1}]^{d}$ on direction $j_0$ during time
interval $[0,t_1] \subset [0, h_{n-1}]$, see Figure~\ref{fig:temp_project_gen}.
In any case, we have that $H(B)$ happens for some box $B$ in
\begin{equation*}
\cB_0
    := \bigl\{(l_{n-1} z + [0,2l_{n-1}]^{d}) \times [0, h_{n-1}];\;
        z \in (Z_{\alpha} \setminus \{\alpha - 1\})^{d}\bigr\}.
\end{equation*}
Applying the same argument for event
$T([-l_n,l_n]^{d} \times [(\beta/2 - 1) h_{n-1}, (\beta/2) h_{n-1}])$,
we conclude that we can take $\cB_0'$ as the vertical translation of boxes
of $\cB_0$ by $(\beta/2 - 2) h_{n-1}$.
\end{proof}

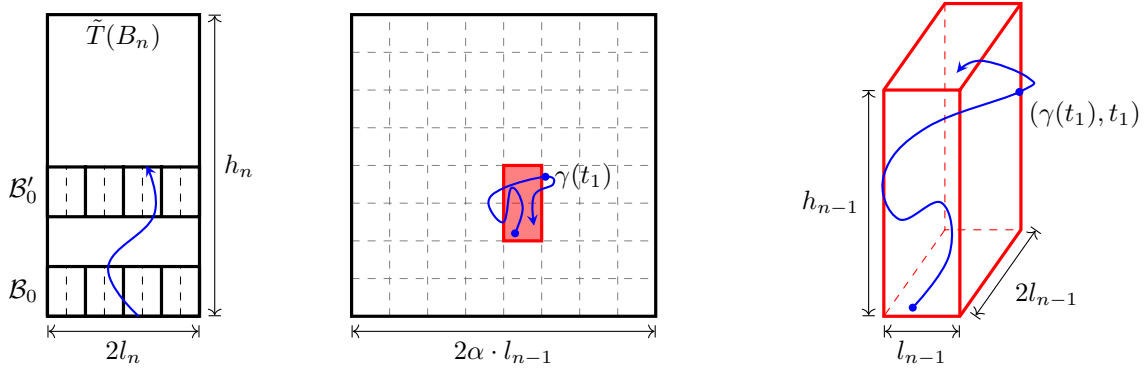
\begin{figure}
\centering
\begin{tikzpicture}[scale=1,
    dot/.style={fill, minimum size=3pt, outer sep=0pt,
                    inner sep=0pt, circle, blue},
    z={(55:.7cm)}
    ]
\begin{scope}[shift={(-1,0)}]
    \draw[very thick] (-3,0) rectangle (-1,4);
    \node[below] at (-2, 4) {$\tT(B_n)$};
    \node[left] at (-3,.33) {$\cB_0$};
    \draw[xstep=.25, ystep=.66, dashed]
    (-3,0) grid (-1,.66);
    \draw[xstep=.5, ystep=.66, very thick]
    (-3,0) grid (-1,.66);
    \node[left] at (-3,1.67) {$\cB'_0$};
    \draw[xstep=.25, ystep=.66, dashed]
    (-3,1.31) grid (-1,2);
    \draw[xstep=.5, ystep=.66, very thick]
    (-3,1.31) grid (-1,2);
    \draw[thick, tension=.7, blue, ->, >=stealth] plot [smooth] coordinates
    {(-1.8,0) (-2.2,.66) (-1.6, 1.34) (-1.7, 2)};
\draw[|<->|] (-3,-.2) -- (-1,-.2)
    node[midway, below] {$2l_{n}$};
\draw[|<->|] (-.8,0) -- (-.8,4) node[midway, right] {$h_{n}$};
\end{scope}

\draw[step=.5, dashed, help lines] (0,0) grid (4,4);
\draw[very thick] (0,0) rectangle (4,4);
\draw[|<->|] (0,-.2) -- (4,-.2)
    node[midway, below] {$2\alpha \cdot l_{n-1}$};
\draw[very thick, red] (2,1) rectangle (2.5,2);
\fill[opacity=.5, red] (2,1) rectangle (2.5,2);
\coordinate[dot] (A) at (2.15, 1.1);
\coordinate (B) at (2.4, 1.2);
\coordinate[dot] (t1) at (2.55, 1.85);
\node[right] at (t1) {$\gamma(t_1)$};
\draw[thick, tension=.7, blue, ->, >=stealth] plot [smooth] coordinates
    {(A) (2.25, 1.45) (2.1, 1.7) (2, 1.25) (1.8, 1.5) (2, 1.7)
        (t1) (2.65, 1.75) (2.4, 1.6) (B)};

\draw[very thick, red] (7,0)
    -- ++( 1,0,0) -- ++(0,0, 2) -- ++(0,3,0)
    -- ++(-1,0,0) -- ++(0,0,-2) -- cycle; 
\draw[red, dashed] (7,3,2) -- (7,0,2) -- (8,0,2) (7,0,2) -- (7,0,0);
\draw[very thick, red] (8,0,0) -- (8,3,0) -- (7,3,0) (8,3,0) -- (8,3,2);
\draw[|<->|] (6.8,0) -- (6.8,3) node[midway, left] {$h_{n-1}$};
\draw[|<->|] (7,-.2) -- (8,-.2) node[midway, below] {$l_{n-1}$};
\draw[|<->|] (8.2,0,0) -- (8.2,0,2) node[midway, below right] {$2l_{n-1}$};
\coordinate[dot] (3dA)  at (7.3, 0.0, 0.2);
\coordinate      (3dB)  at (7.8, 3.0, 0.4);
\coordinate[dot] (3dt1) at (8.1, 2.0, 1.7);

\node[below right] at (3dt1) {$(\gamma(t_1), t_1)$};
\draw[thick, tension=.7, blue, ->, >=stealth] plot [smooth] coordinates {
    (3dA) (7.5, 0.3, 0.9) (7.2, 0.7, 1.4) (7.0, 1.0, 0.5) (6.6, 1.3, 1.0)
    (7.0, 1.7, 1.4) (8.1, 2.0, 1.7) (8.3, 2.3, 1.5) (7.8, 2.7, 1.2) (3dB)};
\end{tikzpicture}
\caption{An illustration for the argument in Lemma~\ref{lema:temporal_half_cross} when $d=2$.
On the event $\tT(B_n)$, $H(B) \cap H(B')$ occurs for a box $B \in
\cB_0$ and another $B' \in \cB'_0$.
For $\cB_0$, when the projection of the
temporal crossing into space coordinates is not contained in one of the $(2\alpha-1)^{d}$ sub-boxes of side length $2l_{n-1}$ a spatial crossing of a half-box of scale
$n-1$ must occur.}
\label{fig:temp_project_gen}
\end{figure}

We now turn our attention to spatial half-crossings for which a similar result also holds.
\begin{lema}[Spatial half-crossing]
\label{lema:spatial_half_cross}
Assume that $\alpha \ge 4$.
Let $n \ge 1$ and $1 \le j \le 2d$.
There are collections $\cB_j = \cB_{j,n}$ and $\cB'_j = \cB'_{j,n}$ of scale-$(n-1)$
boxes such that
\begin{equation*}
\tS_j(B_n)
    \subset \bigcup_{(B,B') \in \cB_j \times \cB_j'} H(B) \cap H(B').
\end{equation*}
Moreover, we may assume that $\cB_j$ and $\cB_j'$ have $(2\beta-1) \cdot (2\alpha-1)^{d-1}$
elements and that any pair of boxes $B \in \cB_j$ and $B' \in \cB_j'$ have spatial distance at
least $(\alpha/2-2) 2l_{n-1}$.
\end{lema}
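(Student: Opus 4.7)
The plan is to parallel the proof of Lemma~\ref{lema:temporal_half_cross}, interchanging the roles of the spatial direction $j$ and of time. By symmetry I take $1 \le j \le d$, so that $\tS_j(B_n) = S_j(B_{n,j}^+)$ supplies a path $\gamma:[s,t] \to \ZZ^d$ contained in $B_{n,j}^+$ with $\gamma_j(s) = (a_j+b_j)/2$ and $\gamma_j(t) = b_j$; the case $d+1 \le j \le 2d$ is treated symmetrically.

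To obtain half-crossings in two boxes well-separated in direction $j$, I decompose $\gamma$ using the stopping times
\[
t_1 := \inf\bigl\{u : \gamma_j(u) = \tfrac{a_j+b_j}{2} + 2l_{n-1}\bigr\},
\qquad
t_2 := \sup\bigl\{u : \gamma_j(u) = b_j - 2l_{n-1}\bigr\}.
\]
On $[s,t_1]$ the coordinate $\gamma_j$ is confined to the left slab $[\tfrac{a_j+b_j}{2}, \tfrac{a_j+b_j}{2} + 2l_{n-1}]$, and on $[t_2,t]$ to the right slab $[b_j - 2l_{n-1}, b_j]$. The hypothesis $\alpha \ge 4$ makes these slabs disjoint, with direction-$j$ gap $(\alpha - 4)l_{n-1} = (\alpha/2 - 2) \cdot 2l_{n-1}$, which will furnish the claimed spatial separation between $\cB_j$ and $\cB_j'$.

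Next I run the box-count argument of Lemma~\ref{lema:temporal_half_cross} on $\gamma|_{[s, t_1]}$, now with the ``free'' coordinates being the $d-1$ spatial directions $k \ne j$ together with time. I cover $[-l_n, l_n]$ in each direction $k \ne j$ by the $2\alpha$ intervals $I_z = l_{n-1}z + [0, l_{n-1}]$ (yielding $2\alpha - 1$ overlapping pairs of length $2l_{n-1}$), and cover $[0, h_n]$ by $2\beta$ intervals of length $h_{n-1}/2$ (yielding $2\beta - 1$ overlapping pairs of length $h_{n-1}$). Combined with the fixed $j$-slab, these produce the collection $\cB_j$ of $(2\alpha - 1)^{d-1}(2\beta - 1)$ candidate scale-$(n-1)$ boxes.

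If the resulting box counts of $\gamma|_{[s, t_1]}$ are all $\le 2$, then this piece sits inside some $B \in \cB_j$ and realizes the full spatial crossing $S_j(B) \subset \tS_j(B) \subset H(B)$. Otherwise, let $u_1$ be the first time some box count reaches $3$; then, exactly as in Lemma~\ref{lema:temporal_half_cross}, the path achieves a new extreme either in some spatial direction $k_0 \ne j$ (giving $\tS_{k_0}(B)$) or in time (giving $\tT(B)$), for a suitable $B \in \cB_j$, so $H(B)$ holds. Applying the same analysis to $\gamma|_{[t_2, t]}$ produces $\cB_j'$ with the same cardinality, and the slab disjointness gives the required spatial separation. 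I expect the main delicate point to be the ``temporal exit'' case: when the time box count first reaches $3$, the time extent of $\gamma$ over $[s, u_1)$ exceeds $h_{n-1}$, so, while $\gamma$ stays in the spatial scale-$(n-1)$ box, one must choose the candidate $B \in \cB_j$ whose bottom time face coincides with a grid point already visited by $\gamma$ before the exit, so that a sub-path of $\gamma$ realizes $\tT(B)$; the grid-aligned covering in time ensures such a $B$ can always be found.
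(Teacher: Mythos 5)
Your proposal is correct and follows essentially the same route as the paper: you extract the two slab crossings of width $2l_{n-1}$ at the ends of the half-box (giving the separation $(\alpha/2-2)2l_{n-1}$, with $(2\alpha-1)^{d-1}(2\beta-1)$ candidate boxes each) and then localize each slab crossing by the box-count argument of the temporal lemma. The only organizational difference is that you include time as an extra box-count coordinate in a single pass, whereas the paper first localizes in the $d-1$ free spatial directions with full time extent $h_n$ and then ``adjusts the time dimension'' separately; your explicit treatment of the temporal-exit case (choosing a window whose bottom grid time is already visited and followed by $h_{n-1}/2$ of confinement) is exactly the content of that adjustment.
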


\begin{proof}
By symmetry, we can assume $j=1$.
On the event $S_1([0,l_n] \times [-l_n,l_n]^{d-1} \times [0,h_n])$
we have a half-crossing of box $B_n$, that entails the crossing of two
smaller boxes:
\begin{equation*}
S_1([0, 2l_{n-1}] \times [-l_n,l_n]^{d-1} \times [0,h_n])
\quad \text{and} \quad
S_1([l_n-2l_{n-1},l_n] \times [-l_n,l_n]^{d-1} \times [0,h_n]).
\end{equation*}
Similarly to what was done in the proof of
Lemma~\ref{lema:temporal_half_cross}, we will build a collection $\cB_1$
inside the first box and take $\cB_1'$ as a translation of $\cB_1$.  Hence
the spatial distance of boxes in $\cB_1$ and $\cB_1'$ is at least
$(\alpha/2-2) 2l_{n-1}$. Consider the collection of boxes
\begin{equation*}
\cC
    := \bigl\{(l_{n-1} z + [0,2l_{n-1}]^{d}) \times [0,h_n];\;
        z \in \{0\} \times (Z_\alpha \setminus \{\alpha-1\})^{d-1}\bigr\}.
\end{equation*}

Consider a path $\gamma: [s_1,t_1] \to \ZZ^{d}$ that realizes
$S_1([0, 2l_{n-1}] \times [-l_n,l_n]^{d-1} \times [0,h_n])$ and let
$\cI_j$ be the projection of $\gamma([s_1,t_1])$ on direction $j$ and $c_j$
be its box count, i.e.,
\begin{equation*}
c_j := \min\{|I|;\;
    I \subset Z_{\alpha},\ \cI_j \subset \cup_{z\in I} I_z\}.
\end{equation*}
Like in the previous lemma, if $c_j \le 2$  for every $2 \le j \le d$,
we can ensure that $\gamma$ is contained in some box $B \in \cC$ and
$S_1(B)$ happens. On the other hand, if some $c_j \geq 3$ then $\tS_j(B)$
happens for some box $B \in \cC$.
In both cases, the crossing of our smaller box implies the occurrence
of some half-crossing of a box $B \in \cC$ inside it, of the form
$[0,2l_{n-1}]^{d} \times [0,h_n]$. Finally, we adjust the time dimension of $B$
with a similar argument. Denote by $\pi(B)$ the space-projection of a
space-time box $B$. Define
\begin{equation*}
\cB_1 :=
    \{
        \pi(B) \times [i h_{n-1}, (i+1)h_{n-1}];\;
        B \in \cC, 0 \le i \le \beta-1, i \in 1/2 + \ZZ
    \}.
\end{equation*}
Our path $\gamma$ ensures that either we have $\tS_{j}(B)$ for some
direction $1 \le j \le d$ and box $B \in \cB_1$ or we have
$\tT(B)$ for some $B \in \cB_1$. It is easy to check that $\cB_1$ has
$(2\beta-1)\cdot (2\alpha-1)^{d-1}$ elements.
\end{proof}

Putting together Lemmas~\ref{lema:temporal_half_cross}
and~\ref{lema:spatial_half_cross} we readily obtain a result that relates the occurrence of half crossings at successive scales:
\begin{lema}[Cascading half-crossings]
\label{lema:cascading_half_cross}
For any $n \ge 1$, $\alpha \ge 4$ and $\beta \ge 6$ it holds
\begin{equation}
\label{e:entropy}
H(B_n) \subset
    \bigcup_{j=0}^{2d}
    \Bigl(
    \bigcup_{(B,B') \in \cB_j \times \cB_j'} H(B) \cap H(B')
    \Bigr).
\end{equation}
Hence, whenever we have a half-crossing of a scale-$n$ box we can find
half-crossings of two scale-$(n-1)$ boxes $B$ and $B'$ that either have
vertical distance at least $(\beta/2 - 2)h_{n-1}$ or have spatial distance at
least $(\alpha/2-2) 2l_{n-1}$.
Moreover, we can find such pair considering at most
\begin{equation*}
C(d, \alpha, \beta)
    := \bigl((2\alpha-1)^{d}\bigr)^2 +
        (2d)\cdot \bigl((2\beta-1)\cdot (2\alpha-1)^{d-1}\bigr)^2
\end{equation*}
pairs of boxes $(B,B')$.
\end{lema}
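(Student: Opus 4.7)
The plan is to observe that this lemma is essentially a direct packaging of Lemmas~\ref{lema:temporal_half_cross} and~\ref{lema:spatial_half_cross}, so the strategy is simply to decompose $H(B_n)$ according to its definition in~\eqref{e:def_H(B)} and apply each sub-lemma to the corresponding piece. First, I would write
\begin{equation*}
H(B_n) = \tT(B_n) \cup \bigcup_{j=1}^{2d} \tS_j(B_n),
\end{equation*}
which reduces the task to analyzing each of these $2d+1$ events separately. Since $\beta \ge 6$ and $\alpha \ge 4$ are in force, both previous lemmas apply.

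Next, I would apply Lemma~\ref{lema:temporal_half_cross} to the temporal piece, which yields the collections $\cB_0$ and $\cB_0'$ of scale-$(n-1)$ boxes, each of cardinality $(2\alpha-1)^{d}$, with $B \in \cB_0$ and $B' \in \cB_0'$ separated vertically by $(\beta/2-2)h_{n-1}$. For each fixed $j \in \{1,\ldots,2d\}$, I would apply Lemma~\ref{lema:spatial_half_cross} to the event $\tS_j(B_n)$, producing collections $\cB_j$ and $\cB_j'$ of scale-$(n-1)$ boxes, each of cardinality $(2\beta-1)(2\alpha-1)^{d-1}$, with pairs separated spatially by at least $(\alpha/2-2)\cdot 2l_{n-1}$. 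Taking the union over $j \in \{0,1,\ldots,2d\}$ of these containments gives exactly the inclusion~\eqref{e:entropy}.

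Finally, for the combinatorial count, I would simply add the sizes of the index sets: from the temporal case we obtain $|\cB_0 \times \cB_0'| = \bigl((2\alpha-1)^{d}\bigr)^2$ pairs, and from each of the $2d$ spatial directions we obtain $|\cB_j \times \cB_j'| = \bigl((2\beta-1)(2\alpha-1)^{d-1}\bigr)^2$ pairs, so the total number of pairs to inspect is
\begin{equation*}
\bigl((2\alpha-1)^{d}\bigr)^2 + (2d)\cdot \bigl((2\beta-1)(2\alpha-1)^{d-1}\bigr)^2 = C(d,\alpha,\beta),
\end{equation*}
as claimed. The separation statement is immediate from how the collections were built in the two previous lemmas, since vertical separation is inherited from the temporal case and spatial separation from any of the spatial cases. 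There is no real obstacle here: the work has already been done in the two preceding lemmas, and this lemma merely repackages them in a form convenient for iteration across scales.
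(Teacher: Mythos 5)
Your proposal is correct and matches the paper exactly: the paper itself presents Lemma~\ref{lema:cascading_half_cross} as an immediate consequence of combining Lemmas~\ref{lema:temporal_half_cross} and~\ref{lema:spatial_half_cross}, precisely the decomposition, union, and count you carry out.
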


Lemma~\ref{lema:cascading_half_cross} provides explicit control on the amount
and on the position of the boxes where the half-crossings are found when moving
from one scale to the previous one. This allows to derive upper bounds
for the probability of half-crossings at large scales as we explain next. We
will present two possible approaches for obtaining such upper bounds. One of
them is to derive a contracting inequality relating the probability of the
crossing events at two successive scales. The other one is to move all the way
down to the bottom scale obtaining what we call an hierarchical structure.

\medskip
\noindent
\textbf{Recurrence inequality.}
Let $u_n := \sup_{(x,t)} P(H((x, t) + B_n))$.
Lemma~\ref{lema:cascading_half_cross} implies
\begin{equation}
\label{e:recurrence_ineq}
u_n
    \le C(\alpha, \beta, d) u_{n-1}^{2} +
        C(\alpha, \beta, d) \max_{(B,B')} \Cov(\I_{H(B)}, \I_{H(B')}),
\end{equation}
where the maximum runs over pairs of scale-$(n-1)$ boxes $(B,B')$ ranges in $ \cB_j \times \cB_j'$ with $0 \le j \le 2d$ and $C(\alpha, \beta, d)$ is given by \eqref{e:entropy}.
Provided that one is able to obtain good upper bounds on the covariance
of the events $H(B)$ and $H(B')$, then \eqref{e:recurrence_ineq} becomes a contraction, and therefore $u_n \downarrow 0$.
This is a very common strategy in renormalization.

\begin{remark}
\label{r:faster_than_exp_growth}
Notice that although we have worked with sequences $l_k, h_k$ that
grow at an exponential rate, Lemma~\ref{lema:cascading_half_cross} allows us to consider more general scale progressions.
In fact we may consider the more general relations
\begin{equation*}
l_{k+1} = \alpha_k l_k
\quad \text{and} \quad
h_{k+1} = \beta_k h_k,
\qquad \text{for $k \ge 1$},
\end{equation*}
and $H(B_n)$ is contained in a union of at most $C(d, \alpha_{n-1},
\beta_{n-1})$ pairs of scale-$(n-1)$ boxes.
For instance, one can use
sequences that grow faster than exponentially, in order to obtain better
decoupling inequalities, i.e.\ better bounds on the covariance.
This approach has been applied in~\cite[Theorem~1.1]{FMUV}.
One difference there is that it does not focus on events $H(B_n)$ but instead consider spatial and temporal crossings separately.
In the construction therein $\alpha = 2$, which means that pairs of boxes $(B,B')$ with $B = B'$ are considered.
Here, we have opted to focus on $\alpha \ge 4$ to ensure that the boxes considered are better separated.
\end{remark}

\medskip
\noindent
\textbf{Hierarchy.} Lemma~\ref{lema:cascading_half_cross} also allows for the
definition of a \textit{hierarchy} of boxes.  The idea is simple: fix $k\ge 1$
and consider box $B_k = [-l_k,l_k]^{d} \times [0, h_k]$.  Using
Lemma~\ref{lema:cascading_half_cross}, the original scale-$k$ box gives birth
to a pair of scale-$(k-1)$ boxes. After iterating the use of the same lemma we
end up with a collection of $2^{k}$ scale-$0$ boxes, all of which have been
half-crossed. This structure of boxes can be encoded via a binary tree,
in a construction that is similar to the one in~\cite{Rath} regarding random
interlacements.

Let us introduce some notation. We use words $a \in \{0,1\}^{n}$ to encode the
leaves of a binary tree of depth $n$. Consider that $\varnothing$ is the root
vertex, and $0$ and $1$ denote the left and right children of $\varnothing$,
respectively. We append digits to the right of a word $a \in \{0,1\}^n$ in
order to create longer words, e.g., $a1 \in \{0,1\}^{n+1}$ is the word that
encodes the right child of $a$. Morever, for $a,b \in \{0,1\}^{n}$ we define
the depth of $a$ by $|a| := n$ and define $a \wedge b \in \cup_{i=0}^{n}
\{0,1\}^{i}$ to be the common ancestor of $a$ and $b$ of highest depth. In
other words, $a$ and $b$ are descendants of $a \wedge b$ but there is no $c$
child of $a \wedge b$ that has both $a$ and $b$ as descendants.

For a fixed $k \ge 1$ we consider box $B_k$ as the root of the hierarchy,
and encode its descendants obtained via Lemma~\ref{lema:cascading_half_cross}
by a binary tree. More precisely, a collection of boxes
\begin{equation*}
    \cH_k = \{B_k(a);\; a \in \cup_{i=0}^{k} \{0,1\}^{i}\}
\end{equation*}
is called a \textit{hierarchy} of $B_k$ if $B_k(\varnothing) = B_k$, and for
every $a \in \cup_{i=0}^{k-1} \{0,1\}^{i}$ the boxes $B_k(a0)$ and $B_k(a1)$ are
disjoint scale-$(k-|a|-1)$ boxes contained in $B_k(a)$.
A hierarchy is said to be \textit{achievable} if for all boxes $B_k(a)$ its children are a pair of boxes $(B,B')$ from the choice of pairs given in
Lemma~\ref{lema:cascading_half_cross}.
Finally, let us define
\begin{equation*}
    \cX_k := \{\cH_k;\; \text{$\cH_k$ is an achievable hierarchy}\}.
\end{equation*}
It is clear from Lemma~\ref{lema:cascading_half_cross} that
$\# \cX_k \le C \cdot C^2 \cdot \ldots \cdot C^{2^{k-1}} \le C^{2^{k}}$ for
$C=C(d,\alpha,\beta)$ given by \eqref{e:entropy}.

Define the set of \textit{leaves} of $\cH_k$ as
\begin{equation*}
    L(\cH_k) := \{B \in \cH_k;\; B = B_k(a), \ a \in \{0,1\}^{k}\},
\end{equation*}
that is, the set of scale-$0$ boxes of $\cH_k$.
Then, for any probability measure given by a GCP we have
\begin{equation}
\label{eq:hierarchy_estimate}
\PP(H(B_k))
    \le \sum_{\cH_k \in \cX_k}
        \PP\Bigl(
            \bigcap_{B \in L(\cH_k)} \hspace{-.3cm} H(B)
            \Bigr)
    \le C^{2^{k}} \max_{\cH_k}
        \PP\Bigl(
            \bigcap_{B \in L(\cH_k)} \hspace{-.3cm} H(B)
            \Bigr).
\end{equation}

Recall that we want to prove that the infection dies out almost surely.
Since $\PP(\tau^{\{0\}} = \infty) \le \PP(H(B_k))$, the estimate
in~\eqref{eq:hierarchy_estimate} shows it is sufficient to prove
that $\PP\bigl(\cap_{B \in L(\cH_k)} H(B)\bigr) \le \varepsilon^{2^{k}}$
for $\varepsilon$ sufficiently small, uniformly over $\cH_k \in \cX_k$.

Fix any achievable hierarchy $\cH_k \in \cX_k$. By construction, it contains
$2^{k}$ scale-$0$ boxes (its leaves) and any two of them are either
separated by a spatial distance of at least $(\alpha/2-2)2l_0$ or by a
temporal distance of at least $(\beta/2-2)h_0$.
Indeed, we have for $a \neq b$ with $|a| = |b| = k$ that $B_k(a)$ and $B_k(b)$ are both contained in $B_k(a
\wedge b)$, but are in different children of $B_k(a \wedge b)$. Hence, by
definition of achievable hierarchy boxes $B_k((a \wedge b)0)$ and
$B_k((a \wedge b)1)$ are well-separated, implying that $B_k(a)$ and $B_k(b)$
enjoy the same property.

Fix some ordering $\{L_1, L_2, \ldots, L_{2^{k}}\}$ for $L(\cH_k)$ such
that $L_j$ is always either above or at the same height of every previous leaf.
Since
\begin{equation}
\label{eq:prod_conditionals}
\PP \Big( \medmath{\bigcap\limits_{j=1}^{2^k}} H(L_j)\Big)
    = \prod_{j=1}^{2^{k}} \PP\big(H(L_j) \mid H(L_1) \cap \ldots \cap H(L_{j-1})\big),
\end{equation}
we focus on estimating the conditional probabilities above. This task depends
on the point processes chosen for the GCP.

\section{Contact Process on Dynamic Edges}
\label{sec:cpde}

The authors in~\cite{LR} define the CPDE on $\mathbb{Z}$, but also remark that
the definition extends easily to any connected graph with bounded degree. Here
we will focus on the case of $\ZZ^{d}$. The idea is to start with an
underlying dynamic \textit{environment} process $\zeta_t \in
\{0,1\}^{E(\ZZ^{d})}$ and, conditional on the realization of this environment
to define an \textit{infection} process $\eta_t \in \{0,1\}^{\ZZ^{d}}$ similar
to the classical contact process, with the main difference that its evolution
depends on the changing environment. More precisely, let us fix two parameters
$v>0$ and $p \in (0,1)$. Independently of everything else, each edge $e$ in
the environment is assigned an initial state $\zeta_0(e)$ with the Bernoulli
distribution with parameter $p$, $\Ber(p)$, and independently updates its
state as follows:
\begin{alignat*}{3}
    & 0 \longrightarrow 1 \quad && \text{at rate} \quad && vp, \\
    & 1 \longrightarrow 0 \quad && \text{at rate} \quad && v(1-p).
\end{alignat*}
We say that the edge $e$ is open at time $t$ if $\zeta_t(e) =1$ and that it is
closed at time $t$ otherwise.  We may think of the state of each edge $e$
independently alternating between open and closed at the given rates.  This
defines the Markov process $\{\zeta_t\}_{t\geq 0}$ taking values in
$\{0,1\}^{E(\mathbb{Z}^d)}$ usually called dynamic bond percolation on
$\mathbb{Z}^d$ with density parameter $p$ and rate $v$.  In fact, the choice for the
initial distribution $\zeta_0$ as being the product of $\Ber(p)$ implies that
$\zeta_t$ is stationary. Hence, for each fixed time $t$, the configuration
$\{\zeta_t(e)\}_{e\in E(\mathbb{Z}^d)}$ is distributed as an independent bond
percolation process on $\mathbb{Z}^d$.

We will now define the contact process $\eta_t$ whose evolution will depend on
a parameter $\lambda>0$ and on the underlying environment $\zeta_t$.  At each
site $x$, the state $\eta_t(x)$ evolves as follows:
\begin{alignat}{3}
    & 1 \longrightarrow 0 \quad && \text{at rate} \quad && 1, \nonumber \\
    & 0 \longrightarrow 1 \quad && \text{at rate} \quad &&
    \lambda \sum_{y \sim x} \zeta_t(xy) \eta_t(y).
  \label{e:def_eta}
\end{alignat}
In words, each site $y$ attempts to infect a neighboring site $y$ at rate
$\lambda$ through the edge $e=xy$ as in the usual contact process on
$\mathbb{Z}^d$. However, it will only succeed in case that edge is found open
at the time of the attempt.

In \cite[Section 3.1]{LR} the authors define this process via a standard
graphical construction, employing a collection of independent Poisson point
processes on $(0,\infty)$:
\begin{itemize}
\item $\{\cO_e\}$ of rate $vp$, whose marks provide the opening times of
    edge $e$.
\item $\{\cC_e\}$ of rate $v(1-p)$, whose marks provide the closing times of
    edge $e$.
\item $\{\cI_e\}$ of rate $\lambda$, whose marks provide the times of potential
    transmissions along edge $e$.
\item $\{\cR_x\}$ of rate $1$, whose marks provide the cure (recovery) times of
    site $x$.
\end{itemize}
This graphical construction serves as a tool in our methods. The reader
may consult \cite[Section 3.1]{LR} for more details.

\begin{remark} 
It is worth noticing that actually the CPDE can be seen as a Renewal Contact
Process with renewals on the edges and a delay.
However, it is not straightforward how to exploit this fact since the resulting
interarrival distribution $\mu$ depends on parameters $\lambda,v,p$, and is
therefore affected by changes on any of these parameters.
\end{remark}

Using the above notation, the critical parameter is given by
\begin{equation*}
\lambda_0(v,p)
    = \inf\{\lambda > 0;\;
    \PP_{v,p,\lambda} (\smash{\eta^{\{0\}}_t} \not\equiv 0,\ \forall t > 0) > 0\}.
\end{equation*}

\subsection{Proof of Theorem~\ref{teo:extinction_cpde}(i)}
\label{sub:teo_cpde_i}

In this section we apply the hierarchical approach to renormalization. On the
course of the proof we will need to use a straightforward estimate on how long
the usual contact process restricted to a finite (static) cluster can survive.
We state it as a lemma and include its proof for the reader's convenience.
\begin{lema}
\label{lema:extinction_cp}
Let $G$ be any connected subgraph of $\ZZ^{d}$ with at most $n$
vertices and $\tau = \tau(G)$ the extinction time of the contact process
started from the configuration $\zeta^G_0$, where only the vertices in $G$ are
infected. There exists $\kappa = \kappa(\lambda, d)>0$ such that for every $\nu > 1/n$,
\begin{equation}
\label{eq:extinction_cp}
    \PP(\tau(G) \ge e^{\nu n}) \le \exp[- e^{(\nu - \kappa) n}/2].
\end{equation}
\end{lema}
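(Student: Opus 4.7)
The plan is to exhibit, on every unit time interval, an event of probability at least $e^{-\kappa n}$ that forces the infection to go extinct by the end of the interval, and then to use the independence of the graphical marks over disjoint time strips to iterate.

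\textbf{The forcing event.} Fix $s \ge 0$. Consider the event $A_s$ defined from the graphical construction by the conjunction of the following two conditions:
\begin{itemize}
\item every vertex $x \in G$ has at least one cure mark in $[s,s+1]$;
\item no edge $e$ with at least one endpoint in $G$ has any transmission mark in $[s,s+1]$.
\end{itemize}
Since cure marks at $x$ form a Poisson process of rate $1$, the first condition has probability $(1-e^{-1})^{|G|} \ge (1-e^{-1})^n$. The number of edges touching $G$ is bounded by $2d n$, and since transmission marks on each edge form a Poisson process of rate $\lambda$, the second condition has probability at least $e^{-2d\lambda n}$. These two conditions are independent, so $\PP(A_s) \ge e^{-\kappa n}$ for $\kappa = \kappa(\lambda,d) := -\log(1-e^{-1}) + 2d\lambda$.

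\textbf{Extinction on $A_s$.} On $A_s$, starting from any configuration in which the set of infected sites is contained in $G$ at time $s$, during $[s,s+1]$ there are no transmission marks in or out of $G$, so the infected set can only shrink and remains a subset of $G$. Every $x \in G$ has a cure mark in $[s,s+1]$, which turns $x$ healthy if it was infected, and since $x$ cannot be reinfected in $[s,s+1]$ (no transmissions), $x$ is healthy at time $s+1$. Thus on $A_s$, the process is extinct by time $s+1$; in particular, $\{\tau \ge s+1\} \subset A_s^{\mathsf c}$ on the event $\{\tau > s\}$.

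\textbf{Iteration.} The events $A_0, A_1, A_2, \ldots$ are independent because they depend on disjoint time strips of the graphical construction. Therefore
\begin{equation*}
\PP(\tau(G) \ge k) \le \PP\Bigl(\bigcap_{j=0}^{k-1} A_j^{\mathsf c}\Bigr) \le (1 - e^{-\kappa n})^{k} \le \exp\!\bigl(-k\,e^{-\kappa n}\bigr).
\end{equation*}
Applying this with $k = \lfloor e^{\nu n} \rfloor$ and noting that $\nu n > 1$ gives $e^{\nu n} \ge e > 2$, so $\lfloor e^{\nu n} \rfloor \ge e^{\nu n}/2$. Hence
\begin{equation*}
\PP(\tau(G) \ge e^{\nu n}) \le \exp\!\bigl(-\tfrac{1}{2}\,e^{(\nu-\kappa) n}\bigr),
\end{equation*}
which is the desired bound.

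\textbf{Potential obstacle.} The only mildly delicate point is making sure the forcing event is strong enough to kill \emph{every} infected vertex in $G$ regardless of the initial configuration at time $s$; this is why we forbid \emph{all} transmission marks on edges touching $G$ rather than merely requiring cures to occur after the last transmission. With this stronger definition the argument is entirely deterministic on $A_s$, and the cost is only an extra $e^{-2d\lambda n}$ factor that is absorbed into $\kappa$.
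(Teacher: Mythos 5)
Your proposal is correct and follows essentially the same route as the paper: the paper's event $T_j$ (every vertex of $G$ heals before the first transmission in $[j,j+1]$) is lower-bounded there by exactly your forcing event (a cure mark at each vertex and no transmission mark on the relevant edges in the unit interval), yielding $\PP(T_0)\ge e^{-\kappa n}$ with a $\kappa$ of the same form, and the iteration over independent unit time strips together with $\lfloor e^{\nu n}\rfloor \ge e^{\nu n}/2$ is identical. The only differences are cosmetic (your slightly smaller $\kappa=-\log(1-e^{-1})+2d\lambda$ versus the paper's $2d\lambda+1$), so nothing further is needed.
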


\begin{proof}
We fix $G$ throughout the proof and provide bounds that are uniform in $G$.
Let $T_j$ be the event in which every vertex in $G$ heals before the first
transmission in $[j,j+1]$. Then, it is clear that
$\{\tau \ge k\} \subset \cap_{j=0}^{k-1} T_j^{\comp}$, which implies for
$k = \lfloor e^{\nu n} \rfloor$,
\begin{equation*}
\PP(\tau \ge e^{\nu n})
    \le \prod_{j=0}^{k-1} \PP(T_j^{\comp})
    = (1 - \PP(T_0))^{k}
    \le \exp[ - k \cdot \PP(T_0) ]
      \leq \exp[- \PP(T_0) \,e^{\nu n}/2],
\end{equation*}
where we have used the fact that the events $T_j$ are independent and have
the same probability together with the fact that
$\lfloor e^{\nu n} \rfloor > e^{\nu n}/2$ if $\nu > 1/n$.
Since $G$ has at most $2dn$ edges and $n$ vertices, it is clear that
\begin{align*}
\PP(T_0)
    &\ge \PP\Bigl(
    \bigcap_{e \in E(G)} \{\cI_{e} \cap [0,1] = \emptyset\} \cap
    \bigcap_{v \in V(G)} \{\cR_{v} \cap [0,1] \neq \emptyset\}
    \Bigr)
    \ge e^{-2d \lambda n} \cdot (1 - e^{-1})^{n} \\
    &\ge \exp\bigl[- \bigl(2d \lambda + 1\bigr) n\bigr],
\end{align*}
using that $1-e^{-1}>e^{-1}$. Taking $\kappa(\lambda, d) :=  2d \lambda + 1$,
the inequality~\eqref{eq:extinction_cp} follows.
\end{proof}

\begin{proof}[Proof of Theorem \ref{teo:extinction_cpde}(i)]
Let $p < p_c(d)$ and $\lambda > 0$ be fixed.  Our goal is to show that for $v$
small, depending on $d$, $p$ and $\lambda$, the CPDE dies out. In order to
apply our renormalization approach, we need to define the sequence of
scales $l_k$ and $h_k$ as in \eqref{e:scales}. Recall that they become
fully determined once we choose the values for $\alpha$, $\beta$, $l_0$ and
$h_0$.  We start by fixing $\alpha=4$. The other values will be determined
next depending on $p$, $\lambda$ and $d$.

Let us write $\delta = \delta(p) := \frac{1}{4}(p_c(d) - p)$ and fix
$\beta = \beta(p, d) \geq 6$ sufficiently large so that
\begin{equation}
\label{eq:beta_delta_choice}
e^{- (\beta/2 - 2) \delta} < \delta.
\end{equation}

Having fixed $\alpha$ and $\beta$ it only remains to chose $l_0$ and $h_0$ suitably.
In the following, we take
\begin{equation}
v := \delta/h_0
\label{e:choice_v}
\end{equation}
so that $v$ will be determined once $h_0$ has been chosen.

For a scale-$k$ box $B_k = [-l_k,l_k]^{d} \times [0,h_k]$ and a hierarchy
$\cH_k \in \cX_k$, label the leaves in $\cH_k$ as $L_1, \ldots, L_{2^{k}}$
in such a way that leaves located higher in time are assigned greater
indices.  Fix some leaf $L_j$ of the form $\pi(L_j) \times [s_j, s_j + h_0]$
(recall the notation $\pi(L_j)$ for its space projection). Also
recall that $\cO_e$ and $\cC_e$ are PPPs whose arrivals represent the times
at which the edge $e$ opens and closes, respectively.  We say that an edge
$e$ with both endvertices in $\pi(L_j)$ is $L_j$-\textit{available} if at
least one of the following conditions is satisfied:
\begin{enumerate}[(i)]
\item $e$ opens during the time interval associated to $L_j$:
    $\cO_e \cap [s_j, s_j+h_0] \neq \emptyset$;
\item $e$ does not update in the time interval of length
    $(\beta/2-2) h_0$ prior to the time interval of $L_j$: \\
    $(\cO_e \cup \cC_e) \cap [s_j- (\beta/2-2) h_0, s_j] = \emptyset$;
\item $e$ updates during $[s_j- (\beta/2-2) h_0, s_j]$,
    and $e$ is open at time $s_j$.
\end{enumerate}
Hence we have
\begin{align}
\PP(\text{$e$ is $L_j$-available})
    &\le (1 - e^{- pv h_0}) + (e^{- v (\beta/2-2) h_0}) +
        \PP(\zeta_{s_j}(e) = 1) \nonumber\\
    &= (1 - e^{- p\delta}) + (e^{- (\beta/2-2)\delta}) + p \nonumber\\
    &\le p\delta + e^{- (\beta/2 - 2) \delta} + p \nonumber\\
\label{eq:prob_available_edge}
    &< \frac{1}{2} (p+p_c(d)),
\end{align}
where we used \eqref{e:choice_v} in the second line and the last inequality is
due to our choices of $\beta$ and $\delta$ in~\eqref{eq:beta_delta_choice}.

Consider the graph whose vertices are sites in $\pi(L_j)$ and whose edges are
those that are $L_j$ available.  Let us call $C_j(x)$ the cluster
containing the vertex $x \in\pi(L_j)$ in this graph. Notice that $C_j(x)$
is either equal to $\{x\}$ or is an open cluster of a Bernoulli bond
percolation process in $\pi(L_j)$  with parameter at most $(p+p_c(d))/2$,
hence subcritical. By the exponential decay of the cluster size
distribution (cf.~\cite[Theorem~(6.75)]{Gri})
\begin{equation}
\label{e:def_psi}
\text{$\exists\, \psi = \psi(p, d)>0$ \quad s.t.\ \quad $\PP(|C_j(x)| \geq m)
    \le e^{- \psi m}\quad \forall m \in \NN$}.
\end{equation}
For each $e=xy$ with both endvertices $x,y \in \pi(L_j)$ and $t \in [s_j, s_j+h_0]$
let us define
\begin{equation}
\label{e:def_zeta_hat}
\hat{\zeta}_{j,t}(e) := \I_{\{\text{$e$ is $L_j$-available}\}}.
\end{equation}
Using the graphical construction in terms of the point processes $\mathcal{O}$,
$\mathcal{C}$, $\mathcal{I}$ and $\mathcal{R}$ we can define inside $L_j$ the
process $\hat{\eta}_{j,t}$ where, the initial configuration is given by
$\hat{\eta}_{j,s_j} (x)= 1$ for every $x \in \pi(L_j)$ and instead of
$\zeta_t(xy)$ one uses $\hat{\zeta}_{j,t}(xy)$ in \eqref{e:def_eta}. Roughly
speaking, replacing $\zeta$ by $\hat{\zeta_j}$  amounts to enlarging the open
clusters at the basis of $L_j$ and then to keep then frozen for time $h_0$.

We say that the leaf $L_j$ is good if the half-crossing event occurs inside
$L_j$ for the process $\hat{\eta}_j$. Let us denote this event by
$\hH(L_j)$. Notice that for different leaves, these events depend on
disjoint regions of the Poisson point processes $\mathcal{O}$, $\mathcal{C}$,
$\mathcal{I}$ and $\mathcal{R}$ so they are independent.

The point in considering these events is that $H(L_j) \subset \hH(L_j)$.
Therefore,
\begin{equation*}
\PP \Big( \medmath{\bigcap\limits_{j=1}^{\smash{2^k}}} H(L_j)\Big)
    \leq \PP \Big( \medmath{\bigcap\limits_{j=1}^{\smash{2^k}}} \hH(L_j)\Big)
    = \PP \big( \hH(L_1)\big)^{2^k},
\end{equation*}
where we have used independence and translation invariance. In view of
\eqref{eq:hierarchy_estimate}, it suffices to prove that $\PP ( \hH(L_1)) <
1/(2 C(d, \alpha, \beta))$ where $C(d, \alpha, \beta)$ has been fixed in Lemma
\ref{lema:cascading_half_cross}. This will be done by suitably choosing $l_0$
and $h_0$.

By \eqref{e:def_psi} the probability of the event
\begin{equation*}
\label{eq:defi_large_clusters}
U_j := \big\{\exists x \in \pi(L_j);\; |C_j(x)| \ge (2d/\psi) \ln l_0\big\}
\end{equation*}
is bounded by
\begin{equation}
\label{eq:bound_large_clusters}
\PP(U_j)
    \le c(d) l_0^{d} \cdot e^{- \psi (2d/\psi) \ln l_0}
    = c(d) l_0^{-d}.
\end{equation}
Since every infection path inside $L_j$ must only jump through $L_j$-available
edges, each of these paths is contained in a cluster $C_j(x)$, that is
typically much smaller than $\pi(L_j)$.

Let us now fix $l_0$ sufficiently large so that
\begin{equation}
\label{e:assumption_l_0}
\frac{2d}{\psi}\ln l_0 < l_0
\quad \text{ and } \quad
l_0 \geq {[4 c(d) C(d, \alpha, \beta)]}^{1/d}.
\end{equation}
Then, estimate~\eqref{eq:bound_large_clusters} implies
\begin{equation}
\label{eq:bound_large_clusters_2}
\mathbb{P}(U_j) \leq \frac{1}{4 C(d, \alpha, \beta)}.
\end{equation}
Moreover, any spatial half-crossing for $\hat{\eta}_j$ inside $L_j$ has to
traverse at least $l_0$ edges. Therefore, the occurrence of such
half-crossings implies the occurrence of $U_j$.

On $U_j^{\comp}$ we know that all of the available clusters in $\pi(L_j)$ are
small, that is, each $C_j(x)$ contains at most $(2d/\psi) \ln l_0$ sites. In
order for a temporal half-crossing to occur the process $\hat{\eta}_j$ must
survive for time at least $h_0/2$ in one of these small clusters.

Let $\nu = \nu(p, \lambda, d, l_0) =\max\{ \psi +\kappa, (2d/\psi) \ln l_0\}$
where $\kappa=\kappa(\lambda, d)$ is given in Lemma \ref{lema:extinction_cp}
and $\psi(p,d)$ is given in \eqref{e:def_psi}. Define
\begin{equation*}
V_j:= \big\{\exists x \in \pi(L_j);\;
    \text{$\hat{\eta}_j$ survives longer than $e^{\nu (2d/\psi) \ln l_0}$
    inside $C_j(x)$}\big\}.
\end{equation*}
By Lemma~\ref{lema:extinction_cp},
\begin{equation}
\label{eq:temp_cross_cpde}
\PP(U^{\comp} \cap V_j)
    \le c(d)l_0^{d} \cdot e^{- (\nu - \kappa) (2d/\psi) \ln l_0}
    = c(d)l_0^{d- (\nu - \kappa) (2d/\psi)}
    \le c(d)l_0^{-d}
    \le \smash{\frac{1}{4 C(d, \alpha, \beta)}}.
\end{equation}

Therefore, uniformly over
\begin{equation}
\label{e:def_h0}
h_0
    \ge 2 \cdot e^{\nu (2d/\psi) \ln l_0}
    = 2 \cdot \smash{l_0^{\nu (2d/\psi)}}
\end{equation}
the following bound holds
\begin{equation}
\label{eq:bound_half_cross_cpde}
\PP(\hH(L_j))
    \le \PP(U) + \PP(U^{\comp} \cap V)
    \le \frac{1}{2 C(d, \alpha, \beta)},
\end{equation}
as it can be seen by just plugging~\eqref{eq:bound_large_clusters}
and~\eqref{eq:temp_cross_cpde}. Thanks to \eqref{e:choice_v} this implies that
any choice of ${v \in (0, \delta \smash{l_0^{-\nu (2d/\psi)}}/2)}$ is sufficient to
establish~\eqref{eq:bound_half_cross_cpde}. This finishes the proof with
$v_0 = l_0^{-\nu (2d/\psi)}/2$.
\end{proof}

\subsection{Proof of Theorem~\ref{teo:extinction_cpde}(ii)}
\label{sub:teo_cpde_ii}

For a bond percolation configuration~$\zeta$ in~$\ZZ^d$ and any connected
subgraph~$B$ of~$\ZZ^d$, we denote by~$G_\zeta(B)$ the random subgraph of~$B$
induced by the open bonds in~$\zeta$. We denote by~$G^*_\zeta(B)$ the connected
component with largest cardinality of~$G_\zeta(B)$ (we can adopt some arbitrary
procedure to decide between components in the case of a tie). The following
result follows from Proposition~3.2 in~\cite{SV} (which in turn is proved using
results from~\cite{CM} and~\cite{P}).

\begin{prop}
\label{prop:schapira}
Assume that~$d \ge 2$,~$p > p_c(\ZZ^d)$ and~$\zeta$ is sampled from the product
Bernoulli($p$) distribution. Then, there exists~$\delta > 0$ such that the
following holds for~$n$ sufficiently large. With probability higher
than~$1-\exp\{-(\log n)^{1+\delta}\}$, the component
$G^*_\zeta(\{0,\ldots,n-1\}^d)$ has  cardinality larger
than~$n^{d-\frac14}$, and all other components of~$G_\zeta(\{0,\ldots,
n-1\}^d)$ have cardinality smaller than~$n^{d-\frac12}$.
\end{prop}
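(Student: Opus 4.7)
The plan is to establish the two parts of the statement separately and combine them by a union bound. The first part---that the largest cluster has cardinality at least $n^{d-\frac14}$---is a consequence of Pisztora-type large-deviation bounds for the density of the giant cluster in a supercritical box, which rest on the slab construction of Grimmett-Marstrand: for $p > p_c(\ZZ^d)$ there exist $c_1, c_2 > 0$ such that
\[
    \PP\Bigl(
        \bigl|G^*_\zeta(\{0,\ldots,n-1\}^d)\bigr|
        \ge \tfrac12\, \theta(p)\, n^d
    \Bigr)
    \ge 1 - c_1 \exp(-c_2\, n^{d-1}),
\]
where $\theta(p) > 0$ is the percolation density. Since $\tfrac12 \theta(p)\, n^d$ is vastly larger than $n^{d-\frac14}$, and the error is stretched-exponential in $n$, this is much stronger than the target bound $\exp(-(\log n)^{1+\delta})$.

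For the second part---that every other component has cardinality smaller than $n^{d-\frac12}$---I would invoke the standard second-largest cluster estimate for supercritical Bernoulli percolation: there exist constants $C, c_3, c_4 > 0$ such that, with probability at least $1 - c_3 \exp(-c_4 (\log n)^{d/(d-1)})$, every connected component of $G_\zeta(\{0,\ldots,n-1\}^d)$ other than $G^*_\zeta$ has cardinality at most $C (\log n)^{d/(d-1)}$. This is by now a classical result (Penrose; also carried out in Cerf-Messikh), obtained by coarse graining the box on a mesoscopic scale and observing that a second cluster of size $m$ would either force a long finite open path or produce two disjoint macroscopic crossings of some mesoscopic sub-block; both events have stretched-exponentially small probability by the exponential decay of finite cluster sizes in the supercritical regime (Chayes-Chayes, Grimmett-Marstrand). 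Since $d \ge 2$ gives $d/(d-1) > 1$, any $\delta \in (0, 1/(d-1))$ makes $(\log n)^{d/(d-1)}$ dominate $(\log n)^{1+\delta}$, and in particular $C (\log n)^{d/(d-1)}$ is much smaller than $n^{d-\frac12}$.

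A union bound on the two good events yields the proposition, after replacing the resulting $2 \exp(-(\log n)^{1+\delta})$ by $\exp(-(\log n)^{1+\delta'})$ for slightly smaller $\delta'$ and $n$ large. The genuine obstacle is the second-largest cluster estimate: its proof requires the non-trivial fact that finite clusters in the supercritical regime have exponentially decaying size (a deep result by now classical) together with a careful mesoscopic coarse-graining argument ruling out two disjoint macroscopic open components. By contrast, the first-part bound, although quantitatively far stronger than needed, is a now-standard consequence of Pisztora's surface-order Peierls argument, and in fact the much weaker ergodic statement $|G^*_\zeta|/n^d \to \theta(p)$ is already close to sufficient for the $n^{d-\frac14}$ lower bound alone.
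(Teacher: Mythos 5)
The paper does not actually prove this proposition: it imports it verbatim from Proposition~3.2 of~\cite{SV}, whose proof rests on the surface-order estimates of~\cite{P} and~\cite{CM}. Your sketch is in substance a reconstruction of that citation's proof, and the outline is sound: a Pisztora-type lower bound on the density of the largest cluster in the box (error $\exp(-cn^{d-1})$, valid for all $p>p_c$ once Grimmett--Marstrand is invoked for $d\ge 3$, and classical for $d=2$), together with control of all remaining clusters, combined by a union bound, gives the statement; and the exponent $(\log n)^{1+\delta}$ in the proposition indeed reflects the $(\log n)^{d/(d-1)}$-type error coming from the second ingredient, just as you say. One justification you give is imprecise, though: finite clusters in the supercritical phase do \emph{not} have exponentially decaying size; the volume tail is stretched exponential, $\PP(k\le |C|<\infty)=\exp(-\Theta(k^{(d-1)/d}))$ (Aizenman--Delyon--Souillard for the lower bound, Kesten--Zhang/Cerf-type coarse graining for the upper bound), while what decays exponentially is the truncated \emph{radius}, $\PP\bigl(0\leftrightarrow \partial B(k),\ |C|<\infty\bigr)\le e^{-ck}$. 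This slip is harmless for the present purpose: to rule out non-giant clusters of cardinality $n^{d-\frac12}$ you may use either the stretched-exponential volume tail or the exponential radius tail, in both cases together with the uniqueness/coarse-graining step you mention (which is what handles the genuine subtlety of pieces of the infinite cluster severed by the box boundary), and the resulting error probabilities are far below $\exp\{-(\log n)^{1+\delta}\}$. Finally, a bibliographic quibble: the result you attribute to ``Cerf--Messikh'' is presumably the Couronné--Messikh input~\cite{CM} used by~\cite{SV} (needed for $d=2$, since Pisztora's paper~\cite{P} is stated for $d\ge 3$).
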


For the rest of this section we fix~$d \ge 2$ and~$\bar{p} > p_c(\ZZ^d)$.
Since~$p \mapsto \lambda_0(v,p)$ is non-increasing, we will
establish~\eqref{eq_sup_vp} once we prove that
\begin{equation}
\label{eq_sup_vp2}
    \sup\left\{\lambda_0(v,\bar{p}): v \ge 0\right\} < \infty.
\end{equation}
Moreover, by Theorem~2.3 in~\cite{LR} we have that~$\lambda_0(v,\bar{p})$
converges to a finite limit as~$v \to \infty$. Hence,~\eqref{eq_sup_vp2} will
follow from showing that
\begin{equation}
\label{eq_sup_vp3}
	\sup\left\{\lambda_0(v,\bar{p}): 0 \le v \le \bar{v}\right\} < \infty
    \qquad \text{for all } \bar{v} > 0.
\end{equation}
Hence, for the rest of this section we fix~$\bar{v} > 0$ and we will prove
that~\eqref{eq_sup_vp3} holds. The dynamic environment~$\{\zeta_t\}_{t\ge 0}$
will have edge density parameter equal to~$\bar{p}$ and edge update speed~$v
\in [0,\bar{v}]$ which will be clear from the context or irrelevant. 

For each~$n \in \NN$ and~$z \in \ZZ$, define
\begin{align*}
    &B_n(z) := zn\vec{e}_1 + \{0,\ldots, n-1\}^d,\\
    &B_n'(z) := B_n(z-1)\cup B_n(z)\cup B_n(z+1).
\end{align*}
Define the event
\begin{equation*}
{E}_n(z,t) := \left\{
    \begin{array}{l}G^*_{\zeta_t}(B_n'(z))
    \text{ is the unique component in~$G_{\zeta_t}(B_n'(z))$ that}\\
    \text{  intersects the three boxes~$B_n(z-1)$,~$B_n(z)$,~$B_n(z+1)$}
    \end{array}\right\}.
\end{equation*}
We then have

\begin{lema}
\label{lem:first_exp_log}
For~$n$ sufficiently large we have, for any~$z \in \ZZ$ and~$t \ge 0$,
\begin{equation*}
\PP({E}_n(z,t)) > 1-(2n+1)\cdot \exp\{-(\log n)^{1+\delta}\},
\end{equation*}
where~$\delta$ is given in Proposition~\ref{prop:schapira}.
\end{lema}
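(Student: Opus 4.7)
The plan is to cover $B_n'(z)$ by $2n+1$ overlapping translates of the unit cube $\{0,\ldots,n-1\}^d$, apply Proposition~\ref{prop:schapira} to each, and argue deterministically on the resulting high-probability event that $E_n(z,t)$ occurs.

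Concretely, set $Q_j := (z-1)n\vec{e}_1 + j\vec{e}_1 + \{0,\ldots,n-1\}^d$ for $j = 0, 1, \ldots, 2n$, so that $Q_0 = B_n(z-1)$, $Q_n = B_n(z)$, $Q_{2n} = B_n(z+1)$, and consecutive cubes differ by a single ``slab'' $Q_j \setminus Q_{j+1}$ of $n^{d-1}$ vertices. Since $\zeta_t$ is marginally product-Bernoulli with parameter $\bar{p} > p_c(\ZZ^d)$, applying Proposition~\ref{prop:schapira} to each $Q_j$ and taking a union bound yields: with probability at least $1 - (2n+1)\exp\{-(\log n)^{1+\delta}\}$, every $G_{\zeta_t}(Q_j)$ has a unique giant component $C^*_j$ with $|C^*_j| > n^{d-\frac14}$ and every other component of size less than $n^{d-\frac12}$. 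Call this event $\mathcal{G}$.

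On $\mathcal{G}$, I would show that $E_n(z,t)$ holds. The core step is: for each $j$, the consecutive giants $C^*_j$ and $C^*_{j+1}$ belong to the same component of $G_{\zeta_t}(B_n'(z))$. Iterating over $j$ places $C^*_0, C^*_n, C^*_{2n}$ into a common component $\tilde{C}$, which plainly meets all three sub-boxes. The quantitative input is the lower bound $|C^*_j \cap Q_{j+1}| \ge |C^*_j| - n^{d-1} > n^{d-\frac14} - n^{d-1}$, which for $d \ge 2$ greatly exceeds the non-giant threshold $n^{d-\frac12}$: this large set cannot be absorbed entirely by non-giant components of $G_{\zeta_t}(Q_{j+1})$. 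For the uniqueness clause of $E_n(z,t)$, any alternative component $D$ intersecting all three sub-boxes would be forced, cube-by-cube, into the union of non-giants of each $G_{\zeta_t}(Q_j)$; combined with the fact that $D$'s first-coordinate projection is an interval of length at least $n$ (as a nearest-neighbor connected subset of $\ZZ^d$), a counting argument across the $2n+1$ overlapping cubes rules out such $D$, and $\tilde{C}$ is then both the unique such component and automatically the largest.

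The delicate point is rigorously showing that $C^*_j$ and $C^*_{j+1}$ actually share a vertex. Although $C^*_j \cap Q_{j+1}$ is connected in $G_{\zeta_t}(Q_j)$, removing the boundary slab of $n^{d-1}$ vertices may fragment it into up to $O(n^{d-1})$ pieces in $G_{\zeta_t}(Q_{j+1})$, and naive pigeonhole produces a piece of size only $n^{3/4}$, which falls below $n^{d-\frac12}$ for $d \ge 2$. The cleanest workaround is to apply Proposition~\ref{prop:schapira} additionally to $2n$ auxiliary cubes of side $n+1$, each containing a consecutive pair $Q_j \cup Q_{j+1}$: since both $C^*_j$ and $C^*_{j+1}$ have cardinality $> n^{d-\frac14} > (n+1)^{d-\frac12}$ and each is connected inside the larger cube, each must lie inside its unique giant, hence together in the same component of $G_{\zeta_t}(B_n'(z))$. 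The union-bound constant then grows from $2n+1$ to $4n+1$, which is readily absorbed into the stated bound since the factor $\exp\{-(\log n)^{1+\delta}\}$ decays faster than any polynomial.
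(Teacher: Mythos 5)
Your first step (the $2n+1$ unit translates of the cube, Proposition~\ref{prop:schapira} for each, union bound) is exactly the paper's; the difference is that you try to spell out the deterministic implication that the paper dispatches in one line, and the two patches you propose do not close it. Concerning connectivity of consecutive giants: a cube of side $n+1$ containing $Q_j\cup Q_{j+1}$ cannot be contained in $B_n'(z)$, whose width in the coordinates $2,\ldots,d$ is exactly $n$. Your argument places $C^*_j$ and $C^*_{j+1}$ inside the giant of that auxiliary cube, but the connecting paths may run through the extra layer outside $B_n'(z)$, while $E_n(z,t)$ is a statement about components of the graph induced on $B_n'(z)$; so ``hence together in the same component of $G_{\zeta_t}(B_n'(z))$'' does not follow. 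This particular gap is repairable: apply the proposition instead to a cube of side $n-1$ contained in the overlap $Q_j\cap Q_{j+1}$ (hence in $B_n'(z)$); its giant has more than $(n-1)^{d-\frac14}>n^{d-\frac12}$ vertices, so it must lie inside both $C^*_j$ and $C^*_{j+1}$, which therefore meet inside $B_n'(z)$. Note also that your union bound then has prefactor $4n+1$, which is not literally ``absorbed'' into the stated $(2n+1)\exp\{-(\log n)^{1+\delta}\}$; this is harmless for how the lemma is used, but it is not the bound as stated.

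The genuine gap is the uniqueness clause. The ``counting argument across the $2n+1$ overlapping cubes'' is not given, and no argument using only the cardinality events can exist, because the desired deterministic inclusion is false at that level of information. For $d=2$, take the configuration in which all edges with both endpoints in rows $10,\ldots,n-1$ of $B_n'(z)$ are open, all horizontal edges of row $0$ are open, and every other edge is closed: each window $Q_j$ (and each auxiliary box) has a giant of size of order $n^2>n^{d-\frac14}$ and all its other components have size at most $n+1<n^{d-\frac12}$, so all your events hold; yet row $0$ is a second component of $G_{\zeta_t}(B_n'(z))$ meeting $B_n(z-1)$, $B_n(z)$ and $B_n(z+1)$, so $E_n(z,t)$ fails. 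A thin crossing cluster is compatible with every size bound at your disposal, so ruling it out requires a qualitatively stronger input with the same super-polynomial error probability (for instance, that the giant is the unique cluster crossing a side-$n$ box in the $\vec{e}_1$-direction, or the unique cluster of diameter of order $n$); the same remark applies to your unsupported claim that $\tilde{C}$ is ``automatically the largest''. The paper itself compresses this step into ``it is easy to see'', so your difficulty is a real one — but as written the proposal does not establish the uniqueness part, and hence does not prove the lemma.
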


\begin{proof}
By translation invariance and stationarity, it suffices to prove the statement
with~$z = 0$ and~$t = 0$. For each box~$B \subset B_n'(0)$ of the form
\begin{equation*}
B = \{u,\ldots, u+n-1\}\times \{0,\ldots, n-1\}^d,\quad u \in \{-n,\ldots,n\},
\end{equation*}
let~$A(B)$ denote the event that~$G^*_{\zeta_0}(B)$ has
cardinality larger than~$n^{d-\frac{1}{4}}$, and all other components
of~$G_{\zeta_0}(B)$ have cardinality smaller than~$n^{d-\frac12}$. It is
easy to see that~$\cap_B A(B) \subset {E}_n(0,0)$. Since there are~$2n +1$
such boxes, Proposition~\ref{prop:schapira} and a union bound guarantee
that~$\PP(\cap_B A(B)) > 1-(2n+1)e^{-(\log n)^{1+\delta}}$.
\end{proof}

We now define the event
\begin{equation*}
E_n'(z,t)
    = \bigcap_{s \in [t,t+1]} E_n(z,s),
    \qquad n \in \NN,\;z \in \ZZ,\;t \ge 0.
\end{equation*}
We then have
\begin{lema}
\label{lem:integral}
For any~$\varepsilon > 0$ there exists~$n_0$ such that if~$n \ge n_0$ we have
$\PP(E_n'(z,t))> 1-\varepsilon$ for all~$v \in [0,\bar{v}]$,~$z\in
\ZZ$ and~$t\ge 0$.
\end{lema}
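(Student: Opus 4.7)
The plan is to exploit the piecewise constant nature of $\zeta_s$ in $s$: for a given $n$, the restriction of $\zeta_s$ to the edges of $B_n'(0)$ only changes at finitely many times in $[0,1]$, so the uncountable intersection defining $E_n'(0,0)$ reduces to an intersection over those update times, after which I would apply Lemma~\ref{lem:first_exp_log} via a union bound.

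By spatial translation invariance and temporal stationarity of $\{\zeta_s\}$, it suffices to handle $z=0$ and $t=0$. Let $N_n$ denote the number of updates to edges of $B_n'(0)$ during $(0,1]$, and enumerate them as $s_1 < \cdots < s_{N_n}$; set $s_0 := 0$. Since $E_n(0,s)$ depends on $\zeta_s$ only through its values on the edges of $B_n'(0)$, we have
\begin{equation*}
E_n'(0,0) = \bigcap_{i=0}^{N_n} E_n(0, s_i).
\end{equation*}
Next, observe that conditionally on the entire collection of update Poisson processes, the initial edge values and the fresh values posted at each update are i.i.d.\ Bernoulli$(\bar p)$; in particular, $\zeta_{s_i}$ is product Bernoulli$(\bar p)$ for every $i$. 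Hence Lemma~\ref{lem:first_exp_log} gives $\PP(E_n(0, s_i)^{\comp}) \le p_n := (2n+1)\exp\{-(\log n)^{1+\delta}\}$ for large $n$, and a union bound yields
\begin{equation*}
\PP(E_n'(0,0)^{\comp}) \le (1+\EE[N_n])\, p_n.
\end{equation*}

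Finally, since $B_n'(0)$ has at most $3dn^d$ edges and each updates at rate $v \le \bar v$, we obtain $\EE[N_n] \le 3d\bar v\, n^d$, so
\begin{equation*}
\PP(E_n'(0,0)^{\comp}) \le (1 + 3d\bar v\, n^d)(2n+1)\exp\{-(\log n)^{1+\delta}\},
\end{equation*}
which tends to $0$ as $n \to \infty$ because $\exp\{-(\log n)^{1+\delta}\}$ decays faster than any polynomial in $n$. Any $n_0$ large enough to make the right-hand side below $\varepsilon$ then works uniformly in $v \in [0,\bar v]$, $z \in \ZZ$ and $t \ge 0$. The one subtle point is the claim that $\zeta_{s_i}$ is distributed as product Bernoulli$(\bar p)$ at each random update time; this becomes transparent upon conditioning on the Poisson update processes, after which the construction exhibits $\zeta_{s_i}$ as a specified family of i.i.d.\ Bernoulli$(\bar p)$ variables.
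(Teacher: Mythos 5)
Your proof is correct, but it follows a genuinely different route from the paper's. You discretize: since $\zeta_s$ restricted to the edges of $B_n'(z)$ is piecewise constant, $E_n'(z,t)$ is the intersection of $E_n(z,\cdot)$ over the finitely many update times in $[t,t+1]$, and you then apply the fixed-time estimate at these random times, justified by the observation that, conditionally on the update-time Poisson processes, the box configuration at each update time is still product Bernoulli($\bar p$) (marks and initial states are i.i.d.\ Bernoulli($\bar p$) independent of the update times). The paper instead avoids any distributional claim at random times: on $(E_n'(z,t))^\comp$ it considers the first failure time $\tau$, notes that with probability at least $e^{-1}$ no edge of the box updates during $(\tau,\tau+(C_dvn^d)^{-1}]$ so the failure persists for that duration, converts this via the strong Markov property into a lower bound on the expected occupation time $\int_t^{t+2}\I_{(E_n(z,s))^\comp}\,\mathrm{d}s$, and bounds that expectation by Fubini and Lemma~\ref{lem:first_exp_log}. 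Both yield a bound of the same order, roughly $v n^d\,(2n+1)\exp\{-(\log n)^{1+\delta}\}$, uniform in $v\in[0,\bar v]$. Your argument is more direct once the equilibrium-at-update-times fact is accepted; note only that the step $\PP(E_n'(0,0)^{\comp})\le(1+\EE[N_n])p_n$ should be read as a conditional union bound given the update-time processes, and that you are using not the literal statement of Lemma~\ref{lem:first_exp_log} (which concerns deterministic times) but the fact that its bound is really a bound for the bad configuration event under the product Bernoulli($\bar p$) law, which is exactly what your conditioning provides. The paper's occupation-time trick is more robust in that it needs only the fixed-time estimate, stationarity, and a bound on the total update rate, without exploiting the independence of marks from update times.
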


\begin{proof}
Fix~$v,z,t$ as in the statement. Let~$\cT \subset [0,\infty)$ denote the set of
update times of the edges of~$B_n'(z)$. Then,~$\cT$ is a Poisson point
process whose intensity is smaller than to~$C_dvn^d$, for some constant~$C_d > 0$.
In particular, for any~$s \ge 0$ we have
\begin{equation*}
\PP(\cT \cap [s,s+(C_dvn^d)^{-1}] = \varnothing)
    \ge \PP(\text{Poisson}(1) = 0)
    = e^{-1}.
\end{equation*}
On the event~$(E_n'(z,t))^c$, let~$\tau$ denote the smallest~$s \in [t,t+1]$
such that $(E_n(z,s))^c$ occurs. Letting~$A = (E_n'(z,t))^c \cap \{\cT\cap
(\tau,\tau+(C_dvn^d)^{-1}] = \varnothing\}$, we have, by the strong Markov
property,
\begin{equation*}
\PP(A) \ge \PP((E_n'(z,t))^c) \cdot e^{-1}.
\end{equation*}
Now, noting that
\begin{align*}
(C_dvn^d)^{-1}\cdot \I_A \le \int_t^{t+2}\I_{(E_n(z,s))^c}\;\mathrm{d}s
\end{align*}
and taking expectations, we obtain
\begin{align*}
(C_dvn^{d})^{-1}\cdot e^{-1}\cdot \PP((E_n'(z,t))^c)
    &\le \EE\left[\int_t^{t+2} \I_{(E_n(z,s))^c}\;\mathrm{d}s \right] \\
    &\le 2\cdot (2n+1)\cdot \exp\{-(\log n)^{1+\delta}\},
\end{align*}
where the last inequality follows from Lemma~\ref{lem:first_exp_log} and
Fubini's theorem. We thus have
\begin{equation*}
\PP((E_n'(z,t))^c)
    \le 2eC_dvn^d(2n+1)\cdot \exp\{-(\log n )^{1+\delta}\},
\end{equation*}
and the right-hand side can be made as small as desired by taking~$n$ large,
uniformly in~$v \in [0,\bar{v}]$.
\end{proof}

We now give some further definitions. A finite sequence~$\gamma=(x_0,\ldots,
x_m)$ of vertices of~$\ZZ^d$ is called a self-avoiding path
if~$x_0,\ldots,x_m$ are all distinct and~$x_i \sim x_{i+1}$ for each~$i$. For
such a sequence~$\gamma$ and~$t>s\ge 0$, we let~$\Phi(\gamma,s,t)$ denote the
indicator function of the event that, in the graphical construction of the
process, there exist times~$s < t_1<\ldots < t_m < t$ such that, for each~$i$,
there is a transmission mark in the edge~$\{x_{i-1},x_{i}\}$ at time~$t_i$. We
emphasize that this definition does not involve the recovery marks or the edge
percolation environment, but only the transmission marks.

For a finite connected subgraph~$B$ of~$\ZZ^d$, let~$\Gamma_B$ denote the
(finite) set of all self-avoiding paths contained in~$B$.
\begin{lema}
\label{lem:fast_infection}
Let~$t> 0$ and~$B$ be a finite connected subgraph of~$\ZZ^d$. Then, for
any~$\varepsilon > 0$, we can take~$\lambda$ large enough so that
\begin{equation*}
\PP(\Phi(\gamma,s,s+t)
    = 1\text{ for all }\gamma \in \Gamma_B) > 1-\varepsilon
    \quad \text{for all } s \ge 0.
\end{equation*}
\end{lema}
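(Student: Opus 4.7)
\medskip
\noindent\textbf{Proof plan for Lemma \ref{lem:fast_infection}.}

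By time-stationarity of the Poisson point processes governing transmission marks, the law of the transmission marks on the edges of $B$ restricted to $[s, s+t]$ does not depend on $s \ge 0$. Hence it suffices to prove the estimate for $s = 0$, and the uniformity in $s$ comes for free.

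The plan is to force, with high probability, a transmission mark on every edge of $B$ inside every slot of a sufficiently fine time partition. Concretely, let $N := |V(B)|$, so that every self-avoiding path $\gamma = (x_0, \ldots, x_m) \in \Gamma_B$ satisfies $m \le N - 1$. Partition $[0,t]$ into $N$ consecutive subintervals
\begin{equation*}
I_i := \left[\tfrac{(i-1)t}{N},\; \tfrac{it}{N}\right],
\qquad i = 1, \ldots, N,
\end{equation*}
and consider the event
\begin{equation*}
\mathcal{G} := \bigcap_{e \in E(B)} \bigcap_{i=1}^{N}
    \{\cI_e \cap I_i \neq \varnothing\}.
\end{equation*}
On $\mathcal{G}$, for any self-avoiding $\gamma = (x_0, \ldots, x_m) \in \Gamma_B$ we may pick $t_i \in I_i$ to be a transmission mark on the edge $\{x_{i-1}, x_i\}$, for each $i = 1, \ldots, m$. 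Since $I_1, \ldots, I_m$ are successive, we have $t_1 < t_2 < \cdots < t_m$, so $\Phi(\gamma, 0, t) = 1$. Thus $\mathcal{G} \subset \{\Phi(\gamma, 0, t) = 1 \text{ for all } \gamma \in \Gamma_B\}$.

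It remains to bound $\PP(\mathcal{G}^{\comp})$. Since each $\cI_e$ is a PPP of rate $\lambda$ and the events $\{\cI_e \cap I_i = \varnothing\}$ are independent across the pairs $(e,i)$, a union bound gives
\begin{equation*}
\PP(\mathcal{G}^{\comp})
    \le |E(B)| \cdot N \cdot \exp\!\left(-\lambda t/N\right),
\end{equation*}
which tends to $0$ as $\lambda \to \infty$. Choosing $\lambda$ large enough to make the right-hand side smaller than $\varepsilon$ completes the proof. There is no real obstacle here: the argument is a routine partition-plus-union-bound, and the only point worth flagging is the cheap but crucial observation that using disjoint time slots for successive edges automatically produces the increasing sequence of transmission times required by the definition of $\Phi$.
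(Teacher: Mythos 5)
Your proof is correct and follows essentially the same route as the paper: partition $[0,t]$ into finitely many time slots (you use $|V(B)|$ slots, the paper uses $|\Gamma_B|$), demand a transmission mark on every edge of $B$ in every slot, and let $\lambda \to \infty$; the strict ordering of the times is automatic from the disjoint slots. The only cosmetic point is that your slots share endpoints, so to get strict inequalities you should pick marks in the interiors (or note that almost surely no mark falls exactly on a partition point), exactly as the paper does by taking sub-intervals with disjoint interiors.
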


\begin{proof}
By translation invariance, it suffices to treat~$s = 0$. We divide~$[0,t]$
into~$|\Gamma_B|$ sub-intervals of equal lengths and disjoint interiors.
The event in question is achieved if each edge of~$B$ has a transmission
mark in the interior of each of these sub-intervals. This has probability
as high as desired when~$\lambda \to \infty$.
\end{proof}

We now define a further event~$F_n(z,t)$ for~$n \in \NN$,~$z \in \ZZ$ and~$t
\ge 0$, as follows. Let~$t_1 < t_2 < \cdots < t_N$ denote, in increasing order,
the (random) times within the time interval~$[t,t+1]$ at which there is either
an edge  update or a recovery mark inside~$B_n'(z)$. Also let~$t_0 = t$
and~$t_{N+1} = t+1$. Then,~$F_n(z,t)$ is defined as the event that
\begin{equation*}
\Phi(\gamma,t_i,t_i+1) = 1
    \text{ for all }\gamma \in \Gamma_{B'_n(z)}
    \text{ and all }i \in \{0,\ldots, N\}.
\end{equation*}
In words, this is the event that, between two successive times~$t_i,t_{i+1}$
(each of which can correspond to a recovery mark or an edge update
inside~$B_n'(z)$), every self-avoiding path inside~$B_n'(z)$ can be traversed
by following transmissions. This guarantees that, if at time~$t_i$ one of the
vertices of~$G^*_{\zeta_{t_i}}(B_n'(z))$ is infected, then immediately before
time~$t_{i+1}$ \textit{all} vertices of this cluster will be infected.

\begin{lema}
\label{lem:in_between}
For any~$n \in \NN$ and~$\varepsilon > 0$, there exists~$\lambda' > 0$ such that
\begin{equation*}
\PP(F_n(z,t)) > 1-\varepsilon
    \qquad \text{ for any }\lambda > \lambda',
    \;v \in [0,\bar{v}],\; z \in \ZZ^d,\text{ and }t \ge 0.
\end{equation*} 
\end{lema}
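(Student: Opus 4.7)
The plan is to decouple the random sequence $t_1,\ldots,t_N$ (determined entirely by the edge-update and recovery marks) from the transmission marks, and then to invoke Lemma~\ref{lem:fast_infection} uniformly in the random starting times. The graphical construction uses mutually independent Poisson families $\{\cO_e\}$, $\{\cC_e\}$, $\{\cI_e\}$, $\{\cR_x\}$, so the transmissions $\{\cI_e\}$ are independent of the update and recovery processes, and hence of the whole random vector $(t_0,\ldots,t_{N+1})$. This independence is the key structural ingredient.

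First I would control $N$. The box $B_n'(z)$ contains at most $3n^d$ vertices and at most $c_d n^d$ edges (for some dimensional constant $c_d$). Since each edge updates at rate $v\le\bar v$ and each vertex carries a recovery clock of rate $1$, the total rate of marks contributing to $\{t_i\}$ is at most $M_0=M_0(n,d,\bar v):=c_d\bar v n^d+3n^d$, uniformly in $v\in[0,\bar v]$. Stationarity and translation invariance then yield $\EE[N]\le M_0$ uniformly in $v\in[0,\bar v]$, $z\in\ZZ$ and $t\ge 0$.

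Next, I would apply Lemma~\ref{lem:fast_infection} to $B=B_n'(z)$ with time length $1$ to obtain, for any $\varepsilon'>0$, the existence of $\lambda^\star(\varepsilon',n)>0$ such that for every \emph{deterministic} $s\ge 0$,
\begin{equation*}
    p_1(\lambda)\;:=\;\PP\bigl(\exists\,\gamma\in\Gamma_{B_n'(z)}:\;\Phi(\gamma,s,s+1)=0\bigr)\;\le\;\varepsilon'\qquad\text{whenever }\lambda>\lambda^\star(\varepsilon',n),
\end{equation*}
with $p_1(\lambda)$ independent of $s$ by time-stationarity of the transmission processes. Conditioning on the (discrete) sequence $(t_i)$ and taking a union bound over $i\in\{0,\ldots,N\}$, using the independence noted above, gives
\begin{equation*}
    \PP(F_n(z,t)^{\comp})\;\le\;\EE\bigl[(N+1)\,p_1(\lambda)\bigr]\;\le\;(M_0+1)\,p_1(\lambda).
\end{equation*}
It then suffices to choose $\varepsilon'<\varepsilon/(M_0+1)$ and set $\lambda':=\lambda^\star(\varepsilon',n)$.

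I do not expect a serious obstacle here: the whole argument reduces to (a) the uniform moment bound $\EE[N]\le M_0$, which is immediate from the graphical construction and monotonicity in $v$, and (b) the independence of transmissions and environment/recovery marks, which is inherent to the Poissonian setup and makes the conditioning trivial. The one point to watch is that $\lambda'$ is allowed to depend on $n$ (through the size of $\Gamma_{B_n'(z)}$ entering Lemma~\ref{lem:fast_infection}), but not on $v,z,t$; both requirements are honored by the argument above.
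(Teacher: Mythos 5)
There is a genuine gap, and it stems from how you read the definition of $F_n(z,t)$. The event requires $\Phi(\gamma,t_i,t_{i+1})=1$ for every self-avoiding path $\gamma$ and every pair of \emph{consecutive} marks $t_i,t_{i+1}$: this is what the verbal description says (``between two successive times $t_i,t_{i+1}$''), it is what Claim~\ref{cl:infinite_path} needs (only on such an interval are the environment and the recovery marks frozen, so that full infection of the cluster before the next mark can be deduced), and it is also forced by the later claim that $\eta_k(z)$ and $\eta_{k'}(z)$ are independent for $k\neq k'$, which would fail if $F_n(z,t)$ examined transmission marks in windows $[t_i,t_i+1]$ spilling past time $t+1$. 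Your argument instead bounds the probability that every $\gamma$ is traversed in the unit-length windows $[t_i,t_i+1]$. Since $t_{i+1}\le t_i+1$, that event is implied by $F_n(z,t)$ but does not imply it, so the inequality $\PP(F_n(z,t)^{\comp})\le\EE[N+1]\,p_1(\lambda)$ does not follow: the relevant windows $[t_i,t_{i+1}]$ have random lengths that can be arbitrarily small, and Lemma~\ref{lem:fast_infection} at a fixed time length gives no control over them --- for any fixed $\lambda$, a gap of length much smaller than $1/\lambda$ carries no transmission marks at all with probability close to one.

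The missing step, which is the heart of the paper's proof, is a preliminary estimate on the spacings, uniform in $v\in[0,\bar v]$: setting $X:=\inf_i|t_{i+1}-t_i|$, one first finds $\delta>0$ (depending on $n$, $\bar v$, $\varepsilon$) such that $\PP(X>\delta,\;N<1/\delta)>1-\varepsilon/2$; this is possible because the marks generating the $t_i$ form a Poisson process of intensity at most of order $\bar v n^d+n^d$. Conditionally on this event --- which is measurable with respect to the update and recovery marks, hence independent of the transmissions, the independence you correctly isolate --- one applies Lemma~\ref{lem:fast_infection} with time length $\delta$ and a union bound over the at most $1/\delta+1$ intervals, choosing $\lambda'$ so that the conditional failure probability is below $\varepsilon/2$. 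Your moment bound $\EE[N]\le M_0$ and the decoupling of transmissions from the environment/recovery marks are fine as far as they go, but without a lower bound on the gaps the argument does not prove the lemma as stated.
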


\begin{proof}
Fix~$n \in \mathbb{N}$ and~$\varepsilon > 0$. By translation invariance, it is
sufficient to prove that there exists~$\lambda' > 0$ such
that~$\PP(F_n(0,0)) > 1-\varepsilon$ for any~$\lambda > \lambda'$
and~$v \in [0,\bar{v}]$. Let~$t_1<\cdots < t_N$ denote the times in~$[0,1]$
at which there is either an edge update or recovery mark inside~$B_n'(0)$,
and let~$t_0 = 0$ and~$t_{N+1}= 1$. Let
\begin{equation*}
X:=\inf\{|t_{i+1}-t_i|:\;i\in\{0,\ldots, N\}\}.
\end{equation*}

It is easy to see that there exists~$\delta > 0$ such that
\begin{equation*}
\PP(X > \delta,\; N < 1/\delta) > 1-\frac{\varepsilon}{2}
    \quad \text{for any } v \in [0,\bar{v}].
\end{equation*}
Now, by Lemma~\ref{lem:fast_infection} and a union bound, we can
obtain~$\lambda'>0$ such that, for any~$\lambda > \lambda'$ and
any~$v \in [0,\bar{v}]$,
\begin{equation*}
\PP(F_n(0,0)\mid X > \delta,\; N < 1/\delta) > 1-\frac{\varepsilon}{2},
\end{equation*}
completing the proof.
\end{proof}

Finally, define
\begin{equation*}
\eta_k(z)
    := \I[{E_n'(z,k)\cap F_n(z,k)}],\qquad z \in \ZZ,\;k\in\NN_0.
\end{equation*}
It will be useful to note the following:
\begin{claim}
\label{cl:infinite_path}
If~$\eta_k(z) = 1$ and at least one site of~$G^*_{\zeta_k}(B_n'(z))$ is
infected at time~$k$, then  all sites  of~$G^*_{\zeta_{k+1}}(B_n'(z))$ are
infected at time~$k+1$.
\end{claim}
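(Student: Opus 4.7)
The plan is to enumerate the random event times $t_0=k<t_1<\cdots<t_N<t_{N+1}=k+1$ of $B_n'(z)$ on $[k,k+1]$ (edge updates of $B_n'(z)$ together with recovery marks inside $B_n'(z)$) and run an induction on $i$, whose invariant is that at time $t_i$ the current giant cluster $\cG_i:=G^*_{\zeta_{t_i}}(B_n'(z))$ contains at least one infected site. The base case $i=0$ is the hypothesis of the claim, and a final spreading argument on $[t_N,k+1]$ will upgrade the conclusion from ``some vertex of $\cG_{k+1}$ infected'' to ``every vertex of $\cG_{k+1}$ infected at time $k+1$'', which is what the claim asserts.

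The spreading step across each open interval $(t_i,t_{i+1})$ is the easy ingredient. By construction the environment is frozen there ($\cG_s=\cG_i$ for $s\in[t_i,t_{i+1})$), no recovery mark of $B_n'(z)$ fires, and every edge of $\cG_i$ stays open. Given a single infected vertex $x\in\cG_i$ at time $t_i$, event $F_n(z,k)$ supplies, for each $y\in\cG_i$, an in-order traversal of some self-avoiding path $\gamma\subset\cG_i$ from $x$ to $y$ by transmission marks lying in the gap $[t_i,t_{i+1}]$. Since the edges of $\gamma$ stay open and no recovery interferes, the infection propagates along $\gamma$, so immediately before $t_{i+1}$ the \emph{entire} cluster $\cG_i$ is infected.

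The main obstacle I expect is the transition at $t_{i+1}$: I need to rule out that $\cG_{i+1}$ becomes ``disjoint'' from $\cG_i$ in a way that would kill the induction. I would handle this by a case split powered by the uniqueness clause of $E_n'(z,k)$. If $t_{i+1}$ is a recovery mark, then $\cG_{i+1}=\cG_i$ loses a single site, and because $\cG_i$ meets all three sub-boxes $B_n(z-1), B_n(z), B_n(z+1)$ it has at least three vertices, so an infected one survives. If an edge opens, only merging can occur, so $\cG_i\subseteq\cG_{i+1}$ and the full infection of $\cG_i$ is inherited. If an edge closes, each cluster of the new environment is contained in some cluster of the old; applying the uniqueness of the component intersecting $B_n(z-1), B_n(z), B_n(z+1)$ both before and after the update, guaranteed by $E_n'(z,k)$, forces $\cG_{i+1}\subseteq\cG_i$, so $\cG_{i+1}$ is still fully infected. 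This closes the induction, and a final application of the spreading step on $[t_N,k+1]$ yields the claim.
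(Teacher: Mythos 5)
Your proposal is correct and follows essentially the same route as the paper's proof: an induction over the event times $t_1<\cdots<t_N$ in $[k,k+1]$, using $F_n(z,k)$ to fully infect the current giant cluster between consecutive events and the uniqueness clause of $E_n'(z,k)$ to guarantee an infected site persists in the giant cluster across each edge update or recovery mark. Your explicit three-way case split (recovery, edge opening, edge closing) is simply a more detailed rendering of the paper's statement that the giants before and after each event share a component meeting the three boxes.
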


\begin{proof}
Let~$t_1 < t_2 < \cdots < t_N$ denote  the times within~$[k,k+1]$ at which
there is either an edge  update or a recovery mark inside~$B_n'(z)$. The
definition of~$F_n(z,k)$ guarantees that, in~$[k,t_1)$, the
component~$G^*_{\zeta_k}(B_n'(z))$ becomes fully infected. The definition
of~$E'_n(z,k)$ guarantees that the components~$G^*_{\zeta_{t_1-}}(B_n'(z))$
and $G^*_{\zeta_{t_1}}(B_n'(z))$ have in common a component that intersects
the three boxes~$B_{n}(z-1)$,~$B_n(z)$ and~$B_n(z+1)$. In particular, at
least one infection remains in~$G^*_{\zeta_{t_1}}(B_n'(z))$. Proceeding
recursively, we obtain the result.
\end{proof}

\begin{proof}[Proof of Theorem~\ref{teo:extinction_cpde}(ii)]
Fix~$\varepsilon > 0$. Assume that~$n$ is large enough, as required by
Lemma~\ref{lem:integral}, and then assume that~$\lambda$ is large enough,
as required by Lemma~\ref{lem:in_between}. These choices guarantee
that~$\PP(\eta_k(z) =1) > 1-2\varepsilon$ for all~$\lambda > \lambda'$,~$v
\in [0,\bar{v}]$,~$k\in \mathbb{N}$ and~$z\in\ZZ$. We also have that
if~$(k,z)$ and~$(k',z')$ have either~$k\neq k'$ or~$|z-z'| > 2$,
then~$\eta_k(z)$ and~$\eta_{k'}(z')$ are independent. Hence,~$(\eta_k(z))$
dominates a one-dependent Bernoulli  field with density
above~$1-2\varepsilon$. If~$\varepsilon$ is sufficiently small, then with
positive probability there is an infinite sequence~$0=z_0,z_1,\ldots \in
\ZZ$ such that~$|z_k - z_{k+1}|\le 1$ and~$\eta_k(z_k) = 1$ for every~$k$.
By Claim~\ref{cl:infinite_path}, we obtain that there is an infinite
infection path contained in the space-time set~$\cup_k (B_n'(\eta(k))
\times [k,k+1])$. This proves that~$\PP(B_n'(0) \rightsquigarrow \infty) >
0$; since
\begin{equation*}
\PP(B_n'(0) \rightsquigarrow \infty)
    \le |B_n'(0)|\cdot \PP((0,0)\rightsquigarrow \infty),
\end{equation*}
it follows that~$\PP((0,0)\rightsquigarrow \infty) > 0$.
\end{proof}

\section{Edge Renewal Contact Process}
\label{sec:ercp}




\subsection{Uniform control for renewals}
\label{sub:uniform_control_for_renewals}

Our study of ERCP is based on a uniform control for the probability of having
renewal marks in an interval of fixed length. The next lemma summarizes
inequalities that achieve this goal. These estimates are in the core of all
subsequent computations and justify our hypotheses on $\mu$ and $\nu$.

\begin{lema}[Uniform estimates]
\label{lema:unif_estimates}
Let $\mu$ be any probability distribution on $\RR_+$ and let $\cR$ be a renewal
process with interarrival $\mu$ started from some $\tau \le 0$.
\begin{enumerate}[(i)]
\item If $f: [0, \infty) \to [0,\infty)$ is non-decreasing,
    $\lim_{x \to \infty}f(x)= \infty$, and
    ${\int x f(x) \,\mu(\mathrm{d}x) < \infty}$, then uniformly on $\tau$ we have
    \begin{equation}
    \label{eq:moment_condition}
    \sup_{t \geq 0} \PP( \cR \cap [t, t + h] = \emptyset)
        \le \frac{C}{f(h)},
    \end{equation}
    for some positive constant $C = C(\mu, f)$ whenever $f(h) > 0$. Moreover,
    if $\int x \,\mu(\mathrm{d}x) < \infty$ then given any $\varepsilon > 0$
    there is $h_0=h_0(\varepsilon) > 0$ such that uniformly on $\tau$, we have
    \begin{equation}
    \label{eq:finite_1st_moment}
    \sup_{t \geq 0} \PP( \cR \cap [t, t + h_0] = \emptyset)
        \le \varepsilon.
    \end{equation}

\item If $\mu$ is continuous, then given $\varepsilon>0$ there is
    $w_0=w_0(\varepsilon) > 0$ such that uniformly on $\tau $  we have
    \begin{equation}
    \label{eq:uniform_cont}
    \sup_{t \geq 0} \PP( \cR \cap [t, t + w_0]\neq \emptyset)
        \le  \varepsilon.
    \end{equation}
\end{enumerate}
\end{lema}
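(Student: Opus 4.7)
The plan is to reduce both statements to estimates on the ordinary (non-delayed) renewal process $\tilde S_j := X_1 + \cdots + X_j$ with $X_i$ i.i.d.\ $\mu$. Setting $s := t - \tau \ge 0$, the marks of $\cR$ in $[t,t+h]$ correspond to marks of $(\tilde S_j)_{j\ge 1}$ in $[s,s+h]$, so if $\tilde R_s$ denotes the residual lifetime at $s$, one has $\{\cR\cap[t,t+h]=\emptyset\}=\{\tilde R_s>h\}$ and $\{\cR\cap[t,t+w]\neq\emptyset\}=\{\tilde R_s\le w\}$. The goal becomes to bound these events uniformly in $s\ge 0$, which automatically handles the supremum over $\tau$.

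For (i), I would decompose $\{\tilde R_s>h\}$ on the index of the current renewal interval, obtaining
\begin{equation*}
\PP(\tilde R_s>h) \;\le\; \bar F(\max(s,h)) + \int_0^s \bar F(\max(h,s-u))\,U_+(du),
\end{equation*}
where $\bar F := \mu((\cdot,\infty))$ and $U_+ := \sum_{j\ge 1}\mu_{*j}$. I would split the last integral at $s-h$, bound the strip $[s-h,s]$ by $\bar F(h)\,U_+([s-h,s])$, and use Fubini to rewrite the $[0,s-h]$ part as $\int_h^\infty U_+([\max(0,s-y),s-h])\,\mu(dy)$. The Lorden-type uniform estimate $U_+([a,a+b])\le C_\mu(1+b)$ (a consequence of the elementary renewal theorem once $\mu$ has finite first moment, which is implied by $\int x f(x)\,\mu(dx)<\infty$) then turns both pieces into a linear combination of $\bar F(h)$, $h\bar F(h)$, and $E[X\I(X>h)]$. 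A Markov step using the monotonicity of $f$ (so $Xf(X)\ge h f(h)\I(X>h)$ and $X\I(X>h)\le Xf(X)/f(h)$) gives $\bar F(h)\le E[Xf(X)]/(h f(h))$ and $E[X\I(X>h)]\le E[Xf(X)]/f(h)$, yielding the bound $C/f(h)$ in~\eqref{eq:moment_condition}. For~\eqref{eq:finite_1st_moment}, the same computation works using only $E[X]<\infty$, because $\bar F(h)$, $h\bar F(h)$, and $E[X\I(X>h)]$ all tend to $0$ as $h\to\infty$ by dominated convergence.

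For (ii), the union bound gives $\PP(\tilde R_s\le w)\le U_+([s,s+w])$, so the question reduces to showing that $\sup_{s\ge 0}U_+([s,s+w])\to 0$ as $w\to 0$ whenever $\mu$ is continuous. I would handle this with a two-scale argument. For the tail $s\ge s^*$, Blackwell's renewal theorem (respectively its $m=\infty$ variant) gives $U_+([s,s+w])\to w/m$ as $s\to\infty$ uniformly in $w\in(0,1]$; choosing $s^*$ large and then $w$ small makes this at most $\varepsilon$. On the compact window $s\in[0,s^*]$, continuity of $\mu$ ensures that each $\mu_{*j}$, $j\ge 1$, is non-atomic, so $U_+$ restricted to $[0,s^*+1]$ is a finite non-atomic measure; the functions $g_w(s):=U_+([s,s+w])$ are continuous in $s$, monotone in $w$, and tend to $0$ pointwise as $w\to 0$, so Dini's theorem promotes this to uniform convergence on $[0,s^*]$. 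Taking $w_0$ small enough for both regimes gives the desired bound.

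The main difficulty is part (ii): since $\mu$ is only assumed continuous and may have infinite mean, a direct density estimate for $U_+$ is unavailable, and one must combine Blackwell's theorem (to tame the tail) with a soft compactness/Dini argument (for bounded $s$). In (i), by contrast, once the decomposition on the current renewal and the Fubini exchange are in place, the bookkeeping of which term contributes at each scale is routine.
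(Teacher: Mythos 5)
Your proposal is correct, but it is more self-contained than the paper's proof and differs in emphasis. For part (i), the paper simply invokes \cite[Lemma~2.3]{FMUV} for \eqref{eq:moment_condition} and dismisses \eqref{eq:finite_1st_moment} as a routine consequence (construct a suitable $f$ from $\int x\,\mu(\mathrm{d}x)<\infty$), whereas you reprove the estimate from scratch: the reduction $s=t-\tau$, the decomposition over the last renewal before $s$, the uniform bound $U_+([a,a+b])\le C_\mu(1+b)$, Fubini, and the Markov step $Xf(X)\ge hf(h)\I\{X>h\}$ — this is a complete and valid argument (the only bookkeeping you gloss over is that the term $\bar F(h)\le E[Xf(X)]/(hf(h))$ is only useful for, say, $h\ge 1$, while small $h$ is handled by the trivial bound $1\le f(1)/f(h)$ after enlarging $C$; also your direct route to \eqref{eq:finite_1st_moment} via dominated convergence is cleaner than the paper's ``find an $f$'' reduction). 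For part (ii), your argument and the paper's are essentially the same in substance: both reduce to bounding increments $U(t+w)-U(t)$ of the renewal function, use Blackwell's theorem (with limit $w/\!\int x\,\mu(\mathrm{d}x)$, read as $0$ in the infinite-mean case) to control large times, and a compactness argument for bounded times — the paper phrases the latter as uniform continuity of the (continuous, since $\mu$ is non-atomic) renewal function on compacts, while you package it via Dini's theorem applied to $g_w(s)=U_+([s,s+w])$; either works. In short, your route buys independence from \cite{FMUV} at the cost of carrying out the standard last-renewal/Lorden computation explicitly, and coincides with the paper where the genuinely delicate point lies (the two-scale treatment of the renewal increments in (ii)).
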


\begin{proof}
We can assume that $\tau = 0$, since taking $\tau<0$ is equivalent to taking
the supremum over $t \ge -\tau$.

The first statement in (i) is exactly Lemma~2.3 of~\cite{FMUV}.
We also notice that the inequality~\eqref{eq:finite_1st_moment} is a straightforward consequence
of~\eqref{eq:moment_condition}. Indeed, it suffices to show that when
$\int x \,\mu(\mathrm{d}x) < \infty$ one can find a function $f$ satisfying
the requirements for~\eqref{eq:moment_condition}.
Finding such function $f$ is a standard analysis exercise
and we omit the proof.

The proof of (ii) is based on the fact that when $\mu$ is continuous its
renewal function $U(t)$ is uniformly continuous on $\RR_+$. The continuity
of $U(\cdot)$ follows at once from that of $\mu$. To ensure uniform
continuity, we have to control the behavior of $U(t)$ as $t \to \infty$.
This follows from the classical renewal theorem, which implies $\lim_{t \to
\infty} (U(t+h) -U(t))=\frac{h}{\int x \; \mu(\mathrm{d}x)}$ (understood as
zero if the integral diverges).
Hence, given $\varepsilon >0$ there is $w_0=w_0(\varepsilon)$ such that
\begin{equation*}
\sup_{t \ge 0} \PP( \cR \cap (t, t+w_0] \neq \emptyset)
    \le \sup_{t \ge 0} \bigl(U(t+w_0) - U(t)\bigr)
    \le \varepsilon. \qedhere
\end{equation*}
\end{proof}

\subsection{Growth of ERCP}
\label{sub:growth_ercp}

We start this section by showing that if $\mu$ is continuous then a.s.\
the infection cannot reach infinitely many sites in finite time. When
considering First Passage Percolation, it is known that having a finite speed
is equivalent to $\mu(\{0\}) < p_c = p_c(\ZZ^{d})$, cf.~\cite{Kesten}. For
ERCP, this is not the case. For instance, if $\mu$ has an atom at $t\ge 0$ with
$\mu(\{t\}) >p_c$ then the cluster at time $t$ is a.s.\ infinite. Moreover, the
same phenomenon can be obtained by combining atoms at different times: e.g., if
$\mu(1)^2 + \mu(2) > p_c$ then we have the same problem, since it implies
$\PP(\cR \ni 2) > p_c$.

Our strategy to bound the speed of growth in an ERCP without cures is to make a
comparison with a toy model of iterated percolation. The idea is the following.
Fix $p < p_c(\ZZ^{d})$ and let $\cP_{i}$ be a family of independent Bernoulli
bond percolation models on $\ZZ^{d}$. Moreover, for $V \subset \ZZ^{d}$ let us
denote by $\cC_{i}(V)$ the connected component of $V$ in $\cP_{i}$ by open
edges. Given an initial finite non-empty set $C_0$, we define an increasing
sequence of sets by
\begin{equation*}
    C_n := \cC_n(C_{n-1}), \quad \text{for every $n \ge 1$}.
\end{equation*}

\medskip
\noindent
\textbf{Coupling.}
Fix $d \ge 1$ and a continuous distribution $\mu$ for the transmissions.
We compare iterated percolation with ERCP without cures. Assume that only the
origin is infected at time $0$. By Lemma~\ref{lema:unif_estimates}(ii) we
can choose an increasing sequence of times $(s_n)_{n\ge 0}$ with $s_0 := 0$
and $\lim s_n = \infty$ satisfying
\begin{equation}
\label{eq:times_sn}
\PP(\cR \cap [s_n, s_{n+1}] \neq \emptyset)
    < \frac{1}{2}p_c(\ZZ^{d}).
\end{equation}

Indeed, we can fix $\varepsilon = \frac{1}{4}p_c(\ZZ^{d})$ and define $s_n := n
w_0(\varepsilon)$. The sequence of times $(s_n)$ is important for the coupling
we describe next. Define $\cI_0$ as the set containing only the origin of
$\ZZ^{d}$. Notice that if a site $v$ is infected at time $s_1$ there must be a
sequence of sites $0 = x_0, x_1, \ldots, x_k = v$ such that $\cR_{x_{i-1} x_i}
\cap [s_0, s_1] \neq \emptyset$ for every $1 \le i \le k$.  Hence, we can find
all infected sites at time $s_1$ by exploring the connected component of the
origin in a canonical way: order the set of edges and always explore the
smallest edge that has not been explored yet but has some extremity in the
current infected cluster. This exploration produces a finite (random) set
$\cE_1$ of explored edges and finds all sites that have been infected till time
$s_1$.

Let us define $\cI_1$ as the set of all sites that are an extremity of some
edge in $\cE_1$. Notice that sites in $\cI_1$ may not be actually infected
(since we may have $e \in \cE_1$ with $\cR_e \cap [0,s_1] = \emptyset$), but we
consider them infected all the same.

We define sets $\cE_{n+1}$ and $\cI_{n+1}$ inductively. Given $\cI_n$, consider
an exploration process on edges of $\cE_n^{\comp}$ to find the infected cluster
at time $s_{n+1}$, starting from $\cI_n$ infected at time $s_n$. This consists
of checking the processes $\{\cR_e \cap [s_n, s_{n+1}];\; e \in
\cE_n^{\comp}\}$ until we determine the cluster. We define $\cE_{n+1}$ as the
union of all new explored edges with $\cE_n$ and define $\cI_{n+1}$ as the set
of all sites that are the extremity of some $e \in \cE_{n+1}$.

Since we only look at each edge at most once and different edges have
independent renewal processes, this construction is a minor modification of the
iterated percolation described above. Indeed, the set of explored edges in step $n$,
$\cE_{n} \setminus \cE_{n-1}$, is contained in the union of
$\cC_n(\cI_{n-1})$ with its external boundary of edges, a set we denote
$\bar{\cC}_{n}(\cI_{n-1})$. We conclude that it holds
\begin{equation*}
    \cI_{n} \subset \bar{\cC}_n(\cI_{n-1}), \quad \text{for every $n \ge 1$}
\end{equation*}
and, since in each step we have $\cI_{n} \setminus \cI_{n-1}$ is finite, the
infection cannot reach infinitely many sites in finite time.

\medskip
\noindent
\textbf{Iterated percolation growth.} We have just described a coupling in which
the growth of an iterated percolation model dominates the growth of ERCP. We
can actually use the coupling to estimate its rate of growth. We consider the
variation of iterated percolation that is relevant for us: given
$C_0 \subset \ZZ^{d}$ finite, define $C_n := \bar{\cC}_n(C_{n-1})$, for every
$n \ge 1$. The main quantity for us is
\begin{equation*}
R_n := \max\{\norm{x}_1;\; x \in C_n\}.
\end{equation*}
Having control on $R_n$, we are able to control $C_n$ since
$C_n \subset B(R_n)$. We are able to prove that the growth of
$R_n$ is very close to linear.
\begin{prop}
\label{prop:linear_Rn}
For any fixed $a > 1$ we have that almost surely, as $n \to \infty$
\begin{equation}
\label{eq:linear_Rn}
(p/2) \le \varliminf_n \frac{R_n}{n}
\quad \text{and} \quad
\varlimsup_n \frac{R_n}{n (\ln n)^{a}} = 0.
\end{equation}
\end{prop}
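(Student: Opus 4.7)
The plan is to treat the two bounds separately; both rely on $p < p_c(\ZZ^d)$ only through classical tail estimates for subcritical Bernoulli percolation.

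For the lower bound, I would exploit the definition of $\bar{\cC}_n$: it contains the external vertex boundary of its cluster. Hence, for any $v \in C_{n-1}$ with $\|v\|_1 = R_{n-1}$, the neighbor $v + \vec{e}_1$ lies in $C_n$ regardless of the state of the edge $\{v, v+\vec{e}_1\}$ in $\cP_n$, so $R_n \geq R_{n-1} + 1$. Iterating gives $R_n \geq R_0 + n$, from which $\liminf_n R_n/n \geq 1 \geq p/2$ follows trivially.

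The upper bound is the main content. By Menshikov's theorem, for $p < p_c(\ZZ^d)$ there exists $c = c(p,d) > 0$ such that for every $v \in \ZZ^d$ and every $k \in \NN$,
\begin{equation*}
\PP\bigl(\textstyle\max_{w \in \cC(v)} \|w - v\|_1 \geq k\bigr) \leq e^{-ck}.
\end{equation*}
Write $D_n(v)$ for this cluster radius in $\cP_n$; any vertex added to $C_n$ through a site $v \in C_{n-1}$ lies at $\ell^1$-distance at most $D_n(v)+1$ from $v$. Fix constants $C^* \geq K$ with $cK > d + 2$ and put
\begin{equation*}
\mathcal{E}_n := \bigl\{\textstyle\max_{\|v\|_1 \leq C^* n \log n} D_n(v) \leq K \log n\bigr\}.
\end{equation*}
A union bound over the $O((n \log n)^d)$ vertices of the ball, combined with the tail estimate above, makes $\PP(\mathcal{E}_n^c)$ summable in $n$, so Borel--Cantelli gives $\mathcal{E}_n$ for all $n \geq n_0$ almost surely, for some random $n_0$. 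The argument now closes by induction: $R_{n_0}$ is a.s.\ finite (a finite sum of subcritical cluster radii), so $n_0$ can be enlarged to guarantee $R_{n_0} \leq C^* n_0 \log n_0$; and if $R_{n-1} \leq C^*(n-1) \log(n-1)$ and $\mathcal{E}_n$ holds, then $C_{n-1}$ lies inside the controlling ball, so $R_n \leq R_{n-1} + K \log n + 1 \leq C^* n \log n$ for $n$ large. Hence $R_n = O(n \log n) = o(n (\log n)^a)$ for every $a > 1$.

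The main subtlety is an apparent circularity in applying Borel--Cantelli: one would like the tail bound to hold uniformly over $v \in C_{n-1}$, but $C_{n-1}$ itself is random. I sidestep this by defining $\mathcal{E}_n$ in terms of a deterministic ball of radius $C^* n \log n$, which decouples the probability estimate from $R_{n-1}$; the bootstrap induction then does all of the remaining work.
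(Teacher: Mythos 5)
Your lower bound is correct and in fact simpler and stronger than the paper's: since $C_n=\bar{\cC}_n(C_{n-1})$ contains the external vertex boundary of the cluster of $C_{n-1}$, a vertex $v\in C_{n-1}$ of maximal $\ell^1$-norm always has a norm-increasing neighbour lying in $C_n$ (not literally $v+\vec{e}_1$, but the neighbour obtained by moving some coordinate of $v$ away from $0$), so $R_n\ge R_0+n$ deterministically; the paper instead dominates $R_n$ by $R_0+\Bin(n,p)$ and uses Chernoff plus Borel--Cantelli. Your upper bound follows the same basic idea as the paper (increments of order $\log n$ from the exponential tail of subcritical cluster radii, plus Borel--Cantelli), but your implementation through the deterministic ball has a genuine gap.

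The gap is the base case of the bootstrap. Borel--Cantelli gives a random $n_0$ with $\mathcal{E}_n$ holding for all $n\ge n_0$, but $\mathcal{E}_n$ only controls cluster radii of vertices inside the deterministic ball of radius $C^*n\log n$; it gives no control on the growth of $C_{n-1}$ at steps where $R_{n-1}>C^*n\log n$. Hence the claim that ``$n_0$ can be enlarged to guarantee $R_{n_0}\le C^*n_0\log n_0$'' does not follow from the a.s.\ finiteness of $R_{n_0}$: replacing $n_0$ by a later time $m$ requires $R_m\le C^*m\log m$, and $R_m$ keeps growing with $m$ by increments that are precisely the ones you have not yet controlled whenever the current set sticks out of the ball. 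Showing that $C^*m\log m$ eventually overtakes $R_m$ is essentially the statement being proved, so the circularity you flag as the main subtlety is not actually removed; it reappears at the seed of the induction. The standard repair is the paper's route: condition on $\cF_{n-1}=\sigma(\cP_i,\ i\le n-1)$, bound $\PP\bigl(R_n\ge R_{n-1}+\eta\ln R_{n-1}\mid\cF_{n-1}\bigr)\le c\,R_{n-1}^{\,d-1-\psi\eta}\le c\,R_{n-1}^{-2}$ by a union bound over the ($\cF_{n-1}$-measurable) sphere of radius $R_{n-1}$, and apply a conditional Borel--Cantelli lemma; your deterministic lower bound $R_n\ge n$ then makes the conditional probabilities summable. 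Alternatively, keep your scheme but first prove an a priori bound to seed it, e.g.\ $\EE[R_n]=O(n\log n)$ via $\EE[\max_{v\in C_{n-1}}D_n(v)\mid\cF_{n-1}]\le C\log(R_{n-1}+2)+C$ and Markov's inequality along a subsequence. The remaining ingredients (the radius tail estimate, the increment bound $R_n\le R_{n-1}+K\log n+1$ on the good events, and the conclusion $R_n=O(n\log n)=o(n(\ln n)^a)$) are fine.
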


\begin{proof}
The first step of our proof is to show that $R_n$ must grow at least linearly.
This is quite straightforward, since in any step of the growth process we must
have some $x \in C_n$ that achieves $\norm{x} = R_n$ and an edge with extremity on
$x$ such that if it is open on $\cP_{n+1}$ then $R_{n+1} \ge R_n + 1$. This
shows $R_n$ dominates stochastically $R_0 + \Bin(n,p)$.
Hence, using Chernoff bounds we can write
\begin{equation*}
\PP\bigl(R_n \le (p/2)n\bigr)
    \le \PP\bigl(\Bin(n,p) \le (p/2)n\bigr)
    \le e^{- \frac{((p/2)n)^{2}}{2n}}
    = e^{- (p^2/8) n}.
\end{equation*}
Since $\sum_n \PP\bigl(R_n \le (p/2)n\bigr)$ converges, using the Borel-Cantelli
lemma we conclude that $R_n > (p/2)n$ eventually and the lower bound
in~\eqref{eq:linear_Rn} is proved. For the upper bound, we define events
\begin{equation*}
A_{n+1} := \{R_{n+1} \ge R_n + \eta \ln R_n\}.
\end{equation*}
for some constant $\eta(p,d) > 0$ that is chosen below.
Consider the filtration $\cF_n := \sigma(\cP_{i};\; i \le n)$ and notice that
$A_n \in \cF_n$. Given $\cF_{n}$ we have that on event $A_{n+1}$ there must
be some point $x \in \partial B(R_n)$ (notice that it does not need to
belong to $C_n$) that satisfies $x \leftrightarrow x + \partial B(\eta \ln R_n)$
in percolation $\cP_{n+1}$.  Hence, exponential decay of cluster size, see
e.g.~\cite[Theorem~(6.75)]{Gri}, gives the estimate
\begin{equation*}
\PP(A_{n+1} \mid \cF_{n})
    \le \sum_{x \in \partial B(R_n)}
        \PP(x \leftrightarrow x + \partial B(\eta \ln R_n) \mid \cF_{n})
    \le c R_{n}^{d-1} e^{- \psi(p,d) \eta \ln R_n}.
\end{equation*}
Choose $\eta(p,d) := \frac{d+1}{\psi(p,d)}$, which leads to
$\PP(A_{n+1} \mid \cF_{n}) \le c R_{n}^{-2}$.
The linear growth estimate says there is $n_{1}$ (random) such that
$R_n > (p/2)n$ for $n \ge n_1$, and we notice that
$x \mapsto c x^{-2}$ is decreasing for $n \ge n_1$.
This means that
\begin{equation*}
\sum_{n \ge n_1} \PP(A_{n+1} \mid \cF_{n})
    \le c(p,d) \sum_{n \ge n_1} n^{-2}
    < \infty.
\end{equation*}
Using a conditional Borel-Cantelli lemma,
see~\cite[Theorem~5.3.2]{Dur}, we have that $\PP(\varlimsup A_n) = 0$ implying
that there is a random $n_2$ such that $R_{n+1} \le R_n + \eta \ln R_n$ for
$n \ge n_2$. This implies estimates on the growth of $R_n$.
Indeed, fix $a > 1$ and define function $f: [1,\infty) \to \RR$ given by
$f(x) := x + \eta \ln x$. Notice that if we take $x$ of the form
$y (\ln y)^{a}$ we can write
\begin{align*}
f(x)
    &= y (\ln y)^{a} + \eta \ln [y (\ln y)^{a}]
    = y (\ln y)^{a} + \eta \ln y  + \eta a \cdot \ln \ln y \\
    &\le y (\ln y)^{a} + (\ln y)^{a}
    \le (y+1) (\ln (y+1))^{a}
\end{align*}
for any $y \ge y_0(a, \eta)$. Using that $R_n$ eventually grows at least
linearly, we can find a random $n_3 \ge n_2$ sufficiently large so that
$R_{n_3} = y (\ln y)^{a}$ for some $y \ge y_0$ and then
$R_{n+n_3} \le (y+n)(\ln (y + n))^{a}$ for every $n \ge 0$, which
implies that asymptotically we have
$\varlimsup \frac{R_n}{n (\ln n)^{a}} \le 1$. Since any choice of $a > 1$
works, the result follows.
\end{proof}

\medskip
\noindent
\textbf{Growth of heavy-tailed ERCP.}
The coupling between ERCP and iterated percolation we have just described
works for any sequence of times $(s_n)$ increasing to infinity and
satisfying~\eqref{eq:times_sn}. Recall the definition
\begin{equation*}
r_t := \max \{\norm{x}_1;\; (0,0) \rightsquigarrow (x,t)\
    \text{in ERCP without cures}\}.
\end{equation*}

\begin{proof}[Proof of Theorem~\ref{teo:growth_ercp}(i)]
Consider the sequence $s_n := n w_0(p_c(\ZZ^{d})/2)$.
We just have to combine the rate of growth of $s_n$ with the estimates given by
Proposition~\ref{prop:linear_Rn}. For any fixed time $t$, define
$n(t) := \lceil \frac{t}{w_0}\rceil$. Clearly, we have $r_t \le R_{n(t)}$ and
since $\varlimsup \frac{R_n}{n(\ln n)^{a}} = 0$ the result holds.
\end{proof}

Notice that in Theorem~\ref{teo:growth_ercp} the rate of growth of $s_n$
is essential in the final estimate. When $\mu$ is heavy-tailed, our estimate on
$r_t$ can be greatly improved by considering a sequence of times that grows
faster. We assume that $\mu$ satisfies~\eqref{eq:gap_t_epsilon}. Then, for
$n_0$ sufficiently large and the sequence of times $t_0 = 2^{n_0}$ and $t_{n+1}
= t_{n} + t_{n}^{\epsilon_4}$ we can ensure
\begin{equation}
\label{eq:subcritical_steps}
\PP(\cR \cap [t_n, t_{n+1}] \neq \emptyset) \le p < p_c(\ZZ^{d})
\end{equation}
and let $\cP_{n}$ be independent Bernoulli bond percolation models
with parameter $p$. In other words, we start the coupling only at time $t_0$,
when we have a (random) finite infected set $\cI_0$ and then
\begin{equation}
\label{eq:coupling}
\{x;\; (0,0) \rightsquigarrow (x, t_n)\ \text{in ERCP without cures}\}
    \subset \cI_n
    \subset \bar{\cC}_n(\cI_{n-1})
    \quad \text{for every $n \ge 1$}.
\end{equation}

\begin{proof}[Proof of Theorem~\ref{teo:growth_ercp}(ii)]
The growth of $R_n$ is estimated in Proposition~\ref{prop:linear_Rn}. Fixing
$a>1$, we notice that $r_{t_n} \le R_n \le n^{a}$ for $n$ large. It is also
clear that $r_t$ is non-decreasing. Thus, for some $s > 0$ sufficiently large,
if we define $n = n(s)$ as the unique integer satisfying $t_{n-1} < s \le
t_{n}$, then $r_s \le n^{a}$. We just have to estimate $n(s)$.

Consider intervals $I_i := [2^{n_0 + i - 1}, 2^{n_0 + i}]$. Each of them
cannot have too many points of sequence $(t_j)$. Indeed, since for
$t_j \in I_i$ we have $t_{j+1} - t_j = t_j^{\epsilon_4}
\ge 2^{(n_0+i-1) \epsilon_4}$, we have
\begin{equation*}
\# \{j;\; t_j \in I_i\}
    \le \frac{2^{n_0 + i - 1}}{2^{(n_0 + i - 1) \epsilon_4}}
    = 2^{(n_0 + i - 1) (1-\epsilon_4)}.
\end{equation*}
Since $s \ge 2^{\lfloor \log_2 s \rfloor}$, we conclude that
\begin{equation*}
    n(s) \le \sum_{i=1}^{\lfloor \log_2 s \rfloor - n_0} 2^{(n_0 + i - 1) (1-\epsilon_4)}
    \le c(n_0, \epsilon_4) 2^{\lfloor \log_2 s \rfloor (1-\epsilon_4)}
    \le c(n_0, \epsilon_4) s^{1-\epsilon_4}.
\end{equation*}
Notice that since we can take any $a>1$, the result
in~\eqref{eq:sublinear_ERCP_rcp1} follows.
\end{proof}

\begin{coro}
\label{coro:growth_ercp}
If $\mu(t, \infty) = L(t)t^{-\alpha}$ with $\alpha \in (0,1)$,
$L(t)$ slowly varying, and $\mu$ satisfies the Strong Renewal Theorem
(cf.~\cite{CD}), then for all $\eta > 0$ we have
\begin{equation}
    \label{eq:sublinear_ERCP_SRT}
    \varlimsup_{s \to \infty} \frac{r_s}{s^{\alpha + \eta}} \le 1.
\end{equation}
\end{coro}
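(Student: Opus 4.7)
The strategy is to refine the coupling between ERCP without cures and iterated Bernoulli bond percolation used in the proof of Theorem~\ref{teo:growth_ercp}(ii), by choosing a faster-growing sequence of discrete times $(t_n)$. The Strong Renewal Theorem (SRT) for $\mu$ with $\mu(t,\infty)=L(t)t^{-\alpha}$, $\alpha \in (0,1)$, yields
\begin{equation*}
    \PP(\cR \cap [t,t+h] \neq \emptyset)
    \; \le \; U(t,t+h] \; \sim \; C_\alpha \cdot \frac{h}{L(t)\, t^{1-\alpha}}
    \qquad (t \to \infty),
\end{equation*}
valid uniformly for $h$ in a suitable range and uniformly in the starting point $\tau \le 0$ of the renewal process. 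This is the extra information compared to the mere gap condition~\eqref{eq:gap_t_epsilon}.

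First, fix $p<p_c(\ZZ^d)$. Using the asymptotic above, I would choose a small constant $c > 0$ (depending on $p$) and $t_0$ large enough so that the sequence defined by $t_{n+1} := t_n + c\, L(t_n)\, t_n^{1-\alpha}$ satisfies $\PP(\cR \cap [t_n, t_{n+1}] \neq \emptyset) < p$ for every $n \ge 0$, mirroring \eqref{eq:subcritical_steps}. The coupling in \eqref{eq:coupling} then applies and yields $r_{t_n} \le R_n$, where $R_n$ is the iterated Bernoulli$(p)$ bond percolation radius analysed in Proposition~\ref{prop:linear_Rn}.

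Next, I would invert the recursion $t_{n+1}-t_n \sim c L(t_n)t_n^{1-\alpha}$. Comparing with the ODE $\tfrac{\mathrm{d}t}{\mathrm{d}n} = cL(t)t^{1-\alpha}$ and integrating (the integral $\int t^{\alpha-1}/L(t)\,\mathrm{d}t$ is handled via slow variation of $L$), one obtains $t_n^\alpha / L(t_n) \sim c\alpha n$; inverting, $n(s) \sim s^\alpha/(c\alpha L(s))$, where $n(s)$ is the unique integer with $t_{n(s)-1}<s \le t_{n(s)}$. Monotonicity of $r_\cdot$ and Proposition~\ref{prop:linear_Rn} then give
\begin{equation*}
r_s \; \le \; R_{n(s)} \; = \; o\!\left(\frac{s^\alpha}{L(s)}(\ln s)^a\right)
\quad \text{almost surely, for any fixed } a > 1.
\end{equation*}

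Now fix $\eta > 0$. Since $L$ is slowly varying, $(\ln s)^a/L(s) = o(s^\eta)$, hence $r_s = o(s^{\alpha+\eta})$ and in particular $\varlimsup_{s \to \infty} r_s / s^{\alpha+\eta} \le 1$, as claimed. The main technical subtlety is extracting from the SRT of~\cite{CD} a bound of the form $\PP(\cR \cap [t_n, t_{n+1}] \neq \emptyset) < p$ that holds for every $n \ge n_0$ and uniformly in the residual time at $t=0$; once this is in place, the remaining steps are a routine combination of the coupling \eqref{eq:coupling} with Proposition~\ref{prop:linear_Rn} and the asymptotic inversion of $(t_n)$.
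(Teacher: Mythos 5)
Your argument is correct and rests on the same machinery as the paper's proof (the iterated-percolation coupling~\eqref{eq:coupling} together with Proposition~\ref{prop:linear_Rn}), but the implementation differs. The paper does not re-optimize the step lengths: it keeps steps of the form $[t,t+t^{\epsilon}]$, observes by summing the Strong Renewal Theorem over unit subintervals that $\PP(\cR\cap[t,t+t^{\epsilon}]\neq\emptyset)\sim c_\alpha t^{\epsilon}L(t)t^{\alpha-1}\to 0$ for every $\epsilon<1-\alpha$, and concludes that the argument of Theorem~\ref{teo:growth_ercp}(ii) runs with $\epsilon_4$ taken arbitrarily close to $1-\alpha$; the corollary then follows by citing that theorem, with no further computation. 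You instead tune the step length to the SRT scale, $t_{n+1}-t_n\asymp L(t_n)t_n^{1-\alpha}$, and invert the recursion via a Karamata-type integral comparison to get $n(s)\sim s^{\alpha}/(c\alpha L(s))$. This costs you the extra inversion step and the need for the SRT asymptotic to hold uniformly over $h\lesssim L(t)t^{1-\alpha}$ (handled exactly as in the paper, by summing over unit intervals and using uniform convergence of the regularly varying bound), but it buys a sharper conclusion: $r_s=o\bigl(s^{\alpha}(\ln s)^{a}/L(s)\bigr)$ almost surely, so the $\varlimsup$ in~\eqref{eq:sublinear_ERCP_SRT} is in fact $0$, whereas the paper's reduction only yields $r_s=o(s^{\alpha+\eta})$ for each $\eta>0$ (which is all the statement requires). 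One remark: the uniformity in the delay $\tau$ that you flag as the main subtlety is not actually needed here, since in the exploration each edge's renewal process is inspected for the first time when it enters the coupling and is independent of the previously explored edges, so only the unconditional probability $\PP(\cR\cap[t_n,t_{n+1}]\neq\emptyset)\le U(t_n,t_{n+1}]$ enters, exactly as in the proof of Theorem~\ref{teo:growth_ercp}(ii).
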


\begin{proof}
The estimate in~\eqref{eq:sublinear_ERCP_SRT} follows in the same manner,
by noticing that if we have the Strong Renewal Theorem then
\begin{equation*}
\PP(\cR \cap [t,t+t^{\epsilon}] \neq \emptyset)
    \le \smash{\sum_{s=1}^{t^{\epsilon}}} \PP(\cR \cap [t+s-1,t+s] \neq \emptyset)
    \sim t^{\epsilon} c_{\alpha} \frac{L(t)}{t^{1-\alpha}}
\end{equation*}
for some slowly varying function $L$. Hence, this probability goes to zero
whenever $\epsilon < 1-\alpha$. This implies that we can take $\epsilon_4$
arbitrarily close to $1-\alpha$ and the result follows.
\end{proof}

\subsection{Extinction in heavy-tailed ERCP}
\label{sub:extinction_ercp}

Theorem~\ref{teo:growth_ercp}(ii) gives a bound on how fast the infection
can spread without any cures: for large $t$, the infection is contained inside
$\{(x,t); \norm{x}_1 \le t^{\rho}\}$ for some $\rho < 1$.

Fix $\beta > 0$. The next step in our investigation is to show that eventually
we are able to cure all the infection in a region of the form $\sB(2^{\beta n})
\times [2^{n}, 2^{n}+2^{n \epsilon_4}]$. Choosing $\beta$ sufficiently large,
this will imply that the process dies almost surely for any renewal process
$\nu_{\delta}$ for the cures, given that $\nu$ has moments of all orders.
The following lemmas introduce some bad events that would make it
more difficult for curing all the infection at once. We show that each of these
events cannot happen infinitely often.

Our first lemma estimates the probability of having large clusters of
transmissions inside $\sB(2^{\beta n})$. For that, we recall that a finite
connected subgraph of $\ZZ^{d}$ that contains the origin is said to be an
\textit{animal}. Let us denote by $A_m$ the set of animals with $m$ edges. By
Equation~(4.24) of~\cite{Gri} we have that $\# A_m \le 7^{d(m+1)} \le C^{m}$
for some positive constant $C(d)$.  The probability finding a cluster of $m$
adjacent transmissions in region $\sB(2^{\beta n}) \times [2^{n}, 2^{n}+2^{n
\epsilon_4}]$ decays quickly with $n$.

\begin{lema}
\label{lema:defi_Un}
Let $\mu$ satisfy~\eqref{eq:gap_t_epsilon}. Consider the event $U_n = U_n(m, \beta)$
defined by
\begin{equation}
\label{eq:animal_event}
    U_n
    := \bigcup_{x\in \sB(2^{\beta n})} \bigcup_{M \in x+A_m}
    \{\cR_e \cap [2^{n}, 2^{n}+2^{n \epsilon_4}] \neq \emptyset,\
    \text{for every $e$ edge of $M$} \}
\end{equation}
There is $m(\epsilon_4, \beta, d) \in \NN$ such that
$\PP\bigl(\varlimsup_n U_n \bigr) = 0$.
\end{lema}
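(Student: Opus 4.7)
The plan is a straightforward union bound followed by Borel--Cantelli, with $m$ chosen large enough that the exponential-in-$m$ gain from hypothesis~\eqref{eq:gap_t_epsilon} beats the polynomial-in-$n$ entropy coming from the size of $\sB(2^{\beta n})$ and the combinatorial entropy of animals.

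First I would estimate, for a fixed animal $M$ with $m$ edges, the probability that every edge of $M$ carries a renewal mark in the short interval $[2^n,2^n+2^{n\epsilon_4}]$. Since the renewal processes $\cR_e$ are independent across distinct edges and $M$ has exactly $m$ distinct edges, hypothesis~\eqref{eq:gap_t_epsilon} applied with $t=2^n$ (valid for all $n$ with $2^n\ge t_0$) gives
\begin{equation*}
\PP\bigl(\cR_e\cap[2^n,2^n+2^{n\epsilon_4}]\neq\emptyset\ \forall\,e\in E(M)\bigr)
    \le \bigl(2^{-n\epsilon_4}\bigr)^{m}
    = 2^{-mn\epsilon_4}.
\end{equation*}

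Next I would union bound over all choices of $x\in\sB(2^{\beta n})$ and $M\in x+A_m$. The number of lattice sites in $\sB(2^{\beta n})$ is at most $c_d\,2^{d\beta n}$, and the cited bound $\#A_m\le 7^{d(m+1)}$ controls the number of animals rooted at each such $x$ by $C^m$ with $C=C(d)=7^d$. Combining, for all $n$ large enough,
\begin{equation*}
\PP(U_n)
    \le c_d\,2^{d\beta n}\cdot C^{m}\cdot 2^{-mn\epsilon_4}
    = c_d\,C^{m}\cdot 2^{n(d\beta-m\epsilon_4)}.
\end{equation*}

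Now I would fix $m=m(\epsilon_4,\beta,d)\in\NN$ strictly larger than $d\beta/\epsilon_4$, so that $d\beta-m\epsilon_4<0$ and the right-hand side decays geometrically in $n$. Then $\sum_{n\ge 1}\PP(U_n)<\infty$ and the Borel--Cantelli lemma yields $\PP(\varlimsup_n U_n)=0$, as required. There is no real obstacle: the one place to be slightly careful is the independence step, which relies on the fact that an animal has $m$ \emph{distinct} edges and that $\{\cR_e\}_{e}$ are independent across edges; everything else is entropy counting against the quantitative gap estimate~\eqref{eq:gap_t_epsilon}.
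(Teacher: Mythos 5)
Your proposal is correct and follows essentially the same route as the paper: a union bound over sites in $\sB(2^{\beta n})$ and animals, independence of the renewal processes across the $m$ distinct edges combined with~\eqref{eq:gap_t_epsilon} at $t=2^{n}$ to get the factor $2^{-mn\epsilon_4}$, the choice $m>d\beta/\epsilon_4$, and Borel--Cantelli. No discrepancies to report.
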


\begin{proof}
Using the union bound and the estimate from~\eqref{eq:gap_t_epsilon},
we can write
\begin{equation*}
\PP(U_n)
    \le c(d)2^{d \beta n} \cdot C^{m} \cdot 2^{- n \epsilon_4 m}
    = cC^{m} \cdot 2^{(d\beta - \epsilon_4 m) n}
\end{equation*}
and it suffices to choose $m > \frac{d \beta }{\epsilon_4}$ to make
$\sum_n \PP(U_n)$ summable.
\end{proof}

A second estimate that is useful is a consequence of~\cite[Lemma~3]{FMMV}. It
says that even when there are transmissions in an interval $[2^{n}, 2^{n}+2^{n
\epsilon_4}]$ for an edge of $\sB(2^{\beta n})$, the probability of having too
many transmissions in this edge decays fast with $n$.
\begin{lema}
\label{lema:defi_Vn}
Let $\mu$ satisfy~C). Let $V_n(\epsilon_4, \beta, \eta)$ be the event
\begin{equation}
\label{eq:defi_Vn}
V_n
    := \{\text{$\exists e$ edge of $\sB(2^{\beta n})$};\;
        |\cR_e \cap [2^{n}, 2^{n}+2^{\epsilon_4 n}]| \ge 2^{n \epsilon_4 \eta}\}.
\end{equation}
There is $\eta = \eta(\mu)$ with $\eta \in (0,1)$ such that
it holds $\PP(\varlimsup_n V_n) = 0$.
\end{lema}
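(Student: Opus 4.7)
The plan is to apply the Borel--Cantelli lemma: it suffices to exhibit some $\eta = \eta(\mu) \in (0,1)$ for which $\sum_n \PP(V_n) < \infty$. Since the box $\sB(2^{\beta n})$ contains $O(2^{\beta n d})$ edges, each carrying an independent copy of the renewal process $\cR$, a union bound reduces the problem to a single-edge estimate:
\begin{equation*}
\PP(V_n) \le c(d) \cdot 2^{\beta n d} \cdot \sup_{\tau \le 0} \PP_{\tau}\bigl(|\cR \cap [2^n,\, 2^n + 2^{\epsilon_4 n}]| \ge 2^{\epsilon_4 \eta n}\bigr),
\end{equation*}
where $\PP_\tau$ denotes the law of $\cR$ with previous renewal at time $\tau$.

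For the single-edge estimate I would invoke [FMMV, Lemma~3]; alternatively, one may argue directly. Set $L = 2^{\epsilon_4 n}$ and $k = \lceil L^{\eta} \rceil$. Conditioning on the first renewal $\tau^\ast \in [2^n, 2^n + L]$, the event $\{|\cR \cap [2^n, 2^n + L]| \ge k\}$ forces $k-1$ subsequent interarrivals $X_1, \ldots, X_{k-1}$ distributed as i.i.d.\ copies of $\mu$ to satisfy $X_1 + \cdots + X_{k-1} \le L$. In particular $\max_i X_i \le L$, so
\begin{equation*}
\PP\bigl(X_1 + \cdots + X_{k-1} \le L\bigr)
\le \bigl(1 - \mu(L, \infty)\bigr)^{k-1}
\le \bigl(1 - L^{-(1-\epsilon_3)}\bigr)^{k-1}
\le \exp\bigl(-\tfrac{1}{2}\, L^{\eta + \epsilon_3 - 1}\bigr)
\end{equation*}
for $n$ large, the second inequality being the lower tail bound in hypothesis~C) applied at $t = L$.

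I would then choose $\eta \in (1-\epsilon_3, 1)$, so that $\eta + \epsilon_3 - 1 > 0$ and the right-hand side decays doubly-exponentially in $n$, i.e.\ like $\exp(-c \cdot 2^{\epsilon_4(\eta+\epsilon_3-1) n})$. This easily absorbs the polynomial factor $2^{\beta n d}$ for any fixed $\beta > 0$, yielding geometric convergence of $\sum_n \PP(V_n)$, and Borel--Cantelli gives $\PP(\varlimsup V_n) = 0$.

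The main technical point, which is precisely what [FMMV, Lemma~3] is designed to handle, is the reduction to a plain i.i.d.\ sum uniformly in the starting overshoot: a priori, the position of the first renewal in $[2^n, 2^n + L]$ depends on the full history of $\cR$ prior to time $2^n$, but the subsequent interarrivals are genuinely independent copies of $\mu$ regardless. A minor subtlety is that the lower tail bound in~C) is valid only for $t \ge M_3$, which is automatic since $L = 2^{\epsilon_4 n} \to \infty$. Note that only the lower tail bound in C) (not the upper one, nor hypothesis~\eqref{eq:gap_t_epsilon}) is used here; morally, a heavy tail forces the occasional large gap, which makes it hard for many renewals to cluster in a short window.
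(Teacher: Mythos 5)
Your proposal is correct, and its skeleton (union bound over the $O(2^{d\beta n})$ edges of $\sB(2^{\beta n})$, a single-edge estimate, the choice $\eta\in(1-\epsilon_3,1)$, then Borel--Cantelli) is exactly the paper's; in its primary form, invoking~\cite[Lemma~3]{FMMV}, it coincides with the published argument, which likewise uses only the lower bound in~C). The one genuine difference is your self-contained alternative for the single-edge bound: instead of the large-deviation estimate of~\cite[Lemma~3]{FMMV}, which bounds $\PP(|\cR\cap I|\ge l^{1-\epsilon_3}\ln^2 l)\le 2e^{-\ln^2 l}$ (giving the paper's $2^{-c\epsilon_4^2 n^2}$ decay), you condition on the first renewal $\tau^*$ in the window, use the renewal property to get $k-1$ fresh i.i.d.\ interarrivals, and bound the event $\{X_1+\cdots+X_{k-1}\le L\}$ crudely by $\{\max_i X_i\le L\}$, yielding $\bigl(1-\mu(L,\infty)\bigr)^{k-1}\le \exp\bigl(-\tfrac12 L^{\eta+\epsilon_3-1}\bigr)$ once $\eta>1-\epsilon_3$. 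This is more elementary and avoids citing the external lemma, at the price of a cruder renewal-count threshold ($L^\eta$ rather than $L^{1-\epsilon_3}\ln^2 L$), which is irrelevant here since the lemma's statement only asks for the threshold $2^{n\epsilon_4\eta}$ with some $\eta<1$; in fact your bound is stretched-exponential in $2^n$ and so absorbs the $2^{d\beta n}$ entropy even more comfortably than the paper's. The reduction via the strong renewal property at the stopping time $\tau^*$, uniform in the pre-$2^n$ history, is stated correctly, so there is no gap.
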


\begin{proof}
Taking $I = [2^{n}, 2^{n}+2^{n \epsilon_4}]$ and denoting by $l = 2^{n
    \epsilon_4}$
its length, Lemma~3 of~\cite{FMMV} shows that
\begin{equation*}
\PP(|\cR \cap I| \ge l^{1-\epsilon_3} \ln^{2} l)
    \le 2 \cdot e^{- \ln^{2} l}
    \le 2^{- c \epsilon_4^2 n^{2}}
    \qquad \text{for large $n$ and some $c>0$},
\end{equation*}
where constant $\epsilon_3 > 0$ satisfies
$\mu(t,\infty) \ge t^{-(1-\epsilon_3)}$ for large $t$
(the proof of Lemma~3 of~\cite{FMMV} only uses the lower
bound of condition C)\ ). Taking $\eta \in (1-\epsilon_3, 1)$ we have
that
\begin{equation*}
l^{1-\epsilon_3} \ln^{2} l
    = 2^{(1-\epsilon_3)\epsilon_4 n} \ln^{2} 2^{\epsilon_4 n}
    \ll 2^{\eta \epsilon_4 n}
    \ll 2^{\epsilon_4 n}
\end{equation*}
for large $n$. The union bound implies
\begin{equation*}
\PP(V_n)
    \le K(d) 2^{d \beta n}
        \PP(|\cR \cap I| \ge 2^{n \epsilon_4 \eta})
    \le K(d) 2^{d \beta n} 2^{- c \epsilon_4^{2} n^{2}},
\end{equation*}
for some constant $K(d)>0$. Then, $\sum_n \PP(V_n)$ is summable
and the result follows.
\end{proof}

The last event we consider is the only one related to cures. Notice that on
event $U_n^{\comp} \cap V_n^{\comp}$ the region
$\sB(2^{\beta n}) \times [2^{n}, 2^{n}+2^{n \epsilon_4}]$ only has clusters
of transmissions with at most $m$ edges and each of these edges do not have
many transmissions. Hence, for each cluster $M$ we can find an interval
$I_M \subset [2^{n}, 2^{n}+2^{\epsilon_4 n}]$ with length at least
$2^{\epsilon_4 n}/(m2^{\eta \epsilon_4 n}) = 2^{(1-\eta) \epsilon_4 n}/m$
satisfying that $\cR_e \cap I_M = \emptyset$ for every edge of $M$.

Let us denote by $\cH_x$ the renewal process with interarrival $\nu_{\delta}$
that is associated to site $x \in \ZZ^{d}$.

\begin{lema}
\label{lema:defi_Wn}
Let $\beta, \epsilon_4 > 0$ and $\eta \in (0,1)$. Consider the event
$W_n = W_n(\beta, \epsilon_4, \eta)$ defined by
\begin{equation}
\label{eq:defi_Wn}
W_n
    := \bigcup_{x\in \sB(2^{\beta n})} \bigcup_{I}
        \{\cH_x \cap I = \emptyset\},
\end{equation}
where the second union is over all intervals
$I \subset [2^{n}, 2^{n}+2^{n \epsilon_4}]$ of length
$|I| = 2^{(1-\eta) \epsilon_4 n}/m$. If $\nu$
has finite moments of all orders then it holds
$\PP(\varlimsup_n W_n) = 0$.
\end{lema}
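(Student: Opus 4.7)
The plan is to show $\sum_n \PP(W_n) < \infty$ and invoke the first Borel--Cantelli lemma. The quantitative control on gaps is supplied by Lemma~\ref{lema:unif_estimates}(i): since $\nu$ has moments of all orders, so does $\nu_\delta$ (which is just the law of $X/\delta$ for $X \sim \nu$), and taking $f(x) = x^{k-1}$ for an integer $k \ge 1$ satisfies $\int x f(x)\,\nu_\delta(\mathrm{d}x) < \infty$. This yields
\begin{equation*}
\sup_{t \ge 0} \PP\bigl(\cH_x \cap [t, t+h] = \emptyset\bigr) \le \frac{C_k}{h^{k-1}}
\end{equation*}
for all sufficiently large $h$ and a constant $C_k = C_k(\nu, \delta)$. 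The key point, which justifies the "all moments" assumption in the statement, is that we are free to pick $k$ as large as needed to absorb the spatial entropy.

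The next step is to dispose of the uncountable inner union over $I$ by a standard discretization. I would partition $[2^n, 2^n + 2^{n\epsilon_4}]$ into disjoint consecutive sub-intervals $J_1^{(n)},\ldots, J_{N_n}^{(n)}$ of length $h_n/2$, where $h_n := 2^{(1-\eta)\epsilon_4 n}/m$; note $N_n \le 2m \cdot 2^{\eta\epsilon_4 n}$. Every candidate $I$ of length $h_n$ contained in $[2^n, 2^n + 2^{n\epsilon_4}]$ contains some $J_i^{(n)}$ entirely, so
\begin{equation*}
W_n \subset \bigcup_{x \in \sB(2^{\beta n})} \bigcup_{i=1}^{N_n}\bigl\{\cH_x \cap J_i^{(n)} = \emptyset\bigr\}.
\end{equation*}

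A union bound, together with the moment estimate above and $|\sB(2^{\beta n})| \le c(d) 2^{d\beta n}$, gives for some constant $C = C(d, m, k, \nu, \delta)$
\begin{equation*}
\PP(W_n) \le C \cdot 2^{[d\beta + \eta\epsilon_4 - (1-\eta)\epsilon_4 (k-1)]\, n}.
\end{equation*}
Choosing $k$ so large that $(1-\eta)\epsilon_4 (k-1) > d\beta + \eta\epsilon_4$ makes the bracketed exponent strictly negative, yielding geometric summability and hence the claim via Borel--Cantelli. I do not foresee any serious obstacle; the only step requiring care is the discretization, which converts the uncountable family of candidate intervals into a family of cardinality polynomial in $2^n$ that can be handled by a union bound. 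The role of the "all moments" hypothesis is precisely to furnish polynomial tail decay of arbitrarily high order, needed to overcome the spatial entropy factor $2^{d\beta n}$ for any fixed $\beta$.
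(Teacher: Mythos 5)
Your proposal is correct and follows essentially the same route as the paper: discretize the candidate intervals into half-length subintervals so that any interval of length $2^{(1-\eta)\epsilon_4 n}/m$ contains one of them, apply Lemma~\ref{lema:unif_estimates}(i) with a polynomial $f$ of sufficiently high degree (possible since $\nu$, hence $\nu_\delta$, has all moments), and conclude by a union bound over sites and subintervals followed by Borel--Cantelli. The exponent bookkeeping matches the paper's choice $f(x)=x^a$ with $a(1-\eta)\epsilon_4 > d\beta + \eta\epsilon_4$, so there is nothing to add.
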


\begin{proof}
Let $t_j = 2^{n} + j \cdot 2^{(1-\eta) \epsilon_4 n}/(2m)$ and notice that
intervals $I_j = [t_j, t_{j+1}]$ for
$0 \le j \le \lceil 2m \cdot 2^{\eta \epsilon_4 n} \rceil$
cover $[2^{n}, 2^{n}+2^{n \epsilon_4}]$. If we have an interval $I$ of length
$|I| = 2^{(1-\eta) \epsilon_4 n}/m$ that has no cure marks for every
$x \in \sB(2^{\beta n})$, then this interval must contain some $I_j$. Hence,
\begin{equation*}
\PP(W_n)
    \le K(d) 2^{d \beta n}
        \sum_{j=0}^{\lceil 2m \cdot 2^{\eta \epsilon_4 n} \rceil}
        \PP(\cH \cap I_j = \emptyset).
\end{equation*}
By~\eqref{eq:moment_condition} in Lemma~\ref{lema:unif_estimates},
we can translate moments of $\nu_{\delta}$ into estimates for
$\PP(\cH \cap I_j = \emptyset)$. Consider the function
\begin{equation*}
f(x) := x^{a}, \qquad
\text{with $a > \frac{d \beta + \eta \epsilon_4}{(1-\eta)\epsilon_4}$}.
\end{equation*}
Since $E_{\nu_{\delta}}[Xf(X)] = E_{\nu}[(X/\delta)^{1+a}] < \infty$ and every
$I_j$ has length $2^{(1-\eta) \epsilon_4 n}/(2m)$, it follows that
\begin{equation*}
\PP(W_n)
    \le c(d, m, a, \delta) \cdot
        2^{(d \beta + \eta \epsilon_4 - a (1-\eta) \epsilon_4)n}.
\end{equation*}
Our choice of $a$ makes the coeficient multiplying $n$ in the exponent
negative, and we conclude that $\sum_n \PP(W_n)$ converges.
\end{proof}

From the estimates above, we have
\begin{proof}[Proof of Theorem~\ref{teo:ercp_extinction}(ii)]
By Theorem~\ref{teo:growth_ercp}(ii), there is $\rho(\mu) < 1$ such that
$r_t \le t^{\rho}$ for every large $t$. Fix $\beta > \rho$ and consider the bad
events $U_n(m, \beta), V_n(\epsilon_4, \beta, \eta),
W_n(\epsilon_4, \beta, \eta)$ described in Lemmas~\ref{lema:defi_Un},
\ref{lema:defi_Vn} and \ref{lema:defi_Wn} with $m$ and $\eta$ chosen so that
all the bounds in these lemmas hold.

Since $\beta > \rho$, for large $n$ we have that
$\sB(2^{\beta n}) \times [2^{n}, 2^{n}+2^{\epsilon_4 n}]$ will contain all
infected sites of time interval $[2^{n}, 2^{n}+2^{\epsilon_4 n}]$. Moreover,
on $U_n^{\comp} \cap V_n^{\comp} \cap W_n^{\comp}$ every cluster $M$
of transmissions cannot have more than $m$ edges and must have an interval
$I_M$ of length at least $2^{(1-\eta) \epsilon_4 n}/m$ without transmissions.
Since we also have that $I_M$ must have a point of $\cH_x$ for every
$x \in \sB(2^{\beta n})$, the result follows.
\end{proof}

\subsection{Phase transition}
\label{sub:phase_transition}

Lemma~\ref{lema:unif_estimates} is in the core of the proof of
Theorem~\ref{teo:ercp_extinction}(ii) and (iii).

\begin{proof}[Proof of Theorem~\ref{teo:ercp_extinction}(ii)]
We make a straightforward comparison with planar oriented percolation.
It is sufficient to prove the statement for $d=2$. We can actually prove there
is a positive probability of survival in the quadrant $\ZZ^2_+ \times \RR_+$.
In $\ZZ^{2}_+$, consider the graph with oriented edges $z+(1,0)$ and $z+(0,1)$.
Given any $\varepsilon>0$, notice that by Lemma~\ref{lema:unif_estimates}
we can find $h$ such that
\begin{equation*}
    \PP(\cR^{\mu} \cap [t, t+h] = \emptyset) \le \varepsilon,
    \qquad \text{for any $t \ge 0$}.
\end{equation*}
For each site $x \in \ZZ^{2}_+$ we associate a point $g(x) \in \ZZ^{2}_+ \times
\RR_+$ defined by $g(x) := (x, h \norm{x}_1)$, the only point in the
intersection of the vertical line from $(x,0)$ with the plane through
$(1, 0, h)$, $(0,1,h)$ and the origin.
We denote by $G$ the graph with vertex set  $\{g(x);\; x \in \ZZ^{2}_+\}$
and oriented edges from $g(x)$ to $g(y)$ if and only if there is one
from $x$ to $y$ in the oriented graph $\ZZ^{2}_+$. Clearly, $G$ is
isomorphic to $\ZZ^{2}_+$ as shown in Figure~\ref{fig:embed_ZZ2+}.
Define a site bond percolation model in $G$ by stating that
\begin{itemize}
\item A site $g(x)$ is open if and only if
    $\{x\} \times [h(\norm{x}_1 - 1), h(\norm{x}_1 + 1)]$ has no cure marks.
\item An edge from $g(x)$ to $g(y)$ is open if and only if
    $\cR_{x,y} \cap [h(\norm{x}_1 - 1), h\norm{x}_1] \neq \emptyset$.
\end{itemize}

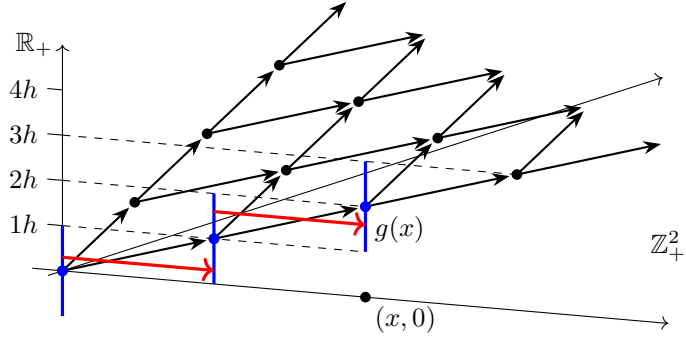
\begin{figure}[h]
\centering
\begin{tikzpicture}[scale=1,
    x={( 18:1cm)}, y={(-5:2cm)}, z={(0cm,.6cm)},
    dot/.style={fill, minimum size=4pt, outer sep=0pt,
                    inner sep=0pt, circle}
    ]
    \draw[->] (0,0,-.2) -- (0,0,5) node[left] {$\RR_+$};
    \draw[->] (0,-.2,0) -- (0,4,0) node[above=.7cm] {$\ZZ^{2}_+$};
    \draw[->] (-.2,0,0) -- (8.3,0,0);

\foreach \z in {1, ..., 4}{
    \draw (0,0,\z) -- ++(0,-.1) node[left] {$\z h$};
};

\foreach \z in {0, ..., 3}{
    \foreach \x in {0, ...,\z}{
        \draw[thick, -{Stealth}, shorten >=1mm] (\x, \z-\x, \z) -- ++(1,0,1);
        \draw[thick, -{Stealth}, shorten >=1mm] (\x, \z-\x, \z) -- ++(0,1,1);
        \node[dot] at (\x, \z-\x, \z) {};
    };
};

\node[dot, blue] at (0,0,0) {};
\node[dot, blue] at (0,1,1) {};
\node[dot] (x) at (0,2,0) {}
    node[below right] at (x) {$(x,0)$};
\node[dot, blue] (gx) at (0,2,2) {}
    node[below right] at (gx) {$g(x)$};
\draw[dashed] (0,0,1) -- (0,2,1) (0,0,2) -- (0,2,2) (0,0,3) -- (0,3,3);
\draw[very thick,blue] (0,0,-1) -- (0,0,1) (0,1,0) -- (0,1,2) (0,2,1) -- (0,2,3);
\draw[very thick, red, ->] (0,0,.3) -- (0,1,.3);
\draw[very thick, red, ->] (0,1,1.6) -- (0,2,1.6);

\end{tikzpicture}
\caption{Coupling with oriented site bond percolation when $d \ge 2$. A path of
    open sites and bonds connecting $0$ to $g(x)$ in $G$ implies the infection
    reaches $g(x)$ in the original model.}
\label{fig:embed_ZZ2+}
\end{figure}

Notice that the state of every site and bond is independent and the
probability of an edge being open is at least $1-\varepsilon$. Now, we use
Lemma~\ref{lema:unif_estimates}(ii) to obtain a similar estimate for the
probability of a site being open. There is $w_0(\varepsilon) > 0$ such that it
holds
\begin{equation*}
\inf_{t\ge 0} \PP( \cR^{\nu} \cap [t, t + w] = \emptyset)
    \ge 1 - \varepsilon,
\qquad \text{for any $w \in (0, w_0)$}.
\end{equation*}
Notice that $\PP( \cR^{\nu} \cap [t, t + w] = \emptyset)
    = \PP( \cR^{\nu_{\delta}} \cap
    [\frac{1}{\delta}t, \frac{1}{\delta}t + \frac{1}{\delta}w]
    = \emptyset)$ for every $t \ge 0$.
Taking $\delta_0 := \frac{w_0}{2h}$, it follows that for $\delta \in (0,
\delta_0)$ the probability of a site being open is at least $1-\varepsilon$.
This independent site-bond model can be compared with a finite range dependent
bond model: say that an edge $e = (x,y)$ is open if $e, x$ and $y$ are all
open in $G$ leads to a bond model in which edges that do not share extremities
are independent. By the classical stochastic domination results of Liggett,
Schonmann and Stacey~\cite{LSS}, if we choose $\varepsilon>0$ small enough
in the beginning it follows that ERCP($\mu, \nu_{\delta}$) with
$\delta \in (0, \delta_0)$ survives with positive probability.
\end{proof}

\begin{proof}[Proof of Theorem~\ref{teo:ercp_extinction}(iii)]
Here we use the recurrence inequality approach. The initial part of the
argument is essentially the same as in the proof of~\cite[Theorem~1.1]{FMUV},
with the only difference that now transmissions and cures are given
by renewal processes with interarrival distributions $\mu$ and $\nu_{\delta}$,
respectively. Analogous to~\cite[Definition~2.2]{FMUV}, consider the
uniform quantities
\begin{equation}
\label{eq:uniform_quantities}
\tilde{s}_n
    := \sup \hP(\tS_j((x,t) + B_n))
\quad \text{and} \quad
\tilde{t}_n
    := \sup \hP(\tT((x,t) + B_n)),
\end{equation}
where the suprema above are over all $(x,t) \in \ZZ^{d} \times \RR_+$ and all
product renewal probability measures $\hP$ with interarrival distributions
$\mu$ and $\nu_{\delta}$ and renewal points starting at (possibly different)
time points strictly less than zero. Define
\begin{equation}
\label{eq:defi_un_ercp}
    u_n := \tilde{s}_n + \tilde{t}_n.
\end{equation}

Most of the reasoning in the proof of~\cite[Theorem~1.1]{FMUV} still holds,
up to the choice of box sequence. More precisely, consider boxes
$B_n = [0, 2^{n}]^{d} \times [0, h_n]$. Using~\eqref{eq:moment_condition} in
Lemma~\ref{lema:unif_estimates}(i) we estimate the probability of decoupling
both transmissions and cures. For some fixed increasing function $f$ to be
precised later, we estimate the probability that some site or edge in
$[0,2^{n}]^{d}$ does not have a mark in the interval $[t, t+h]$ by
\begin{equation*}
\sup_{t \ge 0}
    \hP\Bigl(
        \bigcup_e \{\cR^{\mu}_{e} \cap [t, t+h] = \emptyset\} \cup
        \bigcup_x \{\cR^{\nu_{\delta}}_{x} \cap [t, t+h] = \emptyset\}
        \Bigr)
    \le \frac{d2^{dn}C(f, \mu)}{f(h)} + \frac{2^{dn}C(f,\nu_{\delta})}{f(h)}.
\end{equation*}
More than that, the upper bound above can be taken uniform for $\delta > 1$
since
\begin{equation}
\label{eq:decoup_nu}
\PP(\cR^{\nu_{\delta}}_{x} \cap [t, t+h] = \emptyset)
    = \PP(\cR^{\nu}_{x} \cap [\delta t, \delta t+ \delta h] = \emptyset)
    \le \frac{C(f, \nu)}{f(\delta h)}
    \le \frac{C(f, \nu)}{f(h)}.
\end{equation}
Hence, the same line of reasoning in the proof of~\cite[Theorem~1.1]{FMUV}
shows there are constants $c(d)$ and $C(d, \mu, \nu, f)$ such that
\begin{equation}
\label{eq:recurrence_un}
u_{n}
    \le c \cdot (h_n/h_{n-1})^{2} \cdot u_{n-1}^{2} +
        \frac{C 2^{dn}}{f(h_{n-1})}.
\end{equation}
Since $\mu$ and $\nu$ satisfy~\eqref{eq:moment_condition_rcp3}, if we choose
sequence $h_n = e^{(\alpha/\theta)^{2}n^{2}}$ with an appropriate choice of
$\alpha$ like in the proof of~\cite[Lemma~2.7]{FMUV}, it follows that there
is $n_0(\mu, \nu, \theta, d)$ such that if $u_{n_0} \le 2^{-d n_0}$ then $u_n
\le 2^{-dn}$ for every $n \ge n_0$. To finish the proof, we only need to
choose $\delta > 1$ sufficiently high so that $u_{n_0} \le 2^{-d n_0}$.

Recall that $n_0(d, \mu, \nu, \theta)$ is fixed, and so are the dimensions
$l_{n_0}$ and $h_{n_0}$ of box $B_{n_0}$. To control the
probability of $\hP(\tS_1((x,s) + B_{n_0}))$ and $\hP(\tT((x,s) + B_{n_0}))$
uniformly in $(x,s)$ and $\hP$ we observe the following.
Firstly, we control the probability of some edge in $\pi(B_{n_0})$ having
too many renewal marks. Let $N$ denote the number of edges in
$\pi(B_{n_{0}})$. For a single edge, we can find $k_0(n_0, \mu)$
sufficiently large so that
\begin{equation*}
\PP(\# \cR_e \cap [s,s+h_{n_0}] \ge k_0) \le \frac{1}{4N} 2^{- d n_0},
\end{equation*}
implying that
\begin{equation}
\label{eq:many_transmissions}
\PP\Bigl(
    \bigcup_{e \in \pi(B_{n_0})} \{\# \cR_e \cap [s,s+h_{n_0}] \ge k_0 \}
\Bigr)
    \le N \cdot \PP(\# \cR \cap [s,s+h_{n_0}] \ge k_0)
    \le \frac{1}{4} 2^{- dn_{0}}.
\end{equation}
Estimate~\eqref{eq:many_transmissions} controls the probability that some
edge has more than $k_0$ renewals. When every edge of $\pi(B_{n_0})$ has less
than $k_0$ renewals, we show that after every transmission in $B_{n_0}$ there
is a high probability that every site gets cured before the next transmission
if $\delta$ is sufficiently large.

Let $Z_t$ denote the overshoot at time $t$, i.e.,
$Z_t := \inf \cR \cap [t, \infty) - t$. We denote by $Z^{\mu, e}_t$ and
$Z^{\nu_{\delta}, x}_t$ the overshoots of the renewal processes of edge $e$
and site $x$. By Lemma~\ref{lema:unif_estimates}(ii), given
$\varepsilon > 0$ there is $w_0(\varepsilon, \mu) > 0$ such that
\begin{equation}
\label{eq:min_Zet}
\inf_{t \ge 0} \hP(\min_{e \in \pi(B_{n_0})} Z_t^{\mu,e} > w_0)
    \ge (1-\varepsilon)^{N}.
\end{equation}
On the other hand, denoting by $M$ the number of sites in $\pi(B_{n_0})$
we have that
\begin{align*}
\inf_{t \ge 0} \hP(\max_{x \in \pi(B_{n_0})} Z_t^{\nu_{\delta},x} \le w_0)
    &= \bigl(1 - \sup_{t \ge 0} \hP(Z_t^{\nu_{\delta},x} > w_0)\bigr)^{M}
    \ge \bigl(1 - \sup_{t \ge 0}
        \PP(\cR^{\nu_{\delta}} \cap [t, t+w_0] = \emptyset)
        \bigr)^{M}\\
    &\ge \Bigl(1 - \frac{C(\nu, f)}{f(\delta w_0)}\Bigr)^{M},
\end{align*}
where the last inequality is a consequence of~\eqref{eq:decoup_nu}.
Hence, we can choose $\delta_0(\nu, w_0, f, \varepsilon) > 0$ so that for
$\delta \ge \delta_0$ we have
\begin{equation}
\label{eq:max_Zxt}
\inf_{t \ge 0} \hP(\max_{x \in \pi(B_{n_0})} Z_t^{\nu_{\delta},x} \le w_0)
    \ge (1-\varepsilon)^{M}.
\end{equation}
Combining~\eqref{eq:min_Zet} and~\eqref{eq:max_Zxt} we can write that
\begin{equation}
\label{eq:max_Zxt_before_min_Zet}
\inf_{t \ge 0} \hP(\min_{e} Z_t^{\mu,e} > \max_{x} Z_t^{\nu_{\delta},x})
    \ge \inf_{t \ge 0}
        \hP(\min_{e} Z_t^{\mu,e} > w_0 \ge \max_{x} Z_t^{\nu_{\delta},x})
    \ge (1-\varepsilon)^{N+M}.
\end{equation}

Let us estimate $\hP(\tS_1((x,s) + B_{n_0}))$. Define $T_0 := s$ and
$T_{j+1} := T_j + \min_{e \in \pi(B_{n_0})} Z_{T_j}^{e}$. Then, $T_j$ is an
increasing sequence of stopping times for the filtration
$\cF_t$ that makes $\cR_e \cap [0,t]$ and $\cR_x \cap [0,t]$ measurable for
every edge $e$ and site $x$ in $\pi(B_{n_0})$.
By~\eqref{eq:many_transmissions}, we know that
\begin{equation*}
\hP(T_{Nk_0} \le s + h_{n_0})
    \le \frac{1}{4} 2^{- dn_{0}}.
\end{equation*}

Given $\cF_{T_j}$, notice that at $T_j$ there has been unique transmission in
$(x,s) + B_{n_0}$ and by~\eqref{eq:max_Zxt_before_min_Zet} all sites will cure
before the next transmission with probability
\begin{equation*}
\hP(
    \min_{e} Z_{T_{j}}^{\mu,e} > \max_{x} Z_{T_{j}}^{\nu_{\delta},x}
    \mid \cF_{T_j})
    \ge (1-\varepsilon)^{N+M},
\end{equation*}
implying that
\begin{equation*}
\hP\Bigl( \bigcap_{j=1}^{Nk_0}
    \{\min_{e} Z_{T_{j}}^{\mu,e} > \max_{x} Z_{T_{j}}^{\nu_{\delta},x}\}
    \Bigr)
    \ge (1-\varepsilon)^{N(N+M)k_0}.
\end{equation*}
Notice that on the event
\begin{equation*}
\{T_{Nk_0} > s + h_{n_0}\} \cap
\Bigl(\bigcap_{j=1}^{Nk_0}
    \{\min_{e} Z_{T_{j+1}}^{\mu,e} > \max_{x} Z_{T_{j+1}}^{\nu_{\delta},x}\}
\Bigr)
\end{equation*}
there are no spatial nor temporal crossings. Hence, it follows
\begin{equation}
\label{eq:unif_tS_estimate}
\hP(\tS_1((x,s) + B_{n_0}))
    \le \frac{1}{4} 2^{- dn_{0}} + \bigl[1 - (1-\varepsilon)^{N(N+M)k_0}\bigr]
    \le \frac{1}{2} 2^{- dn_{0}}
\end{equation}
if we choose $\varepsilon(n_0, k_0, d)>0$ sufficiently small. We emphasize that
the choice of parameters above is not circular: we can choose in order
$k_0(n_0, \mu)$, $\varepsilon(n_0, k_0, d)$, $w_0(\varepsilon, \mu)$ and then
$\delta_0(\nu, w_0, f, \varepsilon)$, so that~\eqref{eq:unif_tS_estimate} holds
for $\delta \ge \delta_0$, uniformly on $\hP$ and $(x,s)$. We can estimate
$\hP(\tT((x,s) + B_{n_0}))$ in a similar way, which leads to
$u_{n_0} \le 2^{- d n_0}$ and the result follows.
\end{proof}

\end{document}